\newcommand{\refl}[1][C]{{\mathsf R}_{#1}}
\newcommand{\ts}{\textstyle}
\newcommand{\sss}{\scriptscriptstyle}
\newcommand{\ges}{{\sss\geqslant}}
\newcommand{\ann}{\operatorname{Ann}}
\newcommand{\bwedge}{{\textstyle\bigwedge}}
\newcommand{\pic}[1]{\operatorname{Pic}(#1)}
\newcommand{\dpic}[1]{\operatorname{DPic}(#1)}
\newcommand{\ssd}{{\mathsf d}}
\newcommand{\sh}{{\mathsf h}}
\newcommand{\sA}{{\mathsf A}}
\newcommand{\fm}{{\mathfrak m}}
\newcommand{\fn}{{\mathfrak n}}
\newcommand{\fp}{{\mathfrak p}}
\newcommand{\fq}{{\mathfrak q}}
\newcommand{\amp}{\operatorname{amp}}
\newcommand{\col}{\colon}
\newcommand{\dd}{\partial}
\newcommand{\depth}{\operatorname{depth}}
\newcommand{\ED}[3]{{}^{#1}{\operatorname{d}}^{#2,#3}}
\newcommand{\EH}[3]{{}^{#1}\!\operatorname{E}_{#2,#3}}
\newcommand{\gdim}{\operatorname{G-dim}}
\newcommand{\fd}{\operatorname{fd}}
\newcommand{\rfd}[2]{\operatorname{Rfd}_{#1}{#2}}
\newcommand{\hh}[1]{\operatorname{H}(#1)}
\newcommand{\HH}[2]{\operatorname{H}_{#1}(#2)}
\newcommand{\hra}{\hookrightarrow}
\newcommand{\id}{\operatorname{id}}
\newcommand{\image}{\operatorname{Im}}
\newcommand{\lra}{\longrightarrow}
\newcommand{\Max}{\operatorname{Max}}
\newcommand{\ord}{\operatorname{ord}}
\newcommand{\pd}{\operatorname{pd}}
\newcommand{\rank}{\operatorname{rank}}
\newcommand{\Shift}{\mathsf{\Sigma}}
\newcommand{\Spec}{\operatorname{Spec}}
\newcommand{\Supp}{\operatorname{Supp}}
\newcommand{\Ext}[4]{\operatorname{Ext}^{#1}_{#2}(#3,#4){}}
\newcommand{\Hom}[3]{\operatorname{Hom}_{#1}(#2,#3)}
\newcommand{\Rhom}[3]{\operatorname{\mathsf{R}Hom}_{#1}(#2,#3)}
\newcommand{\dtensor}[1]{\otimes^{\mathsf{L}}_{#1}}
\newcommand{\Tor}[4]{\operatorname{Tor}_{#1}^{#2}(#3,#4){}}
\newcommand{\dcat}[1][S]{{\mathsf D}(#1)}
\newcommand{\dcatB}[1]{{\mathsf{D}_{\mathsf b}}(#1)}
\newcommand{\dcatb}[1]{{\mathsf{D}^{\mathsf f}_{\mathsf b}}(#1)}
\newcommand{\dcatN}[1]{{\mathsf{D}_{\!{\scriptscriptstyle\mathsf -}}} 
(#1)}
\newcommand{\dcatP}[1]{{\mathsf{D}_{\!{\scriptscriptstyle\mathsf +}}} 
(#1)}
\newcommand{\dcatn}[1]{{\mathsf{D}^{\mathsf f}_{\!{\scriptscriptstyle 
\mathsf -}}}(#1)}
\newcommand{\dcatp}[1]{{\mathsf{D}^{\mathsf f}_{\!{\scriptscriptstyle 
\mathsf +}}}(#1)}
\newcommand{\dcatfg}[1]{{\mathsf{D}^{\mathsf f}}(#1)}
\newcommand{\tra}{\twoheadrightarrow}
\newcommand{\vf}{{\varphi}}
\newcommand{\wt}{\widetilde}
\newcommand{\xra}{\xrightarrow}
\newcommand{\ZZ}{\operatorname{Z}}
\newcommand{\BN}{{\mathbb N}}
\newcommand{\BZ}{{\mathbb Z}}
\theoremstyle{plain}
\newtheorem{theorem}{Theorem}[section]
\newtheorem{proposition}[theorem]{Proposition}
\newtheorem{lemma}[theorem]{Lemma}
\newtheorem{corollary}[theorem]{Corollary}
\newtheorem{itheorem}{Theorem}
\newtheorem{subcorollary}{Corollary}[theorem]
\newtheorem{sublemma}[subcorollary]{Lemma}
\newtheorem{ssubtheorem}{Theorem}[subsection]
\newtheorem{ssubproposition}[ssubtheorem]{Proposition}
\newtheorem{ssubcorollary}[ssubtheorem]{Corollary}
\theoremstyle{definition}
\newtheorem{example}[theorem]{Example}
\newtheorem{ssubchunk}[ssubtheorem]{}
\newtheorem{chunk}[theorem]{}
\newtheorem{subchunk}[subcorollary]{}
\theoremstyle{remark}
\newtheorem{remark}[theorem]{Remark}
\newtheorem{notes}[theorem]{Notes}
\newtheorem{ssubnotes}[ssubtheorem]{Notes}
\newtheorem{ssubquestion}[ssubtheorem]{Question}
\numberwithin{equation}{theorem}
\newcommand\iso{{\mkern8mu\longrightarrow \mkern-25.5mu{}^\sim \mkern17mu}}
\newcommand\tiso{{\mkern6mu\rightarrow\mkern-21mu{\vphantom{h}}^\simeq \mkern9mu}}
\begin{document}

\title[Reflexivity and rigidity. I]{Reflexivity and rigidity for complexes\\ 
I. Commutative rings}

\author[L.\,L.\,Avramov]{Luchezar L.~Avramov} 
\address{Department of Mathematics, University of Nebraska, Lincoln, NE 68588, U.S.A.}
\email {avramov@math.unl.edu}

\author[S.\,B.\,Iyengar]{Srikanth B.~Iyengar} 
\address{Department of Mathematics,
University of Nebraska, Lincoln, NE 68588, U.S.A.}
\email{iyengar@math.unl.edu}

\author[J.\,Lipman]{Joseph Lipman} 
\address{Department of Mathematics, Purdue University, W. Lafayette, IN 47907, U.S.A.}
\email{jlipman@purdue.edu}

\thanks{Research partly supported by NSF grant DMS 0803082 (LLA),
NSF grant DMS 0602498 (SBI), and NSA grant H98230-06-1-0010 (JL)}

\keywords{Semidualizing complexes, perfect complexes, invertible complexes, rigid complexes, relative dualizing complexes, derived reflexivity, finite Gorenstein dimension}

\subjclass[2000]{Primary 13D05, 13D25. Secondary 13C15, 13D03}

\dedicatory{To our friend and colleague, Hans-Bj{\o}rn Foxby.}


 \begin{abstract}
A notion of rigidity with respect to an arbitrary semidualizing complex $C$ over 
a commutative noetherian ring  $R$ is introduced and studied.  One of the main 
results characterizes $C$-rigid complexes. Specialized to the case 
when $C$ is the relative dualizing complex of a homomorphism of rings of finite 
Gorenstein dimension, it leads to broad generalizations of theorems of Yekutieli 
and Zhang concerning rigid dualizing complexes, in the sense of Van den Bergh. Along the way, 
new results about derived reflexivity with respect to $C$ are established. 
Noteworthy is the statement that derived $C$-reflexivity is a local 
property; it implies that a finite $R$-module $M$ has finite $G$-dimension over 
$R$ if $M_{\fm}$ has finite $G$-dimension over $R_{\fm}$ for each maximal 
ideal $\fm$ of $R$.
 \end{abstract}

\maketitle

\setcounter{tocdepth}{2}
\tableofcontents

\section*{Introduction} 

Rigidification means, roughly, endowing a type of object with 
extra structure so as to eliminate nonidentity automorphisms.  
For example, a rigidification for dualizing sheaves on varieties over 
perfect fields plays an important role in \cite{Lp1.5}.  We will be concerned 
with rigidifying  complexes arising from Grothendieck duality theory, both 
in commutative algebra and in algebraic geometry.  This  paper is 
devoted to the algebraic situation; the geometric counterpart is treated 
in \cite{AIL}.

Let $R$ be a noetherian ring and $\dcat[R]$ its derived category. We write 
$\dcatb R$ for the full subcategory of \emph{homologically finite complexes,} 
that is to say, complexes $M$ for which the $R$-module $\hh M$ is finitely 
generated. Given complexes $M$ and $C$ in~$\dcatb R$ one says that $M$
is \emph{derived $C$-reflexive} if the canonical map 
 \[
\delta^C_M\col M\lra\Rhom R{\Rhom RMC}C
 \]
is an isomorphism and $\Rhom RMC$ is homologically finite.  When the 
ring $R$ has finite Krull dimension, the complex $C$~is said to be 
\emph{dualizing for} $R$ if $\delta^C_M$ is an isomorphism for \emph{all} 
homologically finite complexes $M$. In \cite[p.\,258, 2.1]{H} it is proved  
that when $C$ is isomorphic to some bounded complex of injective modules, $C$
is dualizing if and only if it is \emph{semidualizing,} meaning that the canonical map 
 \[
\chi^C\col R\lra\Rhom RCC
 \]
is an isomorphism.

Even when $\Spec R$ is connected, dualizing complexes for $R$ differ by shifts 
and the action of the Picard group of the ring \cite[p.\,266, 3.1]{H}. Such a lack of 
uniqueness has been a source of difficulties.  Building on work of Van den Bergh 
\cite{VdB} and extensively using differential graded algebras, in \cite{YZ1,YZ2} 
Yekutieli and Zhang have developed for algebras of finite type over a regular ring $K$
of finite Krull dimension a theory of \emph{rigid relative to $K$} dualizing complexes.  
The additional structure that they carry makes them unique up to unique rigid 
isomorphism.

Our approach to rigidity applies to any noetherian ring $R$ and takes place 
entirely within its derived category:  We say that $M$ is \emph{$C$-rigid} 
if there is an isomorphism 
 \[
\mu\col M\xra{\,\simeq\,}\Rhom R{\Rhom RMC}M\,,
 \]
called a $C$-rigidifying isomorphism for $M$.  In the context described in
the preceding paragraph we prove, using the main result of \cite{AILN},
that rigidity in the sense of Van den Bergh, Yekutieli, and Zhang coincides
with $C$-rigidity for a specific complex $C$.

The precise significance of $C$-rigidity is explained by the following result.  
It is abstracted from Theorem \ref{thm:uniquerigidity}, which requires no 
connectedness hypothesis.

   \begin{itheorem}
     \label{ithm:rigid}
If $C$ is a semidualizing complex, then $\Rhom R{\chi^C}C^{-1}$ is a 
$C$-rigidifying isomorphism.

When $\Spec R$ is connected and $M$ is non-zero and $C$-rigid, 
with $C$-rigidifying isomorphism $\mu$, there exists a unique isomorphism 
$\alpha\col C\iso M$ making the following diagram commute:
 \[
\xymatrixrowsep{2.5pc}
\xymatrixcolsep{6.7pc}
\xymatrix{ 
C\ar@{->}[r]^-{\Rhom R{\chi^C}C^{-1}}
       \ar@{->}[d]_{\alpha}
  &\, \Rhom R{\Rhom R{C}C}{C} \ar@{->}[d]^{\Rhom R{\Rhom R{\alpha}C}{\>\alpha}}
  \\
M\ar@{->}[r]_-{\mu} 
&\,\Rhom R{\Rhom RMC}M
}
\]
 \end{itheorem}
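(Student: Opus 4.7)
The plan is to prove the two assertions separately. For the first, my approach is direct: by hypothesis $\chi^C\colon R\to\Rhom RCC$ is an isomorphism in $\dcat[R]$, so applying $\Rhom R{-}C$ yields an isomorphism $\Rhom R{\chi^C}C\colon\Rhom R{\Rhom RCC}C\xra{\simeq}\Rhom RRC\simeq C$. Its inverse is a map of the form $C\to\Rhom R{\Rhom RCC}C$, exactly the shape of a $C$-rigidifying isomorphism with $M=C$.

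For the second assertion, my strategy is to first show that under the hypotheses any nonzero $C$-rigid $M$ is in fact isomorphic to $C$ in $\dcat[R]$, and then to pin down $\alpha$ using the rigidifying structure. To obtain $M\simeq C$, I would first extract from $\mu$ that $\Rhom RMC$ is homologically finite and $M$ is derived $C$-reflexive, then pass to stalks: at each prime $\fp$, the complex $M_\fp$ is $C_\fp$-rigid over $R_\fp$, and a local analysis should show rigidity forces $M_\fp\simeq C_\fp$. The heuristic is that a discrepancy $M_\fp\simeq\Shift^n C_\fp$ for $n\in\BZ$ would, under rigidification, produce $\Shift^{2n}C_\fp$, so comparison with the original forces $n=0$. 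Globally, the possibility $M\simeq\Shift^n L\dtensor RC$ for an invertible $L$ is ruled out analogously by the relation $L\simeq L^{\otimes 2}$, and connectedness of $\Spec R$ is what allows one to fix the shift uniformly and to extract $M\simeq C$ from the local data.

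Once $M\simeq C$ is established, I would choose any isomorphism $\beta\colon C\iso M$ and form the transported rigidifying isomorphism $\mu_\beta:=\Rhom R{\Rhom R\beta C}{\beta}\circ\mu_C\circ\beta^{-1}$, where $\mu_C$ denotes the map from Part 1. Both $\mu$ and $\mu_\beta$ are generators of the free rank-one $R$-module $\Hom{\dcat[R]}{M}{\Rhom R{\Rhom RMC}M}$, so their ratio is a unit $v\in R^\times$, and the rescaling $\alpha:=\beta v$ makes the diagram commute by a direct computation. For uniqueness, any candidate $\alpha'=\alpha u$ with $u\in R^\times$ also satisfying the condition yields $\mu\alpha'=u\,(\mu\alpha)$ on the left, while $\Rhom R{\Rhom R{\alpha'}C}{\alpha'}\circ\mu_C=u^2\,(\Rhom R{\Rhom R\alpha C}{\alpha}\circ\mu_C)$ on the right, forcing $u^2=u$ in $R$; since $u$ is a unit this gives $u=1$.

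The main obstacle will be the local-to-global passage in the second assertion: showing rigorously that nonzero $C$-rigidity over a connected $\Spec R$ forces $M\simeq C$ in $\dcat[R]$. This is precisely where the connectedness hypothesis enters essentially, and it constitutes the technical heart of Theorem~\ref{thm:uniquerigidity} from which the present statement is abstracted.
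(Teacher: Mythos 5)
Your first part and your final existence/uniqueness bookkeeping are correct and essentially identical to the paper's (Example~\ref{exa:rigidity} and the proof of Theorem~\ref{thm:uniquerigidity}: transport the canonical rigidifying isomorphism along a chosen $\beta$, compare with $\mu$ through $\Hom{\dcat[R]}CC\cong R$ to get a unit $v$, rescale, and use the linear-versus-quadratic comparison $u=u^2$ for uniqueness). The problem is the middle step, which you yourself flag as the ``main obstacle'': that a non-zero $C$-rigid $M$ over connected $\Spec R$ satisfies $M\simeq C$. The heuristic you offer for it is circular. Arguing that a discrepancy $M_\fp\simeq\Shift^nC_\fp$ would be doubled to $\Shift^{2n}C_\fp$, or that $M\simeq\Shift^nL\dtensor_RC$ forces $L\simeq L\dtensor_RL$, presupposes that $M$ is already a shift of a Picard twist of $C$ --- which is exactly what must be proved. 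A rigidifying isomorphism $\mu\col M\simeq\Rhom R{\Rhom RMC}M$ a priori expresses no relation between $M$ and $C$; in particular it does not by itself ``extract'' that $\Rhom RMC$ is bounded below or that $M$ is derived $C$-reflexive (note that $\mu$ involves $\Rhom R{\Rhom RMC}M$, not $\Rhom R{\Rhom RMC}C$), and no argument is indicated for either claim.

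The paper fills this gap with Theorem~\ref{thm:invertible}: setting $L=\Rhom RMC$, the isomorphism $M\simeq\Rhom RLM$ yields, by the Bass series formula of Lemma~\ref{lem:bass} at each $\fm\in\Max R\cap\Supp_RM$, the identity $I_{R_\fm}^{M_\fm}(t)=P^{R_\fm}_{L_\fm}(t)\cdot I_{R_\fm}^{M_\fm}(t)$, whence $P^{R_\fm}_{L_\fm}(t)=1$ and $L_\fm\simeq R_\fm$. It follows that $\HH0L$ is projective of rank one on $\Supp_RM=\Supp_RL$, that this support is a union of connected components of $\Spec R$, and hence (by connectedness and $M\ne0$) that $L$ is an invertible complex supported everywhere. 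Then Lemma~\ref{refl:operations} and Theorem~\ref{thm:bounded-reflexive} give that $M$ is derived $C$-reflexive, so $\Rhom RLC\simeq M\simeq\Rhom RLM$, and cancelling the invertible $L$ (\ref{picard}) gives $M\simeq C$. Some such computation --- identifying $\Rhom RMC$ as invertible before you know anything else about $M$ --- is unavoidable, and without it your outline does not constitute a proof.
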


Semidualizing complexes, identified by Foxby \cite{Fx0} and Golod \cite{Gol} 
in the case of modules, have received considerable attention in \cite{AF:qG} 
and in the work of Christensen, Frankild, Sather-Wagstaff, and Taylor 
\cite{Ch2, FS, FST}.  However, to achieve our goals we need to go further 
back and rethink basic propositions concerning derived reflexivity.  This is 
the content of Sections \ref{Depth} through \ref{Duality}, from where we 
highlight some results.  

   \begin{itheorem}
     \label{ithm:reflexive}
When $C$ is semidualizing, $M$ is derived $C$-reflexive if $(\mkern-1mu$and only if\/$)$ 
there exists some isomorphism $M\simeq\Rhom R{\Rhom RMC}C$ in $\dcat[R],$ 
if $(\mkern-1mu$and only if\/$)$ $M_{\fm}$ is derived $C_{\fm}$-reflexive for each maximal 
ideal $\fm$ of $R$.
 \end{itheorem}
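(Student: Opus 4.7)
My plan is to prove the chain of equivalences by treating $(1)\Rightarrow(2)$ as immediate from the definition, $(1)\Leftrightarrow(3)$ as a local-global principle, and $(2)\Rightarrow(1)$ via a retraction argument plus this principle.

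For $(1)\Leftrightarrow(3)$, the forward direction follows from the compatibility of all the relevant maps with localization: when $M\in\dcatb R$ and $C$ is semidualizing, one has $(\Rhom RMC)_\fm\simeq\Rhom{R_\fm}{M_\fm}{C_\fm}$, the biduality map localizes as $(\delta^C_M)_\fm=\delta^{C_\fm}_{M_\fm}$, and $C_\fm$ is $R_\fm$-semidualizing. Both conditions in the definition of derived reflexivity then pass to stalks. Conversely, to deduce $(1)$ from $(3)$, the fact that being an isomorphism in $\dcat[R]$ is local on $\Max R$ gives the iso claim for $\delta^C_M$; the more delicate point is the homological finiteness of $\Rhom RMC$, since finite generation of modules is not in general a local property. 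I anticipate this step to invoke a structural statement, proved in an earlier section of the paper, that exploits the semidualizing hypothesis beyond mere localization compatibility.

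For $(2)\Rightarrow(1)$, the key input is the triangle identity $\Rhom R{\delta^C_N}C\circ\delta^C_{\Rhom RNC}=\id_{\Rhom RNC}$, which makes $\delta^C_{\Rhom RNC}$ a split monomorphism for every $N\in\dcatb R$. Given an abstract iso $\alpha\colon M\iso\Rhom R{\Rhom RMC}C$, applying $\Rhom R{-}C$ to $\alpha$ produces an iso $\Rhom R{\Rhom R{\Rhom RMC}C}C\simeq\Rhom RMC$; combined with the split mono $\delta^C_{\Rhom RMC}$, this exhibits $\Rhom RMC$ as a split summand of a complex isomorphic to itself. Localizing at each maximal ideal and using Krull-Schmidt-type uniqueness in $\dcatb{R_\fm}$ forces the split mono to be an iso, and an analogous argument for $M$ itself, together with the already-established $(1)\Leftrightarrow(3)$, shows $\delta^C_M$ is an iso globally and $\Rhom RMC$ is homologically finite.

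The main obstacle I anticipate is the local-to-global step for homological finiteness of $\Rhom RMC$ in $(3)\Rightarrow(1)$; it requires a genuine use of the semidualizing structure, since finite generation is not a local property, and likely rests on earlier results in the paper about reflexivity and localization.
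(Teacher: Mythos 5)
Your outline reproduces the paper's architecture in broad strokes: the retraction identity $\Rhom R{\delta^C_M}C\circ\delta^C_{\Rhom RMC}=\id$ is exactly \ref{triality}, and using it together with an abstract isomorphism $M\simeq\Rhom R{\Rhom RMC}C$ to force first $\delta^C_{\Rhom RMC}$ and then $\delta^C_M$ to be isomorphisms is Proposition \ref{prop:foxby}. Two repairs are needed even there. First, the split monomorphism is promoted to an isomorphism degreewise on homology, using that a finite module over a noetherian ring cannot be isomorphic to a proper direct summand of itself; no Krull--Schmidt property of $\dcatb{R_\fm}$ is available or required. Second, ``an analogous argument for $M$ itself'' does not exist: $\delta^C_M$ is not a priori split, and one must instead use the commutative square identifying $\delta^C_{\sh^2(M)}\circ\mu$ with $\sh^2(\mu)\circ\delta^C_M$ to transfer the isomorphism property from $\sh^2(M)$ back to $M$.

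The genuine gap is the step you flag and then defer: showing that $\Rhom RMC$ is homologically bounded below. This is precisely what makes the theorem nontrivial (the introduction says it ``delivers derived $C$-reflexivity bypassing a delicate step, the verification that $\Rhom RMC$ is homologically finite''), and no amount of localization formalism supplies it --- indeed, the comparison map $(\Rhom R{\Rhom RMC}C)_\fm\to\Rhom{R_\fm}{\Rhom{R_\fm}{M_\fm}{C_\fm}}{C_\fm}$ that you need in order to localize $\delta^C_M$ is only available once $\Rhom RMC$ is known to lie in $\dcatp R$, and boundedness at every $\fm$ separately does not bound $\inf\hh{\Rhom RMC}$ globally. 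The paper closes this gap with Theorem \ref{semid:dualfinite}: if $C$ is semidualizing and $L\in\dcatn R$ has $\Rhom RLC\in\dcatb R$, then $\inf\hh L\ge\inf\hh C-\rfd R{\Rhom RLC}>-\infty$. Its proof rests on the Bass-series identity of Lemma \ref{lem:bass}, which for semidualizing $C$ gives $\inf\hh{L_\fm}=\inf\hh{C_\fm}-\depth R_\fm+\depth_{R_\fm}(\Rhom RLC)_\fm$, together with Theorem \ref{thm:rfd}, the finiteness of the Chouinard--Foxby invariant $\rfd R{(-)}$ on all of $\dcatb R$ --- itself a substantive result proved by adapting Gabber's argument, and the only place where the absence of a Krull-dimension hypothesis is paid for. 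Without this uniform lower bound, both the implication from the abstract isomorphism and the local-to-global implication remain unproved; your proposal is an accurate map of the proof's shape, but its central quantitative ingredient is missing.
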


This is part of Theorem \ref{thm:bounded-reflexive}. One reason for its significance 
is that it delivers derived $C$-reflexivity bypassing a delicate step, the verification 
that $\Rhom RMC$ is homologically finite.  Another is that it establishes that derived 
$C$-reflexivity is a local property.  This implies, in particular, that a finite $R$-module 
$M$ has finite \text{G-dimension} (Gorenstein dimension) in the sense of Auslander 
and Bridger \cite{AB} if it has that property at each maximal ideal of $R$; see 
Corollary~\ref{cor:localGdim}.
%


In Theorem \ref{thm:reflsemid} we characterize pairs of mutually reflexive 
complexes:

\begin{itheorem}
\label{ithm:reflsemid}
The complexes $C$ and $M$ are semidualizing and satisfy
$C\simeq L\otimes_RM$ for some invertible graded $R$-module $L$ 
if and only if $M$ is derived $C$-reflexive, $C$~is derived $M$-reflexive, 
and $\hh{M}_\fp\ne0$ holds for every $\fp\in\Spec R$.
 \end{itheorem}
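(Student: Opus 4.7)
For the forward direction I would verify the three conditions by direct computation. If $L$ is an invertible graded $R$-module, then $L$ is flat with $\Rhom RLR\iso L^{-1}$, and for complexes $X,Y$ one has natural isomorphisms $L\otimes_R\Rhom RXY\iso\Rhom RX{L\otimes_R Y}$. Since $M$ is semidualizing,
\[
\Rhom RMC \iso L\otimes_R\Rhom RMM \iso L,
\]
which lies in $\dcatb R$, and
\[
\Rhom R{\Rhom RMC}C \iso \Rhom RL{L\otimes_R M} \iso \Rhom RLL\otimes_R M \iso M,
\]
using $\Rhom RLL\iso L^{-1}\otimes_R L\iso R$. A naturality check identifies the composite with $\delta^C_M$, so $M$ is derived $C$-reflexive; a parallel calculation gives derived $M$-reflexivity of $C$. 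Nonvanishing of $\hh M_\fp$ is immediate from $M$ semidualizing, since $R_\fp\iso\Rhom{R_\fp}{M_\fp}{M_\fp}$ forces $M_\fp\not\simeq0$.

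For the converse, set $L:=\Rhom RMC$ and $K:=\Rhom RCM$; the hypotheses give $L,K\in\dcatb R$ together with natural isomorphisms
\[
M \iso \Rhom RLC \qquad\text{and}\qquad C \iso \Rhom RKM.
\]
Because derived reflexivity is local by Theorem~\ref{ithm:reflexive}, and being semidualizing, invertibility of $L$, and the relation $C\simeq L\otimes_R M$ are all local conditions, I would reduce to the case $(R,\fm)$ local. There the support hypothesis becomes $\hh M\ne0$, and invertibility of a graded module amounts to being a shift of $R$, so the goal becomes to prove $C$ and $M$ are semidualizing and $C\simeq M[n]$ for some integer $n$.

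The crux is to realize $L$ as invertible with inverse $K$. My plan is to use the natural composition pairings
\[
K\dtensor R L \lra \Rhom RCC \qquad\text{and}\qquad L\dtensor R K \lra \Rhom RMM,
\]
and argue that both factor through $R$ as isomorphisms. This simultaneously yields that $C$ and $M$ are semidualizing and that $K$ is inverse to $L$; then applying $\Rhom R{-}C$ to $L\dtensor R K\iso R$ together with $\Rhom RLC\iso M$ delivers $C\iso L\dtensor R M$ by adjunction. Finally, in the local case $L$ invertible forces $L\simeq R[n]$, hence $C\simeq M[n]$, completing the argument.

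The principal obstacle will be establishing that the composition pairings above are isomorphisms. I expect this step to require the tensor-evaluation and adjointness identities developed in the preceding sections for bounded, homologically finite complexes, together with the nonvanishing of $\hh M$ to rule out degenerate scenarios in which $L\dtensor R K$ might collapse away from~$R$.
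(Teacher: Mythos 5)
Your forward direction is sound and follows the same route as the paper's implication (iii)$\implies$(i) of Theorem~\ref{thm:reflsemid}: the paper cites Proposition~\ref{prop:semidualizing} and Lemma~\ref{refl:operations}, while you unwind those lemmas by direct computation with $L\otimes_R-$; either way Proposition~\ref{prop:foxby} converts the abstract isomorphism into the statement that $\delta^C_M$ itself is an isomorphism.

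The converse has a genuine gap exactly where you flag ``the principal obstacle'': you never establish that the pairings $K\dtensor RL\to\Rhom RCC$ and $L\dtensor RK\to\Rhom RMM$ are isomorphisms, nor that they factor through $R$ as isomorphisms, and this is not a technical loose end but the entire content of the theorem. The tensor-evaluation map \eqref{eq:tensor_evaluation} is only known to be an isomorphism when $L$ is invertible or perfect (Proposition~\ref{prop:semid}), which is precisely what you are trying to prove, and the assertion that the pairing ``factors through $R$'' is equivalent to $C$ being semidualizing --- again a conclusion, not a hypothesis. The paper closes this gap with a numerical argument absent from your plan: in the local case, Lemma~\ref{lem:bass} applied to the reflexivity isomorphisms $M\simeq\Rhom RLC$ and $C\simeq\Rhom RKM$ yields $I_R^M(t)=P^R_L(t)\cdot I_R^C(t)$ and $I_R^C(t)=P^R_K(t)\cdot I_R^M(t)$; since $I_R^M(t)\ne0$ by the support hypothesis and \ref{supp:bass}, one gets $P^R_L(t)\cdot P^R_K(t)=1$, and a product of Laurent series with non-negative integer coefficients equals $1$ only if $P^R_L(t)=t^{r}$, whence $L\simeq\Shift^{r}R$ by \ref{minimal} and $C\simeq\Shift^{r}M$. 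Semidualizingness of $M$ then falls out because derived $M$-reflexivity of $C$ becomes derived $M$-reflexivity of $M$ itself. Finally, your localization step is too coarse: proving $C_\fp\simeq\Shift^{r(\fp)}M_\fp$ prime by prime does not by itself produce a single invertible graded module $L$ with $C\simeq L\otimes_RM$; the paper avoids this by making the intermediate statement --- invertibility of the global complex $\Rhom RMC$ together with the evaluation map $\Rhom RMC\dtensor RM\to C$ being an isomorphism --- one that can be verified locally while its data live globally.
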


In the last section we apply our results to the \emph{relative 
dualizing complex}~$D^\sigma$ attached to an 
algebra $\sigma\col K\to S$ essentially of finite type over a noetherian ring~$K$; see 
\cite[1.1 and 6.2]{AILN}.  We show that $D^\sigma$ is semidualizing if and 
only if $\sigma$ has finite G-dimension in the sense of \cite{AF:qG}.  
One case when the G-dimension of $\sigma$ is finite is if $S$ has finite 
flat dimension as $K$-module.  In this context, a result of \cite{AILN} implies 
that $D^{\sigma}$-rigidity is equivalent to rigidity relative to $K$, in the sense 
of \cite{YZ2}. We prove:

\begin{itheorem}
\label{ithm:YZ}
If $K$ is Gorenstein, the flat dimension of the $K$-module $S$ is finite, and 
$\dim S$ is finite, then $D^\sigma$ is dualizing for $S$ and is rigid relative to $K$. 

When moreover $\Spec S$ is connected, $D^{\sigma}$ is the unique, up to unique 
rigid isomorphism, non-zero complex in $\dcatb S$ that is rigid relative to $K$.
  \end{itheorem}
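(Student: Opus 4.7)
The plan is to prove the two assertions separately, leveraging Theorem~\ref{ithm:rigid} together with the two facts announced in the paragraph preceding the statement: (i) finite flat dimension of $S$ over $K$ forces $\sigma$ to have finite G-dimension, so that $D^\sigma$ is semidualizing for $S$; and (ii) under the same hypothesis, $D^\sigma$-rigidity is equivalent to rigidity relative to $K$ in the sense of \cite{YZ2}, by a result from \cite{AILN}.

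For existence, fact~(i) shows that $D^\sigma$ is semidualizing, so Theorem~\ref{ithm:rigid} supplies a canonical $D^\sigma$-rigidifying isomorphism for $D^\sigma$ itself; via~(ii) this is already a rigidification of $D^\sigma$ relative to $K$. It remains to promote ``semidualizing'' to ``dualizing'': since $\dim S<\infty$, this reduces to verifying that $D^\sigma$ has finite injective dimension over $S$, which is a prime-local matter. Fix $\fq\in\Spec S$, put $\fp=\sigma^{-1}(\fq)$, and consider the localized homomorphism $\sigma_\fq\col K_\fp\to S_\fq$. The ring $K_\fp$ is Gorenstein, and a short dimension count from $\fd_{K_\fp}S_\fq\le\fd_K S<\infty$ together with $\dim S_\fq\le\dim S<\infty$ shows $\dim K_\fp<\infty$, whence $K_\fp$ is itself (a shift of) a dualizing complex for $K_\fp$. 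Since formation of the relative dualizing complex commutes with localization, $(D^\sigma)_\fq\simeq D^{\sigma_\fq}$; and Grothendieck-duality functoriality along a factorization of $\sigma_\fq$ as a localized polynomial extension followed by a surjection shows that $D^{\sigma_\fq}$ is dualizing for $S_\fq$, hence of finite injective dimension.

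For uniqueness, let $M\in\dcatb S$ be non-zero and rigid relative to $K$. By fact~(ii), $M$ is $D^\sigma$-rigid. Since $\Spec S$ is connected and $D^\sigma$ is semidualizing, Theorem~\ref{ithm:rigid} produces a unique isomorphism $\alpha\col D^\sigma\iso M$ intertwining the canonical $D^\sigma$-rigidifying isomorphism on $D^\sigma$ with the given one on $M$; transported back through~(ii), $\alpha$ becomes the unique rigid isomorphism $D^\sigma\iso M$ in the sense of \cite{YZ2}. The main obstacle is the finite-injective-dimension check in the existence part: $K$ is not assumed to have globally finite Krull dimension, so the Gorenstein input must be harvested prime by prime on $\Spec S$ and then propagated to $D^\sigma$ via localization of the base map. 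Once this local compatibility and the dimension inequality $\dim K_\fp<\infty$ are in place, the remainder is essentially formal: apply Theorem~\ref{ithm:rigid} at the level of $D^\sigma$-rigidity, then translate across the equivalence of rigidities supplied by \cite{AILN}.
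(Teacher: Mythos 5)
Your proposal is correct and follows essentially the same route as the paper: the paper's Theorem 8.18 (\ref{thm:relativerigidityFD}) is obtained by combining the classification of $D^\sigma$-rigid complexes (Theorem \ref{thm:relativerigidity}, resting on Theorem \ref{thm:rigidity}/\ref{thm:uniquerigidity}, i.e.\ Theorem \ref{ithm:rigid}) with the localization argument showing $D^\sigma$ is (strongly pointwise) dualizing when $K$ is Gorenstein (Theorem \ref{thm:gorring}, which is exactly your prime-by-prime reduction via Proposition \ref{prop:naturality1} to a surjection over a Gorenstein local base, plus \ref{dualizing_classical} to pass from pointwise dualizing to dualizing using $\dim S<\infty$), and the translation to rigidity relative to $K$ via \ref{ch:YZrigidity}. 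The only cosmetic quibble is that your "dimension count" for $\dim K_\fp<\infty$ is unnecessary, since $K_\fp$ is a noetherian local ring and hence automatically of finite Krull dimension.
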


This result, which is contained in Theorem~\ref{thm:relativerigidityFD},
applies in particular when $K$ is regular, and is a broad generalization 
of one of the main results in \cite{YZ2}.

Our terminology and notation are mostly in line with literature in commutative 
algebra.  In particular, we put ``homological'' gradings on complexes, so at first 
sight some formulas may look unfamiliar to experts used to cohomological
conventions.  More details may be found in Appendix~\ref{sec:pbseries}, where
we also prove results on Poincar\'e series and Bass series of complexes 
invoked repeatedly in the body of the text.

We are grateful to Lars Winther Christensen, Amnon Neeman, and Sean Sather-Wagstaff for their comments and suggestions on earlier versions of this article.

\medskip

\centerline{***}

Several objects studied in this paper were introduced by Hans-Bj{\o}rn 
Foxby, and various techniques used below were initially developed by him.  We 
have learned a lot about the subject from his articles, his lectures, and through 
collaborations with him.  This work is dedicated to him in appreciation and friendship.

\section{Depth}
\label{Depth}
Throughout the paper \emph{$R$ denotes a commutative noetherian ring}. An $R$-module is said to be `finite' if it can be generated, as an $R$-module, by finitely many elements.

The \emph{depth} of a complex $M$ over a local ring $R$ with residue field 
$k$ is the number
  \[
\depth_{R}M=\inf\{n\in\ZZ\mid \Ext nRkM\ne 0\}\,.
  \]

We focus on a global invariant that appears in work of Chouinard and Foxby:
  \begin{equation}
   \label{eq:rfd}
\rfd RM = \sup\{\depth R_{\fp} - \depth_{R_{\fp}}M_{\fp}\mid \fp\in  \Spec R\}\,.
  \end{equation}
See \ref{notes:rfd} for a different description of this number. Our goal is to prove:

\begin{theorem}
\label{thm:rfd}
Every complex $M$ in $\dcatb R$ satisfies $\rfd RM<\infty$.
\end{theorem}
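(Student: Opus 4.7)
The plan is to reduce, by truncation and induction on amplitude, to the case of a single finitely generated $R$-module, and then to handle that case via a Koszul-complex analysis.

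For the reduction, set $s=\sup\hh M$ and use the canonical truncation triangle
$$\tau_{\leq s-1}M \lra M \lra \Shift^s H_s(M) \lra $$
in $\dcatb R$. Localization at any $\fp\in\Spec R$ preserves exact triangles, and the standard depth-in-triangles inequality $\depth_{R_\fp} M_\fp \geq \min\{\depth_{R_\fp}(\tau_{\leq s-1}M)_\fp,\ \depth_{R_\fp}\Shift^s H_s(M)_\fp\}$, combined with the shift identity $\depth_{R_\fp}\Shift^s L_\fp = \depth_{R_\fp} L_\fp - s$, yields
$$\rfd{R}{M}\;\leq\;\max\bigl\{\rfd{R}{(\tau_{\leq s-1}M)},\ \rfd{R}{H_s(M)}+s\bigr\}.$$
Induction on the amplitude $\sup\hh M-\inf\hh M$ reduces the problem to the case where $M=N$ is a finitely generated $R$-module concentrated in degree zero.

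For such $N$, only primes $\fp\supseteq\ann_R N$ contribute to $\rfd{R}{N}$. A prime filtration $0=N_0\subsetneq\cdots\subsetneq N_r=N$ with successive quotients isomorphic to $R/\fq_i$ and another application of the triangle inequality reduces further to the case $N=R/\fq$ for a single prime $\fq$. For $\fp\supseteq\fq$, choose a generating set $\bsx$ of $\fp R_\fp$ with $n$ elements; the Koszul characterization $\depth_{R_\fp}L=n-\sup\hh{K(\bsx)\dtensor{R_\fp}L}$ then gives the identity
$$\depth R_\fp-\depth_{R_\fp}(R/\fq)_\fp\;=\;\sup\hh{K(\bsx)\dtensor{R_\fp}(R/\fq)_\fp}-\sup\hh{K(\bsx)}.$$

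The main obstacle is extracting from this identity a bound that is uniform over all $\fp\in V(\fq)$, which is subtle when $R$ has infinite Krull dimension and $\depth R_\fp$ is itself unbounded: the two terms on the right may each grow with $\height\fp$, and one must show that their difference does not. The resolution rests on the observation that this difference is governed by the intrinsic cohomological complexity of $R/\fq$ rather than by the ambient local rings $R_\fp$, as captured in the alternative description of $\rfd$ mentioned in \ref{notes:rfd}; finiteness then propagates back through the filtration and truncation reductions to give $\rfd{R}{M}<\infty$ in full generality.
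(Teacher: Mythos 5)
Your reductions are fine as far as they go: the truncation induction on amplitude and the prime filtration correctly reduce the statement to $N=R/\fq$ for a prime $\fq$, and the Koszul identity you write for $\depth R_\fp-\depth_{R_\fp}(R/\fq)_\fp$ is correct. But the final step is not a proof. You correctly identify the crux --- obtaining a bound uniform over all $\fp\in V(\fq)$ when $\dim R=\infty$ --- and then assert that the difference is ``governed by the intrinsic cohomological complexity of $R/\fq$,'' citing the Tor-description of $\rfd$ from \ref{notes:rfd}. That description does not help: its finiteness for an arbitrary noetherian ring is exactly equivalent to the statement being proved (for rings of finite Krull dimension the theorem is easy either way), so the appeal is circular. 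Moreover, your Koszul complex $K(\bsx)$ is built on generators of $\fp R_\fp$ and hence changes with $\fp$, with an unbounded number of generators; nothing in your setup controls how $\sup\hh{K(\bsx)\dtensor{R_\fp}(R/\fq)_\fp}$ and $\sup\hh{K(\bsx)}$ vary together.

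What is missing is Gabber's mechanism, which the paper adapts. First, one runs a noetherian induction on the ideal: choose $J$ maximal among ideals with $\rfd R{(R/J)}=\infty$, show $J$ is prime, and retain the inductive hypothesis that $\rfd R{(R/I)}<\infty$ for every strictly larger $I$. Second, one uses a \emph{fixed} Koszul complex $E$ on a finite generating set of $J$ (with $g$ elements) together with generic freeness: there is $f\notin J$ with each $(\HH iE)_f$ free over $(R/J)_f$. For primes $\fp\supseteq(J,f)$ the bound comes from the inductive hypothesis applied to $(J,f)$; for primes with $f\notin\fp\supseteq J$, the freeness makes the spectral sequence $\Ext{-p}{R_\fp}k{\HH q{E_\fp}}\Rightarrow\Ext{-p-q}{R_\fp}k{E_\fp}$ degenerate enough to give $\depth_{R_\fp}E_\fp=\depth_{R_\fp}(R/J)_\fp-\sup\hh{E_\fp}$, and then the Auslander--Buchsbaum equality for the perfect complex $E$ bounds $\depth R_\fp-\depth_{R_\fp}(R/J)_\fp$ by $g$, uniformly in $\fp$. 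Your prime filtration sets up no induction to fall back on for the primes containing $(J,f)$, and you have no substitute for the generic-freeness/spectral-sequence step; as written, the argument restates the theorem rather than proving it.
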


The desired inequality is obvious for rings of finite Krull dimension. To
handle the general case, we adapt the proof of a result of Gabber,
see Proposition~\ref{prop:gabber}.

A couple of simple facts are needed to keep the argument going:

\begin{chunk}
 \label{rfd:sequence}
If $0\to L\to M\to N\to0$ is an exact sequence of complexes 
then one has
  \[
\rfd RM\le\max\{\rfd RL,\rfd RN\}\,.
  \]

Indeed, for every $\fp\in\Spec R$ and each $n\in\BZ$ one has an  
induced exact sequence
  \[
\Ext n{R_\fp}{R_\fp/\fp R_\fp}{L_\fp}\to \Ext n{R_\fp}{R_\fp/\fp R_ 
\fp}{M_\fp}
\to \Ext n{R_\fp}{R_\fp/\fp R_\fp}{N_\fp}
  \]
that yields $\depth_{R_\fp}\!{M_\fp}\ge\min\{\depth_{R_\fp}\!{L_\fp}, 
\depth_{R_\fp}\!{N_\fp}\}$.
  \end{chunk}

The statement below is an Auslander-Buchsbaum Equality for complexes:

\begin{chunk}
 \label{abe}
Each bounded complex $F$ of finite free modules over a local ring $R$ has
  \[
\depth_{R}F = \depth R - \sup\hh{k\otimes_{R}F}\,,
  \]
see \cite[3.13]{Fx1}.  This formula is an immediate consequence of  
the isomorphisms
\[
\Rhom RkF\simeq \Rhom RkR\dtensor RF\simeq
\Rhom RkR\dtensor{k}(k\otimes_{R}F)
\]
 in $\dcat [R]$, where the first one holds because $F$ is finite free.
\end{chunk}

\begin{proof}[Proof of Theorem \emph{\ref{thm:rfd}}]
It's enough to prove that $\rfd RM<\infty$ holds for cyclic modules.
Indeed, replacing $M$ with a quasi-isomorphic complex we may assume 
$\amp M=\amp\hh M$.  If one has $\amp M=0$, then $M$ is a shift of a 
finite $R$-module, so an induction on the number of its generators,
using \ref{rfd:sequence}, shows that $\rfd RM$ is finite.  Assume  the 
statement holds for all complexes of a given amplitude.  Since 
$L=\Shift^iM_i$ with $i=\inf M$ is a subcomplex of $M$, and one has 
$\amp(M/L)<\amp M$, using \ref{rfd:sequence} and induction we obtain 
$\rfd RM\le\max\{\rfd RL,\rfd R(M/L)\}<\infty$.

By way of contradiction, assume $\rfd R{(R/J)} = \infty$ holds for
some ideal $J$ of $R$.  Since $R$ is noetherian, we may choose $J$ so
that $\rfd R{(R/I)}$ is finite for each ideal $I$ with $I\supsetneq J$.
The ideal $J$ is prime:  otherwise one would have an exact sequence
  \[
0\to R/J' \to R/J\to R/I\to 0\,,
  \]
where $J'$ is a prime ideal associated to $R/J$ with $J'\supsetneq J$;
this implies $I\supsetneq J$, so in view of \ref{rfd:sequence} the exact
sequence yields $\rfd R{(R/J)}<\infty$, which is absurd.

Set $S=R/J$, fix a finite generating set of $J$, let $g$ denote its
cardinality, and $E$ be the Koszul complex on it. As $S$ is a domain
and $\bigoplus_i\HH iE$ is a finite $S$-module, we may choose $f\in
R\smallsetminus J$ so that each $S_{f}$-module ${\HH iE}{}_{f}$
is free.  Now $(J,f)\supsetneq J$ implies that $j=\rfd R(R/(J,f))$ is finite.
To get the desired contradiction we prove
\[
\depth R_{\fp} - \depth_{R_{\fp}}S_{\fp} \le \max\{j-1,g\}
\quad\text{for each}\quad \fp\in\Spec R\,.
\]

In case $\fp\not\supseteq J$ one has $\depth_{R_{\fp}}S_\fp=\infty$, so the
inequality obviously holds. 

When $\fp\supseteq(J,f)$ the exact sequence
  \[
0\to S\xra{f}S\to R/(J,f)\to 0
  \]
yields $\depth_{R_{\fp}}S_{\fp} = \depth_{R_{\fp}}(R/(J,f))_{\fp}+1$,
and hence one has
\[
\depth R_{\fp} - \depth_{R_{\fp}}S_{\fp} \le j-1\,.
\]

It remains to treat the case $f\notin\fp\supseteq J$.  Set $k=R_\fp/\fp
R_\fp$, $d=\depth_{R_\fp}\!S_\fp$, and $s=\sup\hh{E_\fp}$. In the second
quadrant spectral sequence
   \begin{align*}
\EH 2pq= \Ext {-p}{R_\fp}k{\HH q{E_\fp}}
  & \Longrightarrow \Ext {-p-q}{R_\fp}k{E_\fp}
\\
\ED rpq\col \EH rpq
  &\lra \EH r{p-r}{q+r-1}
  \end{align*}
one has $\EH 2pq=0$ for $q>s$, and also for $p>-d$ because each $\HH
q{E_\fp}$ is a finite direct sum of copies of $S_{\fp}$. Therefore, the sequence
converges strongly and yields
   \[
\Ext {i}{R_\fp}k{E_\fp} \cong
  \begin{cases}
  0 &\text{for }i< d-s\,,\\
  \Ext d{R_\fp}k{\HH s{E_\fp}}\ne 0&\text{for } i= d-s\,.
  \end{cases}
   \]
The formula above implies $\depth_{R_\fp}\!{E_\fp} = d-s$.  This gives
the first equality below:
   \begin{align*}
\depth{R_\fp} - \depth_{R_\fp}\!S_\fp
&= \depth{R_\fp} - \depth_{R_\fp}\!{E_\fp} - s \\
&= \sup\hh{k\otimes_{R_\fp}{E_\fp}} - s \\
& \le g-s \\
& \le g\,.
   \end{align*}
The second equality comes from \ref{abe}.
  \end{proof}

A complex in $\dcatN R$ is said to have \emph{finite injective dimension}
if it is isomorphic in $\dcat[R]$ to a bounded complex of injective
$R$-modules.  The next result, due to Ischebeck \cite[2.6]{Is}
when $M$ and $N$ are modules, can be deduced from~\cite[4.13]{CFF}.

\begin{lemma}
\label{ishebeck}
Let $R$ be a local ring and $N$ in $\dcatb R$ a complex of finite injective 
dimension. For each $M$ in $\dcatb R$ there is an equality
\[
\sup\{n\in\ZZ\mid \Ext nRMN\ne 0\} = \depth R - \depth M - \inf\hh N\,.
\]
\end{lemma}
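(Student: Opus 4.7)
The plan is to reduce to Ischebeck's classical theorem~\cite[2.6]{Is} by dévissage in $\dcatb R$. First, replacing $N$ with $\Shift^{-\inf\hh N}N$ we may assume $\inf\hh N=0$; both sides of the desired identity shift uniformly, so no generality is lost. Under this normalization, Bass's formula for complexes of finite injective dimension yields $\id_R N=\depth R$, and the identity becomes
\[
\sup\{n\in\BZ\mid \Ext nRMN\ne0\}\;=\;\id_R N-\depth_R M\,.
\]

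Next, reduce to the case in which $M$ is a finite $R$-module. This is achieved by induction on $\amp\hh M$: for $M\in\dcatb R$, the hard truncation provides an exact triangle $\Shift^i\!H_i(M)\to M\to M'$ with $\amp\hh M'<\amp\hh M$, inducing a long exact sequence of $\Ext$-groups that---together with the elementary behavior of shifts---controls $\sup\{n\mid\Ext nRMN\ne0\}$ in terms of the corresponding sups for the shifted finite $R$-modules $H_i(M)$.

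Thus suppose $M$ is a finite $R$-module. Choose a minimal bounded injective resolution $N\simeq I$; then $\Ext nRMN=H^n\Hom RMI$, so $\Ext nRMN=0$ for $n>\id_R N$. Sharpen this to the claimed value by induction on $\depth_R M$. The case $\depth_R M=0$ follows from a dévissage of the maximal $\fm$-torsion submodule of $M$ combined with Bass's equality $\sup\{n\mid\Ext nRkN\ne0\}=\id_R N$; the step $\depth_R M>0$ uses an $M$-regular element $x\in\fm$, the long exact sequence induced by $0\to M\xra{x}M\to M/xM\to 0$, and Nakayama's lemma applied to the finite $R$-module $\Ext nRMN$.

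The principal obstacle is that the individual cohomology modules $H^q(N)$ need not have finite injective dimension, so one cannot simply invoke Ischebeck's original theorem degree by degree via the hyperhomology spectral sequence. This is handled by working directly with $\Hom RMI$ and carrying the induction on $\depth_R M$ all the way through, rather than attempting a further dévissage on $N$.
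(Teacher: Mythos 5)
Your treatment of the case where $M$ is a finite module is fine: the induction on $\depth_{R}M$ via an $M$-regular element and Nakayama, with the base case handled through a copy of $k$ inside $M$ and the nonvanishing of $\Ext{}RkN$ in the top degree, is Ischebeck's classical argument and it does extend to a bounded complex $N$ of injectives. The gap is in the reduction from complexes to modules. From the truncation triangle $\Shift^{i}H_i(M)\to M\to M'\to$ the long exact sequence gives only an \emph{upper} bound for $\sup\{n\mid\Ext nRMN\ne0\}$, namely the maximum of the suprema for the two outer terms, while on the depth side it gives only the \emph{lower} bound $\depth_{R}M\ge\min\{\depth_{R}\Shift^{i}H_i(M),\depth_{R}M'\}$. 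Both estimates can be simultaneously strict, and then the inductive hypothesis carries no information about the connecting morphisms that determine the actual values. In fact $\sup\{n\mid\Ext nRMN\ne0\}$ is \emph{not} a function of the homology modules of $M$, so no argument that feeds only the $H_i(M)$ into the induction can succeed. Concretely, let $R=k[[x,y]]$, $N=R$, and let $M$ be the complex $0\to R^{2}\to R\to 0$, concentrated in degrees $1,0$, with differential $(a,b)\mapsto ax+by$. Then $H_0(M)=k$ and $H_1(M)\cong R$, so the truncation pieces $k$ and $\Shift R$ have suprema $2$ and $1$; but $\Rhom RMR$ is the complex $0\to R\to R^{2}\to 0$ in degrees $0,-1$ with differential $c\mapsto(cx,cy)$, whence $\Ext 1RMR\cong\fm\ne0$ and $\Ext 2RMR=0$: the supremum for $M$ is $1$, as the formula predicts since $\depth_{R}M=1$ by \ref{abe}. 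The complex $k\oplus\Shift R$ has the same homology and supremum $2$. Your long exact sequence therefore yields only the bound $2$ for this $M$, and the required vanishing of $\Ext 2RMR$ hinges on the connecting map, which the induction never sees. (The same example gives $\depth_{R}M=1>0=\min\{\depth_{R}k,\depth_{R}\Shift R\}$, so the depth of $M$ is likewise not recoverable from the pieces.)

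For comparison, the paper's proof sidesteps dévissage entirely: finiteness of the injective dimension of $N$ gives an isomorphism $k\dtensor R\Rhom RMN\simeq\Rhom R{\Rhom RkM}N$, which by adjunction over $k$ becomes $\Hom k{\hh{\Rhom RkM}}{\hh{\Rhom RkN}}$; comparing infima (via Lemma \ref{lem:poincare}) yields $\inf\hh{\Rhom RMN}=\inf\hh{\Rhom RkN}+\depth_{R}M$ for all $M$ in $\dcatb R$ at once. To salvage a truncation-based argument you would have to control the connecting maps, or prove the two inequalities of the lemma by separate means.
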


\begin{proof}
Let $k$ be the residue field $k$ of $R$.  The first isomorphism below
holds because $N$ has finite injective dimension 
and $M$ is in $\dcatb R$, see \cite[4.4.I]{AF:hd}:
\begin{align*}
\hh{k\dtensor R\Rhom RMN} 
 &\cong \hh{\Rhom R{\Rhom RkM}N} \\
 &\cong \hh{\Rhom k{\Rhom RkM}{\Rhom RkN}} \\
 &\cong \Hom k{\hh{\Rhom RkM}}{\hh{\Rhom RkN}}\,.
\end{align*}
The other isomorphisms are standard. One deduces the 
second equality below:
\begin{align*}
\inf \hh{\Rhom RMN} &= \inf\hh{k\dtensor R\Rhom RMN}\\
 &= \inf \hh{\Rhom RkN} + \depth_{R}M\,.
\end{align*}
The first one comes from Lemma~\ref{lem:poincare}. In particular, 
for $M=R$ this yields
\[
\inf\hh{\Rhom RkN}=\inf\hh N - \depth R\,.
\]
Combining the preceding equalities, one obtains the desired assertion.
\end{proof}

The next result is due to Gabber~\cite[3.1.5]{Co}; Goto \cite{Go} had 
proved it for $N=R$.

\begin{proposition}
\label{prop:gabber}
For each $N$ in $\dcatb R$ the following conditions are equivalent.
\begin{enumerate}[\quad\rm(i)]
\item 
For each $\fp\in\Spec R$ the complex $N_{\fp}$ has finite injective 
dimension over $R_{\fp}$.
\item 
For each $M$ in $\dcatb R$ one has $\Ext nRMN=0$ for $n\gg0$.
\item[\quad\rm(ii$'$)] 
For each $\fm\in\Max R$ one has $\Ext nR{R/\fm}N=0$ for $n\gg0$.
\end{enumerate}
\end{proposition}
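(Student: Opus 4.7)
My plan is to prove the cycle (i) $\Rightarrow$ (ii) $\Rightarrow$ (ii$'$) $\Rightarrow$ (i); the middle implication is immediate by specializing $M$ to $R/\fm$ for each $\fm \in \Max R$.

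For (i) $\Rightarrow$ (ii), I would apply Lemma \ref{ishebeck} prime-by-prime. Fix $M$ in $\dcatb R$; for each $\fp \in \Spec R$, the complex $M_\fp$ lies in $\dcatb{R_\fp}$ and $N_\fp$ has finite injective dimension over $R_\fp$ by hypothesis, so the lemma yields
\[
\sup\{n : \Ext n{R_\fp}{M_\fp}{N_\fp} \ne 0\} = \depth R_\fp - \depth_{R_\fp} M_\fp - \inf \hh{N_\fp}.
\]
Theorem \ref{thm:rfd} makes the first difference uniformly bounded by $\rfd RM < \infty$, and $\inf \hh{N_\fp} \ge \inf \hh N$ since $\hh{N_\fp}$ is a localization of $\hh N$; hence the right-hand side is at most $b := \rfd RM - \inf \hh N$ independently of $\fp$. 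Since $M$ admits a resolution by finite free $R$-modules, one has $(\Ext nRMN)_\fp \cong \Ext n{R_\fp}{M_\fp}{N_\fp}$, so $\Ext nRMN$ vanishes at every prime for $n > b$, and therefore vanishes.

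For (ii$'$) $\Rightarrow$ (i), I would proceed in two steps. Fix $\fm \in \Max R$ and write $k = R_\fm/\fm R_\fm$; localizing (ii$'$) at $\fm$ gives $\Ext n{R_\fm}{k}{N_\fm} = 0$ for $n \gg 0$. The classical residue-field criterion for finite injective dimension of a complex over a noetherian local ring (see, e.g., \cite{AF:hd}) then delivers $\id_{R_\fm} N_\fm < \infty$. For an arbitrary prime $\fp$, pick a maximal $\fm \supseteq \fp$: since injective $R_\fm$-modules remain injective under further localization, a bounded injective resolution of $N_\fm$ localizes to one of $N_\fp$, so $\id_{R_\fp} N_\fp < \infty$.

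The principal obstacle will be the implication (ii$'$) $\Rightarrow$ (i): it rests on the residue-field criterion for finite injective dimension of complexes over a local ring, the deepest ingredient, which upgrades mere Ext-vanishing against the residue field to the structural conclusion that $N$ is quasi-isomorphic to a bounded complex of injectives. The remainder of the proof is essentially uniform bookkeeping, made possible by the finiteness of $\rfd RM$ supplied by Theorem \ref{thm:rfd}.
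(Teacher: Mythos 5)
Your proposal is correct and follows essentially the same route as the paper: (i)$\implies$(ii) via Lemma~\ref{ishebeck} applied prime-by-prime with the uniform bound $\rfd RM-\inf\hh N$ supplied by Theorem~\ref{thm:rfd}, and (ii$'$)$\implies$(i) via localizing the Ext modules at $\fm$, invoking the Bass-series/residue-field criterion \ref{Bass series} for finite injective dimension, and then localizing further to an arbitrary prime. The only cosmetic difference is that you route the argument as a cycle through (ii$'$), while the paper treats (ii)$\implies$(ii$'$) as a tautology and proves the same two implications.
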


\begin{proof}
(i)$\implies$(ii). For each prime $\fp$, Lemma~\ref{ishebeck} yields 
the second equality below:
\begin{align*}
 - \inf\hh{{\Rhom RMN}_{\fp}} 
 &= - \inf{\hh{\Rhom {R_{\fp}} {M_{\fp}} {N_{\fp}} }}\\
 &=\depth R_{\fp} - \depth_{R_{\fp}}M_{\fp} - \inf\hh{N_{\fp}} \\
 &\leq \rfd RM - \inf\hh N \,.
\end{align*}
Theorem~\ref{thm:rfd} thus implies the desired result.

(ii$'$)$\implies$(i). Since $N$ is in $\dcatb R$ for each integer $n$ one has an isomorphism
\[
\Ext n{R_{\fm}}{R_{\fm}/\fm R_{\fm}}{N_{\fm}} \cong {\Ext nR{R/\fm}N}_{\fm}\,.
\]
Thus, the hypothesis and \ref{Bass series} imply $N_{\fm}$ has finite injective 
dimension over $R_{\fm}$.  By localization, $N_{\fp}$ has finite injective 
dimension over $R_{\fp}$ for each prime $\fp\subseteq\fm$.
  \end{proof}

 \begin{notes}
   \label{notes:rfd}
In \cite[2.1]{CFF} the number $\rfd RM$ is defined by the formula
 \[
\rfd RM=\sup\{n\in\BZ\mid\Tor nRTM\ne0\}\,,
  \]
where $T$ ranges over the $R$-modules of finite flat  dimension, and is called 
the \emph{large restricted flat dimension} of $M$ (whence, the notation).  We 
took as definition formula \eqref{eq:rfd}, which is due to Foxby 
(see~\cite[Notes, p.~131]{Ch1}) and is proved in \cite[5.3.6]{Ch1} and 
\cite[2.4(b)]{CFF}.  For $M$ of finite flat 
dimension one has $\rfd M=\fd_RM$, see \cite[5.4.2(b)]{Ch1} or \cite[2.5]{CFF}, 
and then \eqref{eq:rfd} goes back to Chouinard~\cite[1.2]{Cho}.  
  \end{notes}
  
\section{Derived reflexivity}
 \label{Derived reflexivity}

For every pair $C,M$ in $\dcat[R]$ there is a canonical \emph{biduality
morphism}
 \begin{equation}
   \label{eq:biduality}
\delta^C_M\col M\to \Rhom R{\Rhom RMC}C\,,
 \end{equation}
induced by the morphism of complexes
$m\mapsto(\alpha\mapsto(-1)^{|m||\alpha|}\alpha(m))$.  We say that
$M$ is \emph{derived $C$-reflexive} if both $M$ and $\Rhom RMC$ 
are in $\dcatb R$, and $\delta^C_M$ is an isomorphism.
Some authors write `$C$-reflexive' instead of `derived $C$-reflexive'.

Recall that the \emph{support} of a complex $M$ in $\dcatb R$ is the set
 \[
\Supp_RM=\{\fp\in\Spec R\mid \hh M_\fp\ne0 \}\,.
 \]

\begin{theorem}
\label{thm:refl}
Let $R$ be a noetherian ring and $C$ a complex in $\dcatb R$.

For each complex $M$ in $\dcatb R$ the following conditions are equivalent.
\begin{enumerate}[\quad\rm(i)]
  \item
$M$ is derived $C$-reflexive.
  \item
$\Rhom RMC$ is derived $C$-reflexive and $\Supp_{R}M\subseteq\Supp_{R}C$
holds.
  \item
$\Rhom RMC$ is in $\dcatP R$, and for every $\fm\in\Max R$ one has
\[
M_{\fm}\simeq \Rhom{R_{\fm}}{\Rhom{R_{\fm}}{M_{\fm}}{C_{\fm}}}{C_{\fm}}
\quad\text{in}\quad \dcat[R_{\fm}]\,.
\]
  \item
$U^{-1}M$ is derived $U^{-1}C$-reflexive for each multiplicatively closed set $U\subseteq R$.
  \end{enumerate}
\end{theorem}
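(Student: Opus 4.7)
The approach is to prove the equivalences through the cycle (i) $\Rightarrow$ (ii) $\Rightarrow$ (iii) $\Rightarrow$ (i), and to handle (iv) separately via (i) $\Leftrightarrow$ (iv). The substance is concentrated in the implication (iii) $\Rightarrow$ (i); the remaining arrows are formal consequences of the naturality of $\delta^C_M$ under $\Rhom$ and of flat base change, aided by the Poincar\'e/Bass-series machinery deferred to the appendix and by Theorem~\ref{thm:rfd}.

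For (i) $\Leftrightarrow$ (iv), I would use that for $X, Y \in \dcatb R$ the canonical map $U^{-1}\Rhom RXY \to \Rhom{U^{-1}R}{U^{-1}X}{U^{-1}Y}$ is an iso by flatness of localization, and that this identifies the localization of $\delta^C_M$ with $\delta^{U^{-1}C}_{U^{-1}M}$; hence (i) $\Rightarrow$ (iv) is immediate and (iv) $\Rightarrow$ (i) is the case $U = \{1\}$. The implication (i) $\Rightarrow$ (iii) then follows from (i) $\Rightarrow$ (iv) by specializing to the complements of maximal ideals. For (i) $\Rightarrow$ (ii), applying $\Rhom R(-,C)$ to $\delta^C_M$ yields, up to a sign, an inverse of $\delta^C_{\Rhom RMC}$, so $\Rhom RMC$ is $C$-reflexive; for the support inclusion, $\fp \notin \Supp_R C$ forces $C_\fp = 0$ and hence $M_\fp = 0$ via reflexivity localized at $\fp$. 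For (ii) $\Rightarrow$ (iii), $\Rhom RMC \in \dcatb R \subseteq \dcatP R$ is clear, and at each maximal $\fm$ the required iso is trivial when $\fm \notin \Supp_R C$ and otherwise is extracted from the localized $C_\fm$-reflexivity of $\Rhom{R_\fm}{M_\fm}{C_\fm}$ via a Poincar\'e-series comparison drawn from the appendix.

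The heart of the argument is (iii) $\Rightarrow$ (i), which I would do in two steps. First, to show $\Rhom RMC \in \dcatb R$: since it is already bounded above, it suffices to bound $\inf \hh{(\Rhom RMC)_\fm}$ below for each maximal $\fm$; the abstract iso $M_\fm \simeq \Rhom{R_\fm}{\Rhom{R_\fm}{M_\fm}{C_\fm}}{C_\fm}$ places the iterated $\Rhom$ in $\dcatb{R_\fm}$, and a Nakayama-type application of Lemma~\ref{lem:poincare} transfers this bound back to $\Rhom{R_\fm}{M_\fm}{C_\fm}$. Second, to verify that $\delta^C_M$ is an iso, I would check locally at each maximal $\fm$: with both source and target of $\delta^{C_\fm}_{M_\fm}$ now in $\dcatb{R_\fm}$ and abstractly isomorphic, a residue-field Poincar\'e-series computation using Lemma~\ref{lem:poincare} forces the cone of $\delta^{C_\fm}_{M_\fm}$ to have vanishing homology.

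The principal obstacle is precisely this last step: the hypothesis (iii) supplies only an \emph{abstract} local isomorphism, not one induced by the canonical biduality map, so the essential work lies in upgrading matching local homological invariants to the assertion that $\delta^C_M$ itself is invertible. This is where Lemma~\ref{lem:poincare} and the Bass-series formulas of the appendix enter indispensably, and it is also what makes the local-to-global transition nontrivial in the present generality, where $C$ is only assumed to lie in $\dcatb R$.
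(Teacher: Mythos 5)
Your overall architecture is workable and several arrows are fine as sketched ((i)$\iff$(iv) via $U=\{1\}$ and flat base change, (i)$\implies$(iii), (i)$\implies$(ii)), but the step you yourself single out as the heart of the matter, (iii)$\implies$(i), does not work as proposed, and the same flaw undermines your (ii)$\implies$(iii). The problem is that equality of Poincar\'e (or Bass) series of the source and target of a morphism never forces the cone of that morphism to be acyclic. Already for $R$ local and $M=R$, multiplication by a nonunit nonzerodivisor $r$ is a map $R\to R$ between complexes with identical homological invariants whose cone $R/rR$ is nonzero. So from ``$M_\fm$ and $\Rhom{R_\fm}{\Rhom{R_\fm}{M_\fm}{C_\fm}}{C_\fm}$ are abstractly isomorphic, hence have matching invariants'' you cannot conclude that $\delta^{C_\fm}_{M_\fm}$ is invertible; likewise, in your (ii)$\implies$(iii), an equality of Poincar\'e series does not produce an isomorphism of complexes.

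The missing idea is the ``triality'' identity: writing $\sh=\Rhom R-C$, one has $\sh(\delta^C_M)\circ\delta^{C}_{\sh(M)}=\id^{\sh(M)}$, so $\hh{\delta^{C}_{\sh(M)}}$ is a \emph{split monomorphism} of degreewise finite modules. This is what converts an abstract isomorphism into a statement about the canonical map: applying $\sh$ to the given isomorphism yields $\hh{\sh(M)}\cong\hh{\sh^{3}(M)}$, and a split monomorphism between isomorphic noetherian modules is an isomorphism; hence $\delta^{C}_{\sh(M)}$ and $\sh(\delta^C_M)$ are isomorphisms, and a commutative square comparing $\delta^C_M$ with $\delta^{C}_{\sh^{2}(M)}$ through the abstract isomorphism shows $\delta^C_M$ itself is invertible (this is Foxby's Proposition~\ref{prop:foxby}, proved via \ref{triality}). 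The paper's (ii)$\implies$(i) likewise avoids numerical invariants: it forms the cone $N$ of $\delta^C_M$, uses triality to get $\hh{\sh(N)}=0$, checks $\Supp_{R}N\subseteq\Supp_{R}C$, and concludes $N=0$ from $\Supp_{R}N=\Supp_{R}\Rhom RNC=\varnothing$. Two smaller points: your ``first step'' of (iii)$\implies$(i) is unnecessary, since condition (iii) already hypothesizes $\Rhom RMC\in\dcatP R$ and boundedness above is automatic from \ref{derived-functors}; and had it been needed, a lower bound at each maximal ideal separately would not give a global bound without uniformity.
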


The proof is based on a useful criterion for derived $C$-reflexivity.

\begin{chunk}
\label{triality}
Let $C$ and $M$ be complexes of $R$-modules, and set $\sh=\Rhom R-C$.

The composition $\sh(\delta_M^{C})\circ\delta^{C}_{\sh(M)}$ is the identity
map of $\sh(M)$ so the map
\[
\hh {\delta^{C}_{\sh(M)}}\col\hh {\sh(M)}\to\hh {\sh^3(M)}
\]
is a split monomorphism. Thus, if $\sh(M)$ is in $\dcatfg R$ and  
there exists
some isomorphism $\hh{\sh(M)}\cong \hh{\sh^{3}(M)}$, then 
$\delta^{C}_{\sh(M)}$ and $\sh(\delta_{M}^{C})$ are isomorphisms 
in $\dcat[R]$.
  \end{chunk}

The following proposition is an unpublished result of Foxby.

\begin{proposition}
\label{prop:foxby}
If for $C$ and $M$ in $\dcatb R$ there exists an isomorphism
  \[
\mu\col M \simeq \Rhom R{\Rhom RMC}C
 \quad\text{in}\quad \dcat[R]\,,
  \]
then the biduality morphism $\delta^{C}_{M}$ is an isomorphism as well.
  \end{proposition}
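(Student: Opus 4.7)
The plan is to apply the criterion in \ref{triality} twice: once to $M$, yielding that $\sh(\delta^C_M)$ is an isomorphism, and once to $\sh(M)$ in place of $M$, yielding that $\delta^C_{\sh^2(M)}$ is an isomorphism. Naturality of $\delta^C$ applied to the given isomorphism $\mu$ will then force $\delta^C_M$ itself to be an isomorphism.

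First I would verify the hypotheses of \ref{triality} for $M$, with $\sh=\Rhom R{-}C$. The functor $\sh$ carries $\dcatb R$ into $\dcatfg R$: indeed, resolving $M$ by a (possibly unbounded above) complex of finite free $R$-modules and using that $C\in\dcatb R$ shows each $\Ext iRMC$ is a finite $R$-module. Next, since $\sh$ is a functor on $\dcat[R]$, applying it to $\mu\col M\simeq\sh^2(M)$ produces an isomorphism $\sh(\mu)\col\sh(M)\simeq\sh^3(M)$, so in particular $\hh{\sh(M)}\cong\hh{\sh^3(M)}$. The criterion in \ref{triality} then yields that both $\sh(\delta^C_M)$ and $\delta^C_{\sh(M)}$ are isomorphisms.

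Now I would apply the same criterion once more, this time with $\sh(M)$ playing the role of $M$. The required hypotheses are that $\sh(\sh(M))=\sh^2(M)$ lies in $\dcatfg R$, which holds because $\sh^2(M)\simeq M\in\dcatb R\subseteq\dcatfg R$, and that there exist an isomorphism $\hh{\sh^2(M)}\cong\hh{\sh^4(M)}$, which is obtained by applying $\sh^2$ to $\mu$. The criterion therefore gives that $\delta^C_{\sh^2(M)}$ is an isomorphism.

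To conclude, I invoke naturality of the biduality transformation $\delta^C$ with respect to the morphism $\mu\col M\to\sh^2(M)$: this produces a commutative square whose sides are $\mu$, $\delta^C_M$, $\sh^2(\mu)$, and $\delta^C_{\sh^2(M)}$, with the latter three already known to be isomorphisms (the third since $\sh^2$ preserves isomorphisms in $\dcat[R]$). Hence $\delta^C_M$ is an isomorphism as well. The only mildly subtle point is verifying $\sh(M)\in\dcatfg R$; otherwise the argument is formal. The main conceptual obstacle is noticing that the triality criterion must be invoked at the level of $\sh^2(M)$ and then transported back to $M$ via naturality of $\delta^C$, rather than being applied directly to $M$ and hoping to promote $\sh(\delta^C_M)$ being an isomorphism into $\delta^C_M$ being one.
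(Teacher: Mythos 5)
Your proof is correct and follows essentially the same route as the paper: one invocation of the triality criterion \ref{triality} to get $\delta^C_{\sh(M)}$ (and hence $\delta^C_{\sh^2(M)}$) invertible, followed by the naturality square for $\delta^C$ along $\mu$ to transport the conclusion back to $\delta^C_M$. The only cosmetic difference is that you make the second application of \ref{triality} explicit where the paper compresses it into a ``hence.''
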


\begin{proof}
Set $\sh=\Rhom R-C$. Note that $\sh(M)$ is in $\dcatn R$ because $C$ and
$M$ are in $\dcatb R$. The morphism $\mu$ induces an isomorphism
$\hh{\sh^{3}(M)}\cong\hh{\sh(M)}$.  Each $R$-module
$\HH n{\sh(M)}=\Ext{-n}RMC$ is finite, so we conclude from \ref{triality} that
$\delta^{C}_{\sh(M)}$ is an isomorphism in $\dcat[R]$, hence
$\delta^{C}_{\sh^{2}(M)}$ is one as well.  The square
\[
\xymatrixrowsep{2.5pc}
\xymatrixcolsep{3pc}
\xymatrix{ M\ar@{->}[r]^{\mu}_{\simeq}
       \ar@{->}[d]_{\delta^{C}_M}
       & \sh^{2}(M) \ar@{->}[d]_{\simeq}^{\delta^{C}_{\sh^{2}(M)}}\\
       \sh^2(M)\ar@{->}[r]^{\sh^2(\mu)}_{\simeq} & \sh^{4}(M)}
\]
in $\dcat[R]$ commutes and implies that $\delta^{C}_M$ is an  
isomorphism,
as desired.
  \end{proof}

\begin{proof}[Proof of Theorem~\emph{\ref{thm:refl}}]
(i)$\implies$(ii).  This follows from \ref{triality} and \ref{support}.

(ii)$\implies$(i). Set $\sh=\Rhom R-C$ and form the exact triangle
in $\dcat[R]$:
\[
M\xra{\ \delta^{C}_{M}\ } \sh^{2}(M)\lra N \lra
\]
As $\sh(M)$ is $C$-reflexive, one has $\sh^{2}(M)\in\dcatb R$,
so the exact triangle above implies that $N$ is in $\dcatb R$. Since
$\Supp_{R}M\subseteq \Supp_{R}C$ holds, using \ref{support} one
obtains
  \begin{align*}
\Supp_{R}N &\subseteq \Supp_{R} M\cup\Supp_{R}\sh^{2}(M)\\
&=\Supp_{R} M\cup(\Supp_{R}M\cap\Supp_RC) \subseteq \Supp_{R}C\,.
  \end{align*}
On the other hand, the exact triangle above induces an exact triangle
\[
\sh(N) \lra \sh^{3}(M)\xra{\ \sh(\delta^{C}_{M})\ } \sh(M) \lra
\]
Since $\sh(M)$ is $C$-reflexive $\delta^{C}_{\sh(M)}$ is an isomorphism, so \ref{triality} shows that $\sh(\delta_M^{C})$ is an isomorphism as well. The second exact triangle now  gives $\hh{\sh(N)}=0$. The
already established inclusion $\Supp_{R}N\subseteq \Supp_{R}C$ and \ref{support} yield
\[
\Supp_RN=\Supp_{R}N\cap\Supp_{R}C =\Supp_{R}\Rhom RNC=\varnothing\,.
\]
This implies $N=0$ in $\dcat[R]$, and hence $\delta^{C}_{M}$ is an isomorphism.

(i)$\implies$(iv).
This is a consequence of the hypothesis $\Rhom RMC\in\dcatp R$.

(iv)$\implies$(iii).  With $U=\{1\}$ the hypotheses in (iv) implies $\Rhom RMC$ is in $\dcatp R$, while the isomorphism in (iii) is the special case $U=R\setminus \fm$.

(iii)$\implies$(i). For each $\fm\in\Max R$, Proposition~\ref{prop:foxby} yields that $\delta^{C_\fm}_{M_\fm}$ is an isomorphism, in $\dcat[R_{\fm}]$.  One has a  canonical isomorphism
  \[
\lambda_\fm\col\Rhom R{\Rhom RMC}C_\fm\xra{\,\simeq\,}
\Rhom{R_\fm}{\Rhom{R_\fm}{M_\fm}{C_\fm}}{C_\fm}
  \]
because $\Rhom RMC$ is in $\dcatp R$ and $M$ is in $\dcatb R$. Now using the equality $\delta^{C_\fm}_{M_\fm}=\lambda_\fm(\delta^C_M)_\fm$ one sees that $(\delta^C_M)_\fm$ is an isomorphism, and hence so is $\delta^C_M$.
  \end{proof}

\section{Semidualizing complexes}
\label{Semidualizing complexes}

For each complex $C$ there is a canonical \emph{homothety morphism}
 \begin{equation}
   \label{eq:homothety}
\chi^C\col R\to\Rhom RCC\quad\text{in}\quad \dcat[R]
 \end{equation}
induced by $r\mapsto(c\mapsto rc)$.
As in \cite[2.1]{Ch2}, we say that $C$ is \emph{semidualizing} if it  
is in
$\dcatb R$ and $\chi^C$ an isomorphism.  We bundle convenient
recognition criteria in:

 \begin{proposition}
  \label{prop:semidualizing}
For a complex $C$ in $\dcatb R$ the following are equivalent:
  \begin{enumerate}[\rm\quad(i)]
 \item
$C$ is semidualizing.
 \item[\rm(i$'$)]
$R$ is derived $C$-reflexive.
 \item
$C$ is derived $C$-reflexive and $\Supp_{R}C=\Spec R$.
 \item
For each $\fm\in\Max R$ there is an isomorphism
  \[
R_{\fm}\simeq \Rhom{R_\fm}{C_\fm}{C_\fm}
\quad\text{in}\quad \dcat[R_{\fm}]\,.
  \]
  \item
$U^{-1}C$ is semidualizing for $U^{-1}R$ for each multiplicatively  
closed set $U\subseteq R.$
   \end{enumerate}
 \end{proposition}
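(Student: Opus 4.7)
The plan is to derive the entire proposition from Theorem~\ref{thm:refl} applied to the module $M = R$, using just one preliminary identification.

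That identification is the canonical isomorphism $\Rhom R R C \simeq C$, under which the biduality morphism $\delta^C_R \colon R \to \Rhom R{\Rhom R R C}C$ agrees with the homothety $\chi^C \colon R \to \Rhom R C C$. Consequently $R$ is derived $C$-reflexive precisely when $\chi^C$ is an isomorphism, yielding (i)$\iff$(i$'$).

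Next I would feed $M = R$ into Theorem~\ref{thm:refl} and interpret each of its four equivalent conditions. Its condition (i) is exactly (i$'$) here. Its condition (ii) becomes ``$\Rhom R R C \simeq C$ is derived $C$-reflexive and $\Supp_R R \subseteq \Supp_R C$''; since $\Supp_R R = \Spec R$ always, this collapses to ``$C$ is derived $C$-reflexive and $\Supp_R C = \Spec R$'', namely our (ii). Its condition (iii) becomes ``$\Rhom R R C \simeq C \in \dcatP R$'' (automatic, as $C$ is homologically finite) together with the local isomorphisms $R_\fm \simeq \Rhom{R_\fm}{C_\fm}{C_\fm}$ for each $\fm \in \Max R$, which is our (iii). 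Finally, its condition (iv) asserts that $U^{-1}R$ is derived $U^{-1}C$-reflexive for every multiplicatively closed $U \subseteq R$; applying the equivalence (i)$\iff$(i$'$) already established to the ring $U^{-1}R$ and complex $U^{-1}C$ identifies this with our (iv).

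The main obstacle is checking the preliminary identification $\delta^C_R = \chi^C$ under $\Rhom R R C \simeq C$, which is purely a matter of unpacking the sign conventions in the definitions of the two canonical maps given in \eqref{eq:biduality} and \eqref{eq:homothety}. Once this is settled, the whole proposition is a direct translation of Theorem~\ref{thm:refl} for $M = R$, with no further work required.
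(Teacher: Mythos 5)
Your proposal is correct and follows the same route as the paper: the equivalence (i)$\iff$(i$'$) is obtained by factoring $\chi^C$ through $\delta^C_R$ via the canonical isomorphism $C\simeq\Rhom RRC$, and the remaining equivalences are read off from Theorem~\ref{thm:refl} with $M=R$. Your extra bookkeeping (e.g.\ that $\Supp_RR=\Spec R$ forces $\Supp_RC=\Spec R$, and that condition (iv) of Theorem~\ref{thm:refl} translates into (iv) here via the already-proved (i)$\iff$(i$'$) over $U^{-1}R$) is exactly what the paper leaves implicit.
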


 \begin{proof}
To see that (i) and (i$'$) are equivalent, decompose $\chi^C$ as
  \[
R\xra{\,\delta^C_R\,}\Rhom R{\Rhom RRC}C
\xra{\,\simeq\,}\Rhom RCC
  \]
with isomorphism induced by the canonical isomorphism
$C\xra{\simeq}\Rhom RRC$.  Conditions (i$'$) through (iv) are equivalent
by Theorem \ref{thm:refl} applied with $M=R$.
 \end{proof}

Next we establish a remarkable property of semidualizing complexes.   
It uses the invariant $\rfd R{(-)}$ discussed in Section~\ref{Depth}.

\begin{theorem}
\label{semid:dualfinite}
If $C$ is a semidualizing complex for $R$ and $L$ is a complex in $ 
\dcatn R$
with $\Rhom RLC\in\dcatb R$, then $L$ is in $\dcatb R$; more  
precisely, one
has
  \begin{align*}
\inf \hh L&\geq\inf \hh C - \rfd R{\Rhom RLC}>-\infty\,.
  \end{align*}
\end{theorem}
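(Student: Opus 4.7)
The second inequality $\rfd R{\Rhom RLC} > -\infty$ is an immediate consequence of Theorem~\ref{thm:rfd} (which gives $\rfd R{\Rhom RLC} < \infty$) and the finiteness of $\inf \hh C$.

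For the main inequality, write $M = \Rhom RLC$ and assume $L \neq 0$; set $t = \inf \hh L$. The plan is to choose a prime $\fp^* \in \Spec R$ that is minimal in $\Supp \HH{t}{L}$, and to prove the sharper local inequality $\inf \hh{L_{\fp^*}} \geq \inf \hh{C_{\fp^*}} - (\depth R' - \depth_{R'}{M_{\fp^*}})$ over the local ring $R' := R_{\fp^*}$. Passing to infima --- using $\inf \hh L = \inf \hh{L_{\fp^*}}$ (from the choice of $\fp^*$), $\inf \hh C \leq \inf \hh{C_{\fp^*}}$, and $\rfd R M \geq \depth R' - \depth_{R'}{M_{\fp^*}}$ --- will then deliver the theorem.

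To establish the local inequality, set $k := k(\fp^*)$ and $V' := L_{\fp^*} \dtensor{R'} k$. The tensor--hom adjunction gives $\Rhom{R'}{k}{M_{\fp^*}} \simeq \Rhom{R'}{V'}{C_{\fp^*}}$. I plan to decompose the $k$-complex $V'$ as a direct sum of shifts of $k$, apply the contravariant functor $\Rhom{R'}{-}{C_{\fp^*}}$ to convert this into a direct product, and carry out a degree-wise calculation (using exactness of products in $R'$-modules) to obtain $\sup \hh{\Rhom{R'}{V'}{C_{\fp^*}}} = \sup \hh{\Rhom{R'}{k}{C_{\fp^*}}} - \inf \hh{V'}$. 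Combining this with $\sup \hh{\Rhom{R'}{k}{C_{\fp^*}}} = -\depth_{R'}{C_{\fp^*}}$ and the identity $\depth_{R'}{C_{\fp^*}} = \depth R' - \inf \hh{C_{\fp^*}}$ for the semidualizing $C_{\fp^*}$ --- itself a consequence of the very same computation applied to the isomorphism $R' \simeq \Rhom{R'}{C_{\fp^*}}{C_{\fp^*}}$, together with ordinary Nakayama for the finite-homology complex $C_{\fp^*}$ --- yields $\depth_{R'}{M_{\fp^*}} = \depth R' - \inf \hh{C_{\fp^*}} + \inf \hh{V'}$, so the sharper local inequality reduces to $\inf \hh L \geq \inf \hh{V'}$.

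The reverse inequality $\inf \hh{V'} \geq t$ comes for free from the universal-coefficient spectral sequence $\Tor{p}{R'}{k}{\HH{q}{L_{\fp^*}}} \Rightarrow \HH{p+q}{V'}$, so the local bound comes down to the Nakayama-type equality $\inf \hh{V'} = t$. This is the principal obstacle: by minimality of $\fp^*$, $(\HH{t}{L})_{\fp^*}$ is a non-zero $\fp^*R'$-torsion $R'$-module, and one has to rule out $\fp^*R'$-divisibility so that $(\HH{t}{L})_{\fp^*} \otimes_{R'} k \neq 0$. I expect this to follow from the hypothesis $\Rhom RLC \in \dcatb R$, which should preclude the kind of pathological divisibility that occurs, for example, for the injective hull $E(k)$ over a local domain (where $\Rhom R{E(k)}{R} \notin \dcatb R$).
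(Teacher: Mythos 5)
Your local computation is, at bottom, the same one the paper performs: the identity $\depth_{R'}M_{\fp^*}=\inf\hh{L_{\fp^*}}+\depth_{R'}C_{\fp^*}$ obtained from $\Rhom{R'}{k}{M_{\fp^*}}\simeq\Rhom{R'}{V'}{C_{\fp^*}}$ is the second equality of Lemma~\ref{lem:bass} (applied with $M=L$, $N=C$), and eliminating $\depth_{R'}C_{\fp^*}$ via $R'\simeq\Rhom{R'}{C_{\fp^*}}{C_{\fp^*}}$ is the paper's second application of that lemma with $M=C=N$. The genuine gap is at the very first step: you set $t=\inf\hh L$ and choose $\fp^*\in\Supp\HH tL$, which presupposes $t>-\infty$. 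But $L\in\dcatn R$ only gives boundedness \emph{above}; the assertion that $L$ lies in $\dcatb R$, i.e.\ that $\inf\hh L$ is finite, is precisely the nontrivial content of the theorem, so your argument is circular. If $\inf\hh L=-\infty$ there is no bottom homology module to localize at and the scheme cannot start. In the paper this is where the \emph{first} assertion of Lemma~\ref{lem:bass} does real work: from $\Rhom RLC\in\dcatN R$ and the adjunction isomorphism one deduces that $k\dtensor{R_\fm}L_\fm$ is in $\dcatP{R_\fm}$, and then the converse-Nakayama statement \ref{finite:test} yields $L_\fm\in\dcatp{R_\fm}$; only afterwards does one take the infimum of the resulting uniform bounds over all $\fm\in\Max R$. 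Your product decomposition of $\Rhom{R'}{V'}{C_{\fp^*}}$ does contain the germ of this (homology of $V'$ in arbitrarily negative degrees would force homology of $\Rhom{R'}{k}{M_{\fp^*}}$ in arbitrarily positive degrees, contradicting $M\in\dcatb R$), but you would still need \ref{finite:test} to pass from boundedness of $V'$ back to boundedness of $L_{\fp^*}$, and you would need to run the argument at every maximal ideal rather than at a single prime chosen using the (unproved) attainment of $\inf\hh L$.

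By contrast, the step you flag as ``the principal obstacle'' is not one. Since $L$ is in $\dcatn R$, every $\HH nL$ is a finite $R$-module; hence $(\HH tL)_{\fp^*}$ is a nonzero finitely generated module over the local ring $R'$, and ordinary Nakayama gives $(\HH tL)_{\fp^*}\otimes_{R'}k\ne0$ outright --- no divisibility pathology of the $E(k)$ type can occur for finitely generated modules, and the minimality of $\fp^*$ is not even needed for this point. So as written the proposal leaves open a step that is immediate, while assuming the step that actually requires an argument.
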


\begin{proof}
For each $\fm\in\Max R\cap\Supp_RL$ one has a chain of relations
  \begin{align*}
\inf\hh{L_{\fm}}
&=  - \depth_{R_{\fm}}C_{\fm} + \depth_{R_{\fm}}{\Rhom RLC}_{\fm}\\
& = \inf \hh{C_{\fm}} - \depth R_{\fm} + \depth_{R_{\fm}}{\Rhom RLC}_ 
{\fm}\\
&\geq \inf \hh C - \rfd R{\Rhom RLC}\\
&>-\infty
  \end{align*}
with equalities given by Lemma \ref{lem:bass}, applied first with $M=L$ and $N=C$, then with $M=C=N$; the first inequality is clear, and the second one holds by Theorem~\ref{thm:rfd}.  Now use the equality $\inf\hh L=\inf_{\fm\in\Max R}\{\inf\hh{L_\fm}\}$.
  \end{proof}

The next theorem parallels~Theorem~\ref{thm:refl}. The impact of the hypothesis that $C$ is semidualizing can be seen by comparing condition (iii) in these results: one need not assume $\Rhom RMC$ is bounded.  In particular, reflexivity with respect to a semidualizing complex can now be \emph{defined} by means of property (i$'$) alone. Antecedents of the theorem are discussed in \ref{notes:semid}.

   \begin{theorem}
     \label{thm:bounded-reflexive}
Let $C$ be a semidualizing complex for $R$.

For a complex $M$ in $\dcatb R$ the following conditions are equivalent:
\begin{enumerate}[\quad\rm(i)]
  \item
$M$ is derived $C$-reflexive.
  \item[\rm(i$'$)]
There exists an isomorphism $M\simeq\Rhom R{\Rhom RMC}C$.
  \item
$\Rhom RMC$ is derived $C$-reflexive.
   \item
For each $\fm\in\Max R$ there is an isomorphism
  \[
M_{\fm}\simeq \Rhom{R_{\fm}}{\Rhom{R_{\fm}}{M_{\fm}}{C_{\fm}}}{C_{\fm}}
\quad\text{in}\quad \dcat[R_{\fm}]\,.
  \]
  \end{enumerate}
Furthermore, these conditions imply the following inequalities
  \[
\amp\hh{\Rhom RMC}\le\amp \hh C-\inf \hh M+\rfd RM <\infty\,.
  \]
 \end{theorem}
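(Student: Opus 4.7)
The plan is to establish the cycle (i) $\Leftrightarrow$ (i$'$), (i) $\Leftrightarrow$ (ii), (i) $\Leftrightarrow$ (iii), and then read off the amplitude bound. The implications (i) $\Rightarrow$ (i$'$) (take $\mu := \delta^C_M$) and (i) $\Rightarrow$ (iii) (localize $\delta^C_M$) are immediate. The main point---absent in Theorem \ref{thm:refl}---is that the semidualizing hypothesis on $C$ lets one \emph{deduce} the homological finiteness of $\Rhom RMC$ rather than assume it, and Theorem \ref{semid:dualfinite} is precisely the tool that does this.

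For (i$'$) $\Rightarrow$ (i), Proposition \ref{prop:foxby} already gives that $\delta^C_M$ is an isomorphism. To secure $\Rhom RMC \in \dcatb R$, set $L := \Rhom RMC$: since $M, C \in \dcatb R$ one has the standard upper bound $\sup \hh L \leq \sup \hh C - \inf \hh M$, so $L \in \dcatN R$; the hypothesis furnishes $\Rhom RLC \simeq M \in \dcatb R$, so Theorem \ref{semid:dualfinite} applies and yields $L \in \dcatb R$ together with $\inf \hh L \geq \inf \hh C - \rfd R M$. The equivalence (i) $\Leftrightarrow$ (ii) then reduces directly to the corresponding one in Theorem \ref{thm:refl}, since $\Supp_R C = \Spec R$ by Proposition \ref{prop:semidualizing} makes the support containment there automatic.

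The substantive step is (iii) $\Rightarrow$ (i), and this is where the main obstacle lies: one cannot meaningfully localize $\delta^C_M$ against $\Rhom R{\Rhom RMC}C$ without first knowing $\Rhom RMC$ is bounded, yet that is precisely the conclusion we are after. The way out is a bootstrap via Theorem \ref{semid:dualfinite} applied locally. For each $\fm \in \Max R$, the complex $C_\fm$ is semidualizing over $R_\fm$ by Proposition \ref{prop:semidualizing}, and the hypothesis supplies exactly the local form of (i$'$) needed to apply the already-proved (i$'$) $\Rightarrow$ (i) over $R_\fm$. This shows $M_\fm$ is derived $C_\fm$-reflexive and delivers the estimate $\inf \hh{\Rhom{R_\fm}{M_\fm}{C_\fm}} \geq \inf \hh{C_\fm} - \rfd{R_\fm}{M_\fm} \geq \inf \hh C - \rfd R M$. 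The key point is that this lower bound is \emph{uniform} in $\fm$; combined with the natural identification $(\Rhom RMC)_\fm \simeq \Rhom{R_\fm}{M_\fm}{C_\fm}$, which holds because $M$ is homologically finite, taking infima over $\Max R$ yields $\inf \hh{\Rhom RMC} \geq \inf \hh C - \rfd R M > -\infty$ by Theorem \ref{thm:rfd}. With $\Rhom RMC$ now in $\dcatb R$, the second half of the argument of Theorem \ref{thm:refl}(iii) $\Rightarrow$ (i) transfers verbatim to conclude $\delta^C_M$ is an isomorphism. Finally, the amplitude inequality combines the lower bound just derived with the upper bound $\sup \hh{\Rhom RMC} \leq \sup \hh C - \inf \hh M$, finiteness being ensured by Theorem \ref{thm:rfd}.
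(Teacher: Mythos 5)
Your proof is correct and follows essentially the same route as the paper's: both hinge on Theorem \ref{semid:dualfinite} to supply the boundedness of $\Rhom RMC$ (globally for (i$'$), locally and uniformly over $\Max R$ for (iii)), and both then fall back on Theorem \ref{thm:refl} and Proposition \ref{prop:foxby}, with the paper merely arranging the cycle as (i)$\Rightarrow$(i$'$)$\Rightarrow$(iii)$\Rightarrow$(i) instead of your pairwise equivalences. One small point of precision: to invoke Theorem \ref{semid:dualfinite} you need $L=\Rhom RMC$ to lie in $\dcatn R$, not merely $\dcatN R$, but the degreewise finiteness of $\hh L$ is automatic from $M\in\dcatp R$ and $C\in\dcatn R$.
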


\begin{proof}
(i)$\iff$(ii). Apply Theorem~\ref{thm:refl}, noting that $\Supp_RC=\Spec R$, by \ref{support}.

(i)$\implies$(i$'$).  This implication is a tautology.

(i$'$)$\implies$(iii).  This holds because Theorem~\ref{semid:dualfinite},
applied with $L=\Rhom RMC$, shows that $\Rhom RMC$ is bounded, and
so the given isomorphism localizes.

(iii)$\implies$(i).
For each $\fm\in\Max R$ the complex $C_\fm$ is semidualizing for $R_\fm$ by Proposition \ref{prop:semidualizing}.  One then has a chain of (in)equalities
  \begin{align*}
\inf\hh{\Rhom RMC}
&=\inf_{\fm\in\Max R}\{\inf\hh{\Rhom RMC_\fm}\}
 \\
&=\inf_{\fm\in\Max R}\{\inf\hh{\Rhom{R_\fm}{M_\fm}{C_\fm}}\}
 \\
&\ge\inf_{\fm\in\Max R}\{\inf\hh{C_{\fm}}-\rfd{R_{\fm}}{M_\fm}\}
 \\
&\ge\inf\hh{C}-\sup_{\fm\in\Max R}\{\rfd{R_{\fm}}{M_\fm}\} 
 \\
&=\inf\hh{C}-\rfd{R}{M}\\
&>-\infty\,,
  \end{align*}
where the first inequality comes from Theorem \ref{semid:dualfinite} applied over $R_\fm$ to the complex $L=\Rhom{R_\fm}{M_\fm}{C_\fm}$, while the last inequality is given by Theorem \ref{thm:rfd}.  
It now follows from Theorem~\ref{thm:refl} that $M$ is derived $C$-reflexive.

The relations above and \ref{derived-functors} yield the desired bounds on amplitude.
  \end{proof}

 \begin{notes}
   \label{notes:semid}
The equivalence (i)$\iff$(ii) in Theorem \ref{thm:bounded-reflexive}
follows from \cite[2.1.10]{Ch1} and \cite[2.11]{Ch2}; see \cite[3.3]{FST}.
When $\dim R$ is finite, a weaker form of (iii)$\implies$(i) is proved in
\cite[2.8]{FS}: $\delta^{C_\fm}_{M_\fm}$ an isomorphism for all 
$\fm\in\Max R$ implies that $\delta^C_M$ is one.

When $R$ is Cohen-Macaulay and local each semidualizing complex $C$ satisfies 
$\amp\hh C=0$, so it is isomorphic to a shift of a finite module; see~\cite[3.4]{Ch2}.
 \end{notes}

\section{Perfect complexes}
\label{Perfect complexes}

Recall that a complex of $R$-modules is said to be \emph{perfect} if it is isomorphic in $\dcat[R]$ to a bounded complex of finite projective modules.  For ease of reference we collect, with complete proofs, some useful tests for perfection; the equivalence of (i) and (ii) is contained in \cite[2.1.10]{Ch1}, while the argument that (i) are (iii) are equivalent is modelled on a proof when $M$ is a module, due to Bass and Murthy~\cite[4.5]{BM}.

\begin{theorem}
\label{thm:perfect}
For a complex $M$ in $\dcatb R$ the following conditions are equivalent.
\begin{enumerate}[\quad\rm(i)]
  \item
$M$ is perfect.
  \item
$\Rhom RMR$ is perfect.
  \item
$M_\fm$ is perfect in $\dcat[R_\fm]$ for each $\fm\in\Max R$.
  \item[\rm(iii$'$)]
$P^{R_\fm}_{M_\fm}(t)$ is a Laurent polynomial for each $\fm\in\Max R$.
  \item
$U^{-1}M$ is perfect in $\dcat[U^{-1}R]$ for each multiplicatively
closed set $U\subseteq R$.
  \end{enumerate}
    \end{theorem}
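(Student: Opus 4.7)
The plan is to close the cycle (i) $\implies$ (ii) $\implies$ (i), (i) $\implies$ (iv) $\implies$ (iii), (iii) $\iff$ (iii$'$), and (iii) $\implies$ (i). The equivalence (i) $\iff$ (ii) will rest on Theorem \ref{thm:bounded-reflexive} applied with the semidualizing complex $C=R$; the localization implications are routine; (iii) $\iff$ (iii$'$) is a local criterion recorded in the appendix; and the main obstacle is (iii) $\implies$ (i), which I would handle by a Bass--Murthy noetherian induction on $\Spec R$.

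For (i) $\implies$ (ii), dualizing a bounded complex of finite projectives by $\Hom R{-}R$ yields another such complex, which computes $\Rhom RMR$. For (ii) $\implies$ (i), applying the previous observation to $\Rhom RMR$ shows $\Rhom R{\Rhom RMR}R$ is perfect. Biduality $\delta^R_N\col N\to\Rhom R{\Rhom RNR}R$ is an isomorphism for every perfect complex $N$, by reduction to the case of a single finite projective module; hence $\Rhom RMR$ is derived $R$-reflexive. Since $R$ is trivially semidualizing for itself, the direction (iii) $\implies$ (i) of Theorem \ref{thm:bounded-reflexive}, with $C=R$, yields that $M$ is derived $R$-reflexive; consequently $M\simeq\Rhom R{\Rhom RMR}R$ is perfect.

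The implication (i) $\implies$ (iv) holds because a bounded complex of finite projective $R$-modules localizes to one over $U^{-1}R$, and (iv) $\implies$ (iii) is the special case $U=R\setminus\fm$. For (iii) $\iff$ (iii$'$), working locally, a complex in $\dcatb{R_\fm}$ with finite homology is perfect if and only if its Poincar\'e series is a Laurent polynomial; this follows from the existence of minimal semi-free resolutions over a noetherian local ring and is recorded in Appendix \ref{sec:pbseries}.

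The main obstacle is (iii) $\implies$ (i). Choose a quasi-isomorphism $F\to M$ with $F$ a complex of finite free $R$-modules satisfying $F_i=0$ for $i\ll 0$. For each integer $n>\sup\hh M$, set $Z_n=\Ker(F_n\to F_{n-1})$, a finitely generated $R$-module; then for each prime $\fp$, the condition $\pd_{R_\fp} M_\fp\le n+1$ is equivalent to $(Z_n)_\fp$ being $R_\fp$-projective. Since the projective locus of a finitely generated module over a noetherian ring is open in $\Spec R$, the subset
\[
U_n=\{\fp\in\Spec R\mid (Z_n)_\fp\text{ is projective over }R_\fp\}
\]
is open, and $U_n\subseteq U_{n+1}$. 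Hypothesis (iii), combined with the fact that projective dimension does not increase under localization, gives $\bigcup_n U_n=\Spec R$. Since $R$ is noetherian, $\Spec R$ is a noetherian topological space, so the ascending chain $U_n$ stabilizes and $U_n=\Spec R$ for some $n$. Then $Z_n$ is a finitely generated projective $R$-module, and inserting it in degree $n+1$ with the inclusion $Z_n\hookrightarrow F_n$ as the new differential yields a bounded complex of finite projectives quasi-isomorphic to $M$, proving $M$ is perfect.
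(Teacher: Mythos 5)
Your proposal is correct and follows essentially the same route as the paper: the same cycle of implications, the same use of minimal resolutions for (iii)$\iff$(iii$'$), reduction of (ii)$\implies$(i) to derived $R$-reflexivity via the results of Sections 2--3, and the same Bass--Murthy openness-plus-noetherian-induction argument for (iii)$\implies$(i) (your kernels $Z_n$ coincide with the paper's syzygy images since $\hh F$ vanishes in those degrees). The only blemish is a citation slip in (ii)$\implies$(i): the implication you invoke from Theorem~\ref{thm:bounded-reflexive} is its (ii)$\implies$(i), not (iii)$\implies$(i); the paper instead uses the corresponding implication of Theorem~\ref{thm:refl} with $C=R$, where $\Supp_RM\subseteq\Supp_RR$ is automatic.
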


 \begin{proof}
(iv)$\implies$(iii).
This implication is a tautology.

(iii)$\implies$(i).  Choose a resolution $F\tiso M$ with each $F^{i}$ finite
free and zero for $i\ll 0$. Set $s=\sup\hh F+1$ and $H=\image(\dd^F_{s})$, and
note that the complex $\Shift^{-s}F_{\ges s}$ is a free resolution
of $H$. Since each $R$-module $\image(\dd^F_n)$ is finite, the subset
of primes $\fp\in\Spec R$ with $\image(\dd^F_n)_{\fp}$ projective over
$R_{\fp}$ is open.  It follows that the set
 \[
D_{n}= \{\fp\in \Spec R\mid \pd_{R_{\fp}}H_\fp\le n\}
 \]
is open in $\Spec R$ for every $n\ge 0$.  One has $D_n\subseteq D_{n+1}$
for $n\ge0$, and the hypothesis means $\bigcup_{n\ges0}D_{n}=\Spec R$.
As $\Spec R$ is noetherian, it follows that $D_p=\Spec R$ holds for
some $p\ge0$, so that $\image(\dd^F_{s+p})$ is projective. Taking  
$E_n=0$
for $n>s+p$, $E_{s+p}=\image(\dd^F_{s+p})$, and $E_n=F_n$ for $n<s+p$
one gets a perfect subcomplex $E$ of $F$.  The inclusion $E\to F$ is a
quasi-isomorphism, so $F$ is perfect.

(i)$\implies$(iv) and (i)$\implies$(ii).  In $\dcat[R]$ one has $M\simeq F$ with
$F$ a bounded complex of finite projective $R$-modules.  This implies
isomorphisms $U^{-1}M\simeq U^{-1}F$ in $\dcat[U^{-1}R]$ and 
$\Rhom RMR\simeq\Rhom RFR$ in $\dcat[R]$, with  bounded 
complexes of finite projective modules on their right hand sides.

(ii)$\implies$(i).  The perfect complex $N=\Rhom RMR$ is evidently derived 
$R$-reflexive, so the implication (ii)$\implies$(i) in Theorem~\ref{thm:refl} 
applied with $C=R$ gives $M\simeq\Rhom RNR$; as we have just seen, 
$\Rhom RNR$ is perfect along with $N$.

(iii)$\iff$(iii$'$).
We may assume that $R$ is local with maximal ideal $\fm$.  By \ref{minimal}, 
there is an isomorphism $F\simeq M$ in $\dcat[R]$, with each $F_{n}$ finite 
free, $\dd(F)\subseteq \fm F$, and $P^R_M(t)=\sum_{n\in\BZ}\rank_RF_nt^n$.  
Thus, $M$ is perfect if and  only if $F_{n}=0$ holds for all $n\gg 0$; that is, 
if and only if $P^R_M(t)$ is a Laurent polynomial.
  \end{proof}

The following elementary property of perfect complexes is well known:

\begin{chunk}
\label{ch:perfect-tensor}
If $M$ and $N$ are perfect complexes, then so are $M\dtensor RN$ and 
$\Rhom RMN$.
 \end{chunk}

To prove a converse we use a version of a result from \cite{FI},
which incorporates a deep result in commutative algebra, namely, the New 
Intersection Theorem.

\begin{theorem}
\label{thm:perfect-bounds}
When $M$ is a perfect complex of $R$-modules and $N$ a complex in $\dcatfg R$
satisfying $\Supp_{R}N\subseteq \Supp_{R}M$, the following inequalities hold:
\begin{align*}
\sup\hh N  &\leq \sup\hh{M\dtensor RN} - \inf \hh M \\
\inf\hh N  &\geq \inf \hh{M\dtensor RN} -  \sup \hh M \\
\amp \hh N & \leq \amp \hh{M\dtensor RN} + \amp \hh M
\end{align*}
If $M\dtensor RN$ or $\Rhom RMN$ is in $\dcatB R$, then $N$ is in $\dcatb R$.
  \end{theorem}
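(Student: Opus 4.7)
My plan is to establish the two sharp inequalities on $\sup\hh N$ and $\inf\hh N$ separately---the amplitude bound is immediate from their difference, and the $\dcatb R$ statement is an easy corollary, once the $\Rhom$ case is reduced to the tensor case---by localizing a local statement that I will import from \cite{FI}.

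The essential local input is the following, which holds over any local ring $R$ for a nonzero perfect complex $F$ and a nonzero $N\in\dcatfg R$:
\[
\sup \hh{F \dtensor R N}\ \ge\ \sup \hh N + \inf \hh F \qquad\text{and}\qquad \inf \hh{F \dtensor R N}\ \le\ \inf \hh N + \sup \hh F.
\]
These are sharp forms of the New Intersection Theorem, and they are precisely what is furnished by \cite{FI}. This is where the real content sits, and would be the main obstacle were it not already handed to us by citation; with it in hand the remaining work is a localization argument.

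To deduce the first global inequality, I pick a maximal ideal $\fm$ with $\sup \hh{N_\fm} = \sup \hh N$; such an $\fm$ exists whenever $\hh N\ne 0$ because $\hh N$ is finite over $R$, and automatically $\fm\in\Supp N\subseteq \Supp M$, so $M_\fm$ is a nonzero perfect complex over $R_\fm$. Applying the local bound at $\fm$ gives
\[
\sup \hh{(M \dtensor R N)_\fm}\ \ge\ \sup \hh{N_\fm} + \inf \hh{M_\fm},
\]
and the elementary estimates $\sup \hh{X_\fm}\le \sup \hh X$ and $\inf \hh{M_\fm}\ge \inf \hh M$ (localization can only annihilate homology) upgrade this to the global statement. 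The second global inequality is the mirror-image argument: choose $\fm$ so that $\inf \hh{N_\fm} = \inf \hh N$, noting again that such $\fm$ lies in $\Supp M$, apply the dual local bound, and transfer back.

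For the final assertion, when $M$ is perfect, $M^{\vee}:=\Rhom RMR$ is again perfect by Theorem~\ref{thm:perfect}, and $\Supp M^{\vee}=\Supp M$ (the biduality $M^{\vee\vee}\simeq M$ forces equality of supports). The standard identification $\Rhom RMN\simeq M^{\vee}\dtensor R N$ then reduces the $\Rhom$ case to the tensor case. In that case, boundedness of $M\dtensor R N$, together with the two inequalities and the finiteness of $\sup \hh M$ and $\inf \hh M$ (automatic from perfection), forces $\sup \hh N$ and $\inf \hh N$ to be finite; since $N$ is already in $\dcatfg R$, this places $N$ in $\dcatb R$.
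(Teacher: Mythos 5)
Your proposal follows the paper's own route: localize, invoke the local bound of \cite[3.1]{FI} for the supremum, use the additivity of $\inf$ under derived tensor product over a local ring for the infimum, and reduce the $\Rhom$ assertion to the tensor assertion via $\Rhom RMN\simeq\Rhom RMR\dtensor RN$ with $\Supp_R\Rhom RMR=\Supp_RM$. One attributional point: only the supremum bound is a sharp form of the New Intersection Theorem; the infimum bound is the elementary Nakayama-type equality $\inf\hh{M_\fp\dtensor{R_\fp}N_\fp}=\inf\hh{M_\fp}+\inf\hh{N_\fp}$ of Lemma~\ref{lem:poincare}, which needs no perfection hypothesis.

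There is, however, one small but genuine circularity. You ``pick a maximal ideal $\fm$ with $\sup\hh{N_\fm}=\sup\hh N$,'' but such an $\fm$ exists only when that supremum is attained, i.e.\ when $\sup\hh N<\infty$; a complex in $\dcatfg R$ need not be bounded, and finiteness of $\sup\hh N$ is precisely what the last assertion of the theorem is meant to deliver, so it cannot be assumed when proving the inequality. (For instance, if $\hh N$ is nonzero in infinitely many positive degrees, no single maximal ideal need see the top of $\hh N$.) The repair is immediate and is what the paper does: for every $n$ with $\HH nN\ne0$ choose a maximal ideal $\fm$ in the support of the finite module $\HH nN$, deduce $\sup\hh{M\dtensor RN}\ \ge\ \sup\hh{(M\dtensor RN)_\fm}\ \ge\ n+\inf\hh{M_\fm}\ \ge\ n+\inf\hh M$ from the local bound, and take the supremum over such $n$; equivalently, use $\sup\hh N=\sup_{\fp\in\Supp_RN}\{\sup\hh{N_\fp}\}$ and bound each term uniformly. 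The mirror remark applies to your choice of $\fm$ attaining $\inf\hh{N_\fm}=\inf\hh N$. With this adjustment your argument is complete and coincides with the paper's.
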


\begin{proof}
For each $\fp$ in $\Supp_{R}N$ the complex $M_{\fp}$ is perfect and non-zero 
in $\dcat[R_{\fp}]$.

The second link in the following chain comes from \cite[3.1]{FI}, the rest 
are standard:
\begin{align*}
\sup{\hh N}_{\fp} 
                  &=\sup{\hh{N_{\fp}}} \\
                  &\le \sup\hh{M_{\fp} \dtensor{R_{\fp}}{N_{\fp}}} - \inf{\hh{M_{\fp}}}\\
                  &= \sup \hh{M\dtensor RN}_{\fp} -  \inf{\hh M}_{\fp} \\
                  &\leq \sup\hh{M\dtensor RN} - \inf {\hh M}
\end{align*}
The first inequality follows, as one has 
$\sup \hh N=\sup_{\fp\in\Supp N}\{\sup{\hh N}_{\fp}\}$.
 
Lemma \ref{lem:poincare} gives the second link in the next chain,
the rest are standard:
  \begin{align*}
\inf{\hh N}_{\fp} 
                  &=\inf{\hh{N_{\fp}}} \\
                  &= \inf\hh{M_{\fp} \dtensor{R_{\fp}}{N_{\fp}}} - \inf{{\hh M}_{\fp}} \\
                  &= \inf \hh{M\dtensor RN}_{\fp} - \inf{\hh M}_{\fp} \\
                  &\geq \inf\hh{M\dtensor RN} - \sup {\hh M}
\end{align*}
The second inequality follows, as one has $\inf{\hh N}=\inf_{\fp\in\Supp N}\{\inf{\hh N}_{\fp}\}$.

The first two inequalities imply the third one, which contains
the assertion concerning $M\dtensor RN$.  In turn, it implies 
the assertion concerning $\Rhom RMN$, because the complex
$\Rhom RMR$ is  perfect along with $M$, one has
  \[
\Supp_RN\subseteq\Supp_{R}M=\Supp_{R}\Rhom RMR
  \]
due  to  \ref{support}, and there is a canonical isomorphism
\[
\Rhom RMR\dtensor{R}N\simeq\Rhom RMN\,.
 \qedhere
\]
  \end{proof}

\begin{corollary}
\label{cor:perfect-tensor}
Let $M$ be a perfect complex and $N$ a complex in $\dcatfg R$ satisfying $\Supp_{R}N\subseteq \Supp_{R}M$. If $M\dtensor RN$ or $\Rhom RMN$ is perfect, then so is $N$.
 \end{corollary}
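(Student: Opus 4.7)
The plan is to reduce the Hom case to the tensor case and then verify perfection of $N$ locally through its Poincar\'e series, using Theorem~\ref{thm:perfect}(iii$'$).

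First, I would handle the reduction. Suppose $\Rhom RMN$ is perfect. By Theorem~\ref{thm:perfect}(i)$\Leftrightarrow$(ii), the dual $M^{*}:=\Rhom RMR$ is perfect, and the canonical isomorphism $\Rhom RMN\simeq M^{*}\dtensor RN$ shows $M^{*}\dtensor RN$ is perfect. Moreover, $\Supp_{R}M^{*}=\Supp_{R}M\supseteq\Supp_{R}N$ by \ref{support}. So, replacing $M$ by $M^{*}$, it suffices to handle the tensor case: $M$ perfect, $M\dtensor RN$ perfect, $\Supp_{R}N\subseteq\Supp_{R}M$, imply $N$ perfect.

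Next, Theorem~\ref{thm:perfect-bounds} (applied with $M\dtensor RN\in\dcatB R$) gives $N\in\dcatb R$. To prove $N$ is perfect, by Theorem~\ref{thm:perfect}(iii$'$) I would verify that $P^{R_{\fm}}_{N_{\fm}}(t)$ is a Laurent polynomial for every $\fm\in\Max R$. If $\fm\notin\Supp_{R}N$, then $N_{\fm}\simeq 0$, so the Poincar\'e series is zero. Otherwise $\fm\in\Supp_{R}M$, and both $M_{\fm}$ and $M_{\fm}\dtensor{R_{\fm}}N_{\fm}$ are perfect over $R_{\fm}$; hence $P^{R_{\fm}}_{M_{\fm}}(t)$ and $P^{R_{\fm}}_{M_{\fm}\dtensor{R_{\fm}}N_{\fm}}(t)$ are Laurent polynomials, with $P^{R_{\fm}}_{M_{\fm}}(t)\ne 0$. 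The K\"unneth-type formula in Lemma~\ref{lem:poincare} yields
\[
P^{R_{\fm}}_{M_{\fm}\dtensor{R_{\fm}}N_{\fm}}(t)
=P^{R_{\fm}}_{M_{\fm}}(t)\cdot P^{R_{\fm}}_{N_{\fm}}(t).
\]
Since $N_{\fm}\in\dcatb{R_{\fm}}$, a minimal free resolution shows that $P^{R_{\fm}}_{N_{\fm}}(t)$ is at worst a formal Laurent series bounded below, with nonnegative integer coefficients.

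The only nonroutine step is the concluding cancellation argument: if $A(t)$ is a nonzero Laurent polynomial with nonnegative integer coefficients, top-degree coefficient $\alpha_{b}>0$, and $B(t)$ is a formal Laurent series bounded below with nonnegative integer coefficients, then whenever $B(t)$ has a nonzero coefficient $\beta_{j}>0$ with $j$ arbitrarily large, the coefficient of $t^{b+j}$ in $A(t)B(t)$ is $\sum_{i+k=b+j}\alpha_{i}\beta_{k}\geq\alpha_{b}\beta_{j}>0$, with no possibility of cancellation. Hence if $A(t)B(t)$ is a Laurent polynomial, $B(t)$ must have bounded support above, i.e., be a Laurent polynomial itself. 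Applying this with $A=P^{R_{\fm}}_{M_{\fm}}$ and $B=P^{R_{\fm}}_{N_{\fm}}$ completes the proof. I do not anticipate any serious obstacle; the reduction is formal and the Poincar\'e series manipulation relies only on nonnegativity of coefficients.
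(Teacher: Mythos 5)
Your proposal is correct and follows essentially the same route as the paper: reduce the $\Rhom RMN$ case to the tensor case via $\Rhom RMN\simeq\Rhom RMR\dtensor RN$, get $N\in\dcatb R$ from Theorem~\ref{thm:perfect-bounds}, and then use Lemma~\ref{lem:poincare} together with the criterion of Theorem~\ref{thm:perfect}(iii$'$) at each maximal ideal. The only difference is that you spell out the divisibility step (that a Laurent series with nonnegative coefficients whose product with a nonzero Laurent polynomial with nonnegative coefficients is a Laurent polynomial must itself be one), which the paper leaves implicit; your argument for it is sound.
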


\begin{proof}
Suppose $M\dtensor RN$ is perfect; then  $N\in\dcatb R$ holds, by Theorem~\ref{thm:perfect-bounds}. For each $\fm\in\Max R$, Theorem~\ref{thm:perfect} and  Lemma~\ref{lem:poincare} imply that $P^{R_\fm}_{M_\fm}(t)P^{R_\fm}_{N_\fm}(t)$ is a Laurent polynomial, and hence so is $P^{R_{\fm}}_{N_{\fm}}(t)$.  Another  application of Theorem~\ref{thm:perfect} now shows that $N$ is perfect.

The statement about $\Rhom RMN$ follows from the one concerning derived tensor products, by using the argument for the last assertion of the theorem.
  \end{proof}

Next we establish a stability property of derived  reflexivity. The forward implication is well known; see, for instance, \cite[3.17]{Ch2}.

\begin{theorem}
\label{thm:perfect-reflexive}
Let $M$ be a perfect complex and $C$ a complex in $\dcatn R$.

If $N$ in $\dcat[R]$ is derived $C$-reflexive, then so is $M\dtensor RN$.

Conversely, for $N$ in $\dcatfg R$ satisfying $\Supp_{R}N\subseteq\Supp_{R}M$,
if $M\dtensor RN$ is derived $C$-reflexive, then so is $N$.
 \end{theorem}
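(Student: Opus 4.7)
The forward direction rests on the standard fact that, because $M$ is perfect (hence dualizable), there is for every complex $X$ a natural isomorphism $\Rhom RMR\dtensor RX\simeq\Rhom RMX$, together with the analogous isomorphism with $M$ replaced by the perfect complex $\Rhom RMR$ (see Theorem~\ref{thm:perfect}). Combined with tensor-hom adjunction and biduality for $M$, these produce a natural identification
\[
\Rhom R{\Rhom R{M\dtensor RN}C}C\simeq M\dtensor R\Rhom R{\Rhom RNC}C
\]
under which the biduality morphism $\delta^C_{M\dtensor RN}$ corresponds to $M\dtensor R\delta^C_N$. Assuming $N$ is derived $C$-reflexive, $M\dtensor RN$ lies in $\dcatb R$ since $M$ is perfect, and the same manipulations show $\Rhom R{M\dtensor RN}C\simeq\Rhom RMR\dtensor R\Rhom RNC$ lies in $\dcatb R$; since $\delta^C_N$ is an isomorphism by hypothesis, so is $\delta^C_{M\dtensor RN}$.

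For the converse, assume $M\dtensor RN$ is derived $C$-reflexive, $N\in\dcatfg R$, and $\Supp_RN\subseteq\Supp_RM$. Theorem~\ref{thm:perfect-bounds} applied to $M$ and $N$ immediately gives $N\in\dcatb R$, so that $\Rhom RNC$ has finite homology modules (via a projective resolution of $N$ by finite free modules). The isomorphism $\Rhom R{M\dtensor RN}C\simeq\Rhom RMR\dtensor R\Rhom RNC$ together with the support containment
\[
\Supp_R\Rhom RNC\subseteq\Supp_RN\subseteq\Supp_RM=\Supp_R\Rhom RMR
\]
allow a second application of Theorem~\ref{thm:perfect-bounds}, now to the perfect complex $\Rhom RMR$ and the complex $\Rhom RNC$, yielding $\Rhom RNC\in\dcatb R$.

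Finally, let $K$ denote the cone of $\delta^C_N\col N\to\Rhom R{\Rhom RNC}C$. Then $K$ lies in $\dcatfg R$ and the standard support inclusions force $\Supp_RK\subseteq\Supp_RN\subseteq\Supp_RM$. Tensoring the defining triangle of $K$ with $M$ and invoking the identification from the first paragraph, the resulting map $M\dtensor R\delta^C_N$ is identified with $\delta^C_{M\dtensor RN}$, which is an isomorphism by hypothesis. Thus $M\dtensor RK\simeq 0$, and a third application of Theorem~\ref{thm:perfect-bounds} forces $\hh K=0$, completing the argument. The main subtlety is the naturality check identifying $M\dtensor R\delta^C_N$ with $\delta^C_{M\dtensor RN}$ in $\dcat[R]$; this requires tracking the explicit construction of biduality through the chain of adjunction and dualizability isomorphisms.
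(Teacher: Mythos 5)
Your proposal is correct, and the converse half is essentially the paper's own argument: get $N\in\dcatb R$ from Theorem~\ref{thm:perfect-bounds}, get $\Rhom RNC\in\dcatb R$ from the identification $\Rhom R{M\dtensor RN}C\simeq\Rhom RMR\dtensor R\Rhom RNC$ and a second application of that theorem, then kill the cone of $\delta^C_N$ by tensoring with $M$ and using the support hypothesis. (The paper concludes $W=0$ from $M\dtensor RW=0$ via the support equalities of \ref{support} rather than a third appeal to Theorem~\ref{thm:perfect-bounds}, but these are interchangeable; it also explicitly records $\sh^2(N)\in\dcatb R$ before forming the triangle, a step you elide but do not actually need since $\dcatfg R$ suffices for the theorem you invoke.) Where you genuinely diverge is the forward direction: the paper argues by induction on the number of nonzero components of the bounded complex of finite projectives representing $M$, using that derived $C$-reflexivity is stable under shifts, sums, summands, and two-out-of-three in exact triangles; you instead push the natural isomorphism $\sh^2(M\dtensor RN)\simeq M\dtensor R\sh^2(N)$ and its compatibility with biduality through both directions at once. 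Your route is more uniform --- the same identification carries both implications --- but it front-loads the naturality check that $\delta^C_{M\dtensor RN}$ corresponds to $M\dtensor R\delta^C_N$, which you rightly flag as the delicate point; the paper's induction avoids that check in the forward direction but then relies on exactly the same compatibility (stated without proof around \eqref{eq:hsquare}) in the converse, so the two proofs carry the same unstated verification.
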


\begin{proof}
We may  assume that $M$ is a bounded complex of finite projective 
$R$-modules.  

Note that derived $C$-reflexivity is preserved by translation, 
direct sums, and direct summands, and that if two of the complexes in 
some exact triangle are derived $C$-reflexive, then so is the third. A 
standard induction on the number of non-zero components of $M$ 
shows that when $N$ is derived $C$-reflexive, so is $M\otimes_{R}N$.

Assume that $M\dtensor RN$ is derived $C$-reflexive and $\Supp_{R}N\subseteq\Supp_{R}M$ holds.
Theorem~\ref{thm:perfect-bounds} gives $N\in\dcatb R$. For the complex  $M^{*}=\Rhom RMR$ and the functor $\sh(-)=\Rhom R-C$, in $\dcat[R]$ there is a natural isomorphism
\[
M^{*}\dtensor R\sh(N)\simeq\sh(M\dtensor RN)\,.
  \]
Now $\sh(N)$ is in $\dcatfg R$ because $N$ is in $\dcatb R$ and $C$ is in $\dcatn R$, by \cite[p.\,92, 3.3]{H}. Since $M$ is perfect, one has that
  \[
\Supp_{R}\sh(N)\subseteq\Supp_{R}N\subseteq\Supp_{R}M=\Supp_{R}M^{*},
  \]
so Theorem~\ref{thm:perfect-bounds} gives $\sh(N)\in\dcatb R$.
Thus, $\sh^{2}(N)$ is in $\dcatfg R$, so the isomorphism
\begin{equation}
  \label{eq:hsquare}
M\dtensor R\sh^{2}(N)\simeq\sh^{2}(M\dtensor RN)
\end{equation}
and Theorem~\ref{thm:perfect-bounds} yield $\sh^2(N)\in\dcatb R$. 
Forming an exact triangle
\[
N\xra{\ \delta^{C}_{N}\ } \sh^{2}(N)\lra W \lra
\]
one then gets $W\in\dcatb R$ and $\Supp_{R}W\subseteq \Supp_{R}N$.

In the induced exact triangle
\[
M\dtensor RN\xra{\ M\dtensor R\delta^{C}_{N}\ } M\dtensor R\sh^{2}(N)
\lra M\dtensor RW \lra
\]
the morphism $M\dtensor{R}\delta^{C}_{N}$ is an isomorphism, as
its composition with the isomorphism in \eqref{eq:hsquare} is equal to 
$\delta^{C}_{M\dtensor RN}$, which is an isomorphism by hypothesis. 
Thus, we obtain $M\dtensor RW=0$ in $\dcat[R]$, hence $W=0$ by 
\ref{support}, so $\delta^{C}_{N}$ is a isomorphism.
  \end{proof}

Sometimes, the perfection of a complex can be deduced from its homology.

Let $H$ be a graded $R$-module.  We say that $H$ is (\emph{finite})
\emph{graded projective} if it is bounded and for each $i\in\BZ$ the 
$R$-module $H_i$ is  (finite) projective.  

  \begin{chunk}
    \label{ch:projhom}
If $M$ is a complex of $R$-modules such that $\hh M$ is projective, then $M\simeq\hh M$ in $\dcat[R]$, by \cite[1.6]{AILN}. Thus when $\hh M$ is in addition finite, $M$ is perfect.
  \end{chunk}

We recall some facts about projectivity and idempotents; see also \cite[2.5]{AI:mm}.

  \begin{chunk}
    \label{ch:projrank}
Let $H$ be a finite graded projective $R$-module.  

The $R_\fp$-module $(H_i)_\fp$ then is finite free for every $\fp\in\Spec R$ 
and every $i\in\BZ$, and one has $(H_i)_\fp=0$ for almost all $i$, so $H$ 
defines a function
  \[
r_{H}\col \Spec R\to\BN
  \quad\text{given by}\quad
r_{H}(\fp)=\sum_{i\in\BZ}\rank_{R_\fp}(H_i)_\fp\,.
  \]
One has $r_{H}(\fp)=\rank_{R_\fp}\big(\bigoplus_{i\in\BZ}H_i\big)_{\fp}$; since the $R$-module $\bigoplus_{i\in\BZ}H_i$ is finite projective, $r_{H}$ is constant on each connected component of $\Spec R$.

We say that $H$ \emph{has rank $d$}, and write $\rank_RH=d$, if  $r_{H}(\fp)=d$ holds for every $\fp\in\Spec R$.  We say that $H$ is \emph{invertible} if it is graded projective of rank $1$.
 \end{chunk}

\begin{chunk}
\label{ch:idempotents}
Let $\{a_1,\dots,a_s\}$ be the (unique) complete set of orthogonal primitive idempotents of $R$.  The open subsets $D_{a_i}=\{\fp\in\Spec R\mid\fp\not\ni a_i\}$ for $i=1,\dots,s$ are then the distinct connected components of $\Spec R$.

An element $a$ of $R$ is idempotent if and only if  $a=a_{i_1}+\cdots+a_{i_r}$ with indices $1\le i_1<\cdots<i_r\le s$; this sequence of indices is uniquely determined.  

Let $a$ be an idempotent and $-_a$ denote localization at the multiplicatively closed set $\{1,a\}$ of $R$.  For all $M$ and $N$ in $\dcat[R]$ there are canonical isomorphisms
 \begin{gather*}
M\simeq M_a\oplus M_{1-a}
 \quad\text{and}\\
\Rhom R{M_a}N\simeq\Rhom R{M_a}{N_a}\simeq\Rhom RM{N_a}\>\>.
 \end{gather*}
In particular, when $M$ is in $\dcatb R$ so is $M_a$, and there is an isomorphism 
$M\simeq M_a$ in $\dcat[R]$ if and only if one has $\Supp_RM=D_a$.  

Every graded $R$-module $L$ has a canonical decomposition $L=\bigoplus_{i=1}^sL_{a_i}$.  
  \end{chunk}

The next result sounds---for the first time in this paper---the theme of rigidity.

\begin{theorem}
\label{thm:invertible}
Let $L$ be a complex in $\dcatn R$.

If $M$ in $\dcatb R$ satisfies $\Supp_{R}M\supseteq \Supp_{R}L$ and
there is an isomorphism
  \[
M\simeq \Rhom RLM \quad\text{or}\quad M\simeq L\dtensor{R}M\,,
  \]
then for some idempotent $a$ in $R$ the $R_{a}$-module $\HH0L_a$ 
is invertible and one has
  \[
L\simeq\HH0L\simeq\HH0L_a\simeq L_a\quad\text{in}\quad\dcat[R]\,.
 \]
The element $a$ is determined by either one of the following equalities:
 \[
\Supp_{R}M =\{\fp\in\Spec R\mid \fp\not\ni a\} = \Supp_{R}L \,.
 \]
   \end{theorem}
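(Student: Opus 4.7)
The plan is to localize at every prime $\fp \in \Spec R$ and then reassemble the global picture. For $\fp \notin \Supp_R M$, the assumption $\Supp_R L \subseteq \Supp_R M$ immediately gives $L_\fp = 0$ in $\dcat[R_\fp]$. For $\fp \in \Supp_R M$, the given isomorphism descends to $M_\fp \simeq L_\fp \dtensor{R_\fp} M_\fp$ (respectively $M_\fp \simeq \Rhom{R_\fp}{L_\fp}{M_\fp}$) over the local ring $R_\fp$, with $M_\fp$ a nonzero object of $\dcatb{R_\fp}$.

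To extract information on $L_\fp$ I would apply Lemma~\ref{lem:poincare}, which provides the multiplicativity $P^{R_\fp}_{L \dtensor{R_\fp} M}(t) = P^{R_\fp}_L(t) \cdot P^{R_\fp}_M(t)$ of Poincar\'e series under derived tensor, together with the companion identity $I^{R_\fp}_{\Rhom{R_\fp}{L}{M}}(t) = P^{R_\fp}_L(t) \cdot I^{R_\fp}_M(t)$ mixing Poincar\'e and Bass series. The localized isomorphism then becomes an identity of formal Laurent series from which the nonzero common factor $P^{R_\fp}_{M_\fp}(t)$, respectively $I^{R_\fp}_{M_\fp}(t)$, cancels, leaving $P^{R_\fp}_{L_\fp}(t) = 1$. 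By the minimal free resolution characterization in \ref{minimal}, this forces $L_\fp \simeq R_\fp$ in $\dcat[R_\fp]$.

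Combining both cases, $\hh L$ is concentrated in degree~$0$, and setting $H = \HH 0 L$ we obtain $H_\fp \cong R_\fp$ for $\fp \in \Supp_R M$ and $H_\fp = 0$ otherwise, so $\Supp_R H = \Supp_R M$ and every localization of $H$ is free of rank at most~$1$. The main obstacle I anticipate is promoting this pointwise statement to the assertion that $H$ is \emph{finitely generated} and projective; I would pursue this by lifting at each $\fp \in \Supp_R M$ a generator of $H_\fp = R_\fp$ back to $H$ and using the closedness (hence quasi-compactness) of $\Supp_R M$ to piece together finitely many such elements into a finite generating set, or alternatively by exploiting that $H$ is flat and that the relation $H \otimes_R M \simeq M$ yields $H \otimes_R \hh M \cong \hh M$, which should transfer the finiteness of $\hh M$ onto $H$.

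Once $H$ is known to be finite projective of rank in $\{0,1\}$, its rank function $r_H \colon \Spec R \to \BN$ (in the sense of \ref{ch:projrank}) is locally constant, so $\Supp_R H = \{\fp : r_H(\fp) = 1\}$ is clopen and equals $D_a$ for a unique idempotent $a \in R$ by \ref{ch:idempotents}. Since $\Supp_R H \subseteq D_a$, multiplication by $1-a$ vanishes on every localization $H_\fp$ (because $1-a = 0$ in $R_\fp$ for $\fp \in D_a$, while $H_\fp = 0$ for $\fp \in D_{1-a}$), whence $(1-a) H = 0$ and $H \simeq H_a$, an invertible $R_a$-module of constant rank~$1$. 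Finally, because $\hh L = H$ is projective, \ref{ch:projhom} yields $L \simeq H$ in $\dcat[R]$, producing the desired chain $L \simeq \HH 0 L \simeq \HH 0 L_a \simeq L_a$.
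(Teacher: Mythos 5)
Your argument is essentially the paper's own proof: localize, apply Lemma~\ref{lem:bass} (resp.\ Lemma~\ref{lem:poincare}) to cancel the nonzero Bass (resp.\ Poincar\'e) series of $M_\fp$ and conclude $P^{R_\fp}_{L_\fp}(t)=1$, hence $L_\fp\simeq R_\fp$ by \ref{minimal}, and then read off the idempotent from the locally constant rank of the projective module $\HH 0L$ exactly as in the text. The one obstacle you flag---finite generation of $H=\HH 0L$---is not actually an issue: the hypothesis $L\in\dcatn R$ already says $\hh L$ is degreewise finite over the noetherian ring $R$, so $H$ is a finite module whose localizations are free of rank at most one, hence projective, and your quasi-compactness and flatness workarounds are unnecessary.
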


\begin{proof}
If $\hh M=0$, then the hypotheses imply $\Supp_RL=\varnothing$, so $a=0$ is the desired idempotent.  For the rest of the proof we assume $\hh M\ne0$.

If $M\simeq\Rhom RLM$ holds and $\fm$ is in $\Max R\cap\Supp_RM$, then  
Lemma~\ref{lem:bass} shows that 
$L_{\fm}$ is in $\dcatp{R_{\fm}}$ and gives the second equality below:
\[
I_{R_\fm}^{M_\fm}(t)=I_{R_\fm}^{\Rhom RLM_\fm}(t)
=P^{R_\fm}_{L_\fm}(t)\cdot I_{R_\fm}^{M_\fm}(t)\,.
\]
As $I_{R_\fm}^{M_\fm}(t)\ne0$ by \ref{supp:bass}, this gives 
$P^{R_\fm}_{L_\fm}(t)=1$, and hence $L_\fm\simeq R_\fm$ by~\ref{minimal}.  
Thus, for every $\fp\in\Supp_{R}M$ one has $L_\fp\simeq R_\fp$, which yields
$\Supp_RM=\Supp_RL=\Supp_{R}\HH 0L$ and shows the $R$-module $\HH 0L$ 
is projective with $\rank_{R_\fp}{\HH 0 L}_\fp= 1$ for each $\fp\in\Supp_R\HH 0L$.  
The rank of a projective module is constant on connected components of $\Spec R$, 
therefore $\Supp_R\HH 0L$ is a union of such components, whence,
by \ref{ch:idempotents}, there is a unique idempotent $a\in R$, such that
   \[
\Supp_R\HH 0L=\{\fp\in\Spec R\mid \fp\not\ni a\},
   \]
and the graded $R_a$-module $\hh L_a$ is invertible.  The preceding 
discussion, \ref{ch:idempotents}, and \ref{ch:projhom} give isomorphisms 
$L\simeq\HH0L\simeq\HH0L_a\simeq L_a$ in $\dcat[R]$.

A similar argument, using Lemma~\ref{lem:poincare} and ~\ref{supp:poincare}, 
applies if $M\simeq L\dtensor RM$.
 \end{proof}

\section{Invertible complexes}
\label{Invertible complexes}

We say that a complex in $\dcat[R]$ is \emph{invertible} if it is semidualizing 
and perfect.

The following canonical morphisms, defined for all $L$, $M$, and $N$ in 
$\dcat[R]$, play a role in characterizing invertible complexes and in 
using them.  \emph{Evaluation}
  \begin{equation}
    \label{eq:evaluation}
\Rhom RLN\dtensor {R} L \lra N\,.
  \end{equation}
is induced by the chain map $\lambda\otimes l\mapsto\lambda(l)$.   
\emph{Tensor-evaluation} is the composition
  \begin{equation}
    \label{eq:tensor_evaluation}
 \begin{aligned}
\Rhom R{M\dtensor{R} L}{N}\dtensor{R}L 
 &\xra{\ \simeq\ } \Rhom R{L\dtensor{R}M}{N}\dtensor{R}L\\
 &\xra{\ \simeq\ } \Rhom RL{\Rhom R{M}{N}}\dtensor{R}L \\
 &\xra{\ {\phantom{\simeq}}\ }\Rhom R{M}{N}
\end{aligned}
  \end{equation}
where the isomorphisms are canonical and the last arrow is given by 
evaluation.

The equivalence of conditions (i) and (i$'$) in the result below shows
that for complexes with zero differential invertibility agrees with
the notion in~\ref{ch:projrank}. Invertible complexes coincide with
the \emph{tilting complexes} of Frankild, Sather-Wagstaff, and Taylor,
see \cite[4.7]{FST}, where some of the following equivalences are proved.

\begin{proposition}
\label{prop:semid}
For $L\in\dcatb R$ the following conditions are equivalent.
\begin{enumerate}[\quad\rm(i)]
  \item
$L$ is invertible in $\dcat[R]$.
  \item[\rm(i$'$)]
$\hh L$ is an invertible graded $R$-module.
  \item
$\Rhom RLR$ is invertible in $\dcat[R]$.
  \item[\rm(ii$'$)]
$\Ext{}RLR$ is an invertible graded $R$-module.
  \item
For each $\fp\in\Spec R$ one has $L_\fp\simeq\Shift^{r(\fp)}R_\fp$
in $\dcat[R_\fp]$ for some $r(\fp)\in\BZ$.
  \item[\rm(iii$'$)]
For each $\fm\in\Max R$ one has\,\ $P^{R_\fm}_{L_\fm}=t^{r(\fm)}$ for
some $r(\fm)\in\BZ$.
  \item
$U^{-1}L$ is invertible in $\dcat[U^{-1}R]$ for each multiplicatively
closed set $U\subseteq R.$
  \item
For some $N$ in $\dcatfg R$ there is an isomorphism $N\dtensor RL\simeq
R$.
  \item
For each $N$ in $\dcat[R]$ the evaluation map \eqref{eq:evaluation}
is an isomorphism.
 \item[\rm(vi$'$)]
For all $M$, $N$ in $\dcat[R]$ the tensor-evaluation map
  \eqref{eq:tensor_evaluation} is an isomorphism.
 \end{enumerate}
   \end{proposition}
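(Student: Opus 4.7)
The proof hinges on a local structure result: \emph{if $R$ is local with residue field $k$ and $L$ is invertible, then $L \simeq \Shift^r R$ for some $r \in \BZ$.} To establish this, I would choose a minimal representative $F \simeq L$ with $F$ a bounded complex of finite free $R$-modules satisfying $\dd^F(F) \subseteq \fm F$, via \ref{minimal}; then $V := k \dtensor{R} F$ has zero differential. Since $F$ is a bounded complex of finite projectives, tensor-evaluation \eqref{eq:tensor_evaluation} yields an isomorphism $k \dtensor{R} \Rhom{R}{F}{F} \simeq \Hom{k}{V}{V}$, while $\chi^L$ gives $k \dtensor{R} \Rhom{R}{L}{L} \simeq k$. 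Thus $\Hom{k}{V}{V} \simeq k$ as graded $k$-spaces, and counting total dimensions forces $(\dim_k V)^2 = 1$; so $V$ is one-dimensional, concentrated in some degree $r$, whence $F \simeq \Shift^r R$.

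Granted this lemma, the equivalences among (i), (iii), (iii$'$), (iv) follow readily: by Proposition \ref{prop:semidualizing} and Theorem \ref{thm:perfect} both halves of the definition of invertibility are local properties, giving (i)$\iff$(iv); the lemma, together with \ref{minimal}, translates local invertibility into $L_\fm \simeq \Shift^{r(\fm)} R_\fm$, identifying (iii) and (iii$'$) as reformulations of (i). The equivalences (i)$\iff$(i$'$) and (ii)$\iff$(ii$'$) are handled via \ref{ch:projhom}: when $\hh{L}$ is invertible graded, $L \simeq \hh{L}$ in $\dcat[R]$, and decomposing over the primitive idempotents of $R$ (\ref{ch:idempotents}) exhibits this complex as invertible; conversely, the local lemma forces each $(\HH{n}{L})_\fp$ to be free of rank one in a single degree or zero, so $\hh{L}$ is invertible graded. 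The involution $\Rhom{R}{-}{R}$ sends $\Shift^r R$ to $\Shift^{-r} R$ locally, hence preserves invertibility, yielding (i)$\iff$(ii); the analogous argument applied to $\hh{\Rhom{R}{L}{R}}$ delivers (ii)$\iff$(ii$'$).

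For the remaining conditions I would use the cycle (i)$\Rightarrow$(vi$'$)$\Rightarrow$(vi)$\Rightarrow$(v)$\Rightarrow$(iii$'$). For (i)$\Rightarrow$(vi$'$), I check that tensor-evaluation \eqref{eq:tensor_evaluation} is an isomorphism after localization at any prime; once $L_\fp \simeq \Shift^{r(\fp)} R_\fp$ this is immediate. The implication (vi$'$)$\Rightarrow$(vi) is trivial, obtained by setting $M = R$, and (vi)$\Rightarrow$(v) follows by taking $N = R$ in the evaluation map: the resulting isomorphism $\Rhom{R}{L}{R} \dtensor{R} L \simeq R$ exhibits $\Rhom{R}{L}{R}$ as the required object for (v), and $\Rhom{R}{L}{R}$ lies in $\dcatfg{R}$ because each $\Ext{n}{R}{L}{R}$ is finite (resolve $L$ by a bounded-above complex of finite free modules). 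Finally, (v)$\Rightarrow$(iii$'$): localizing $N \dtensor{R} L \simeq R$ at $\fm$ and applying the product formula for Poincar\'e series (Lemma \ref{lem:poincare}) gives $P^{R_\fm}_{N_\fm}(t) \cdot P^{R_\fm}_{L_\fm}(t) = 1$; nonnegativity of the coefficients then forces each factor to be a single monomial, so $P^{R_\fm}_{L_\fm}(t) = t^{r(\fm)}$.

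The main obstacle is the local structure lemma; the other implications are essentially bookkeeping once it is in place. The delicate step is the identification $k \dtensor{R} \Rhom{R}{L}{L} \simeq \Hom{k}{V}{V}$, which relies on $L$ being perfect (so that tensor-evaluation is an isomorphism) and on the minimality of $F$ (so that $V$ has zero differential, making the dimension count clean).
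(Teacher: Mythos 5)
Your local structure lemma is correct, and it is in fact a more direct route than the paper takes: the paper only obtains $L_\fp\simeq\Shift^{r}R_\fp$ by going around the cycle (i)$\Rightarrow$(vi)$\Rightarrow$(v)$\Rightarrow$(iii$'$)$\Rightarrow$(iii) via Poincar\'e series, whereas your endomorphism count on the minimal free model $F$ gets it in one step. The equivalences (i)$\iff$(iv), (i)$\iff$(iii)$\iff$(iii$'$), (i)$\iff$(i$'$), and the implications (vi$'$)$\Rightarrow$(vi)$\Rightarrow$(v)$\Rightarrow$(iii$'$) are all fine as you describe them. There are, however, two genuine gaps.

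First, (ii)$\Rightarrow$(i). Knowing that $\Rhom R{-}R$ carries $\Shift^{r}R_\fp$ to $\Shift^{-r}R_\fp$ shows that $\Rhom R{L'}R$ is invertible whenever $L'$ is; but to conclude that $L$ is invertible from the invertibility of $L'=\Rhom RLR$ you must know $L\simeq\Rhom R{L'}R$, i.e., that $L$ is derived $R$-reflexive. That is not automatic for an arbitrary $L\in\dcatb R$: it is exactly what Theorem \ref{thm:refl}, (ii)$\Rightarrow$(i) with $C=R$ supplies (the perfect complex $L'$ is derived $R$-reflexive and $\Supp_RL\subseteq\Spec R$ trivially), and the paper invokes it at precisely this point. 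Calling $\Rhom R{-}R$ an ``involution'' presupposes the conclusion. Second, (i)$\Rightarrow$(vi$'$) cannot be verified by localizing at primes: conditions (vi) and (vi$'$) quantify over arbitrary $M,N\in\dcat[R]$ with no finiteness hypotheses, and $\Rhom R{-}{-}$ does not commute with localization in that generality. So after localizing the tensor-evaluation map you cannot identify its source and target with $\Rhom{R_\fp}{M_\fp\dtensor{R_\fp}L_\fp}{N_\fp}\dtensor{R_\fp}L_\fp$ and $\Rhom{R_\fp}{M_\fp}{N_\fp}$, and the local triviality of $L$ buys you nothing. The correct argument uses perfectness of $L$ globally, as the paper does: for $L$ perfect and semidualizing one has $\Rhom RLN\dtensor RL\simeq\Rhom RL{L\dtensor RN}\simeq\Rhom RLL\dtensor RN\simeq N$ for every $N$, which is (vi), and then (vi$'$) follows by applying (vi) with $\Rhom RMN$ in place of $N$.
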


\begin{proof}
(i)$\iff$(iv).
This follows from Proposition \ref{prop:semidualizing} and 
Theorem~\ref{thm:perfect}.

(i)$\implies$(vi). 
The first two isomorphisms below holds because $L$ is perfect:
\[
\Rhom RLN\dtensor R L \simeq \Rhom RL{L\dtensor RN} \simeq 
{\Rhom RLL}\dtensor RN \simeq N\,.
\]
The third one holds because $L$ is semidualizing.

(vi)$\implies$(vi$'$).  In \eqref{eq:tensor_evaluation}, use
\eqref{eq:evaluation} with $\Rhom RMN$ in place of $N$.

(vi$'$)$\implies$(vi). Set $M=R$ in \eqref{eq:tensor_evaluation}.

(vi)$\implies$(v). Setting $N=R$ one gets an isomorphism $\Rhom
RLR\dtensor RL\simeq R$. Note that $\Rhom RLR$ is in $\dcatn R$, 
since $L$ is in $\dcatb R$.

Condition (v) localizes, and the already proved equivalence of (i) and
(iv) shows that conditions (i) and (ii) can be checked locally.  Clearly,
the same holds true for conditions (i$'$), (ii$'$), (iii$'$), and (iii).
Thus, in order to finish the proof it suffices to show that when $R$
is a local ring there exists a string of implications linking (v) to
(i) and passing through the remaining conditions.

(v)$\implies$(iii$'$).
Lemma \ref{lem:poincare} gives $P^R_{N}(t)\cdot P^R_L(t)=1$.
Such an equality of formal Laurent series implies $P^R_L(t)=t^r$
and $P^R_{N}(t)=t^{-r}$ for some integer $r$.

(iii$'$)$\implies$(iii).
This follows from \ref{minimal}.

(iii)$\implies$(i$'$).
This implication is evident.

(i$'$)$\implies$(ii$'$).
As $\hh L$ is projective one has $L\simeq\hh L$ in $\dcat[R]$, see
\ref{ch:projhom}, hence
\[
\Ext{}RLR\cong \Ext{}R{\hh L}R\cong \Hom R{\hh L}R\,.
\]
Now note that the graded module $\Hom R{\hh L}R$ is invertible because
$\hh L$ is.

(ii$'$)$\implies$(ii).
Because $\hh{\Rhom RLR}$ is projective, \ref{ch:projhom} gives
the first isomorphism below; the second one holds (for some $r\in\BZ$)
because $R$ is local:
\[
\Rhom RLR\simeq\hh{\Rhom RLR}=\Ext{}RLR\simeq\Shift^rR\,.
\]

(ii)$\implies$(i).  The invertible complex $L'=\Rhom RLR$ is evidently 
derived $R$-reflexive, so the implication (ii)$\implies$(i) in 
Theorem~\ref{thm:refl} applies with $C=R$. It gives 
$L\simeq\Rhom R{L'}R$; now note that $\Rhom R{L'}R$ is invertible 
along with $L$.
 \end{proof}

Recall that $\pic R$ denotes the \emph{Picard group} of $R$, whose
elements are isomorphism classes of invertible $R$-modules, 
multiplication is  induced by tensor product over $R$, and the class
of $\Hom RLR$ is the inverse of that of $L$.  A derived version of this
construction is given in \cite[4.1]{FST} and is recalled below; it coincides
with the derived Picard group of $R$ relative to itself, in the sense of 
Yekutieli \cite[3.1]{Ye}.

 \begin{chunk}
\label{picard}
When $L$ is an invertible complex, we set 
  \[
L^{-1}=\Rhom RLR\,.
  \]
Condition (vi) of Proposition~\ref{prop:semid} gives for each 
$N\in\dcat[R]$ an isomorphism
  \[
\Rhom RLN\simeq L^{-1}\dtensor RN\,.
  \]

In view of~\ref{ch:projhom}, condition (i$'$) of
Proposition~\ref{prop:semid} implies that the isomorphism classes $[L]$
of invertible complexes $L$ in $\dcat[R]$ form a set, which we denote
$\dpic R$.
As derived tensor products are associative and commutative, $\dpic R$ 
carries a natural structure of abelian group, with unit element $[R]$, and
$[L]^{-1}=[L^{-1}]$; cf.~\cite[4.3.1]{FST}.  Following \emph{loc.~cit.},
we refer to it as the \emph{derived Picard group} of $R$.

We say that complexes $M$ and $N$ are \emph{derived Picard equivalent}
if there is an isomorphism $N\simeq L\dtensor RM$ for some invertible 
complex $L$.

Clearly, if $N$ and $N'$ are complexes in $\dcat[R]$ which satisfy
$L\dtensor R N\simeq L\dtensor R N'$ or $\Rhom RLN\simeq \Rhom RL{N'}$,
then $N\simeq N'$.
    \end{chunk}

The derived Picard group of a local ring $R$ is the free abelian group
with generator $[\Shift R]$; see \cite[4.3.4]{FST}.  In general, one
has the following description, which is a special case of \cite[3.5]{Ye}.
We include a proof, for the sake of completeness.

  \begin{proposition}
There exists a canonical isomorphism of abelian groups
 \[
\dpic R\xra{\,\cong\,}\prod_{i=1}^s\big(\pic{R_{a_i}}\times\BZ\big)\,,
 \]
where $\{a_1,\dots,a_s\}$ is the complete set of primitive orthogonal 
idempotents; see \emph{\ref{ch:idempotents}}.
  \end{proposition}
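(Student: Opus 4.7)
The plan is to reduce, via the idempotent decomposition of \ref{ch:idempotents}, to the case in which $\Spec R$ is connected, and then to establish an isomorphism $\dpic R \cong \pic R \times \BZ$ by extracting the single degree at which the homology of an invertible complex is concentrated.

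For the reduction, \ref{ch:idempotents} gives canonical decompositions $L \simeq \bigoplus_{i=1}^s L_{a_i}$ in $\dcat[R]$, and because $a_i a_j = 0$ for $i \ne j$ one has $L_{a_i} \dtensor R L'_{a_j} = 0$ for $i \ne j$, so derived tensor products respect the splitting. An invertible $L$ therefore restricts to an invertible $L_{a_i}$ over each $R_{a_i}$, and conversely invertibles over the $R_{a_i}$ assemble to an invertible over $R$. This yields a group isomorphism
\[
\dpic R \xra{\,\cong\,} \prod_{i=1}^s \dpic{R_{a_i}}\,,
\]
reducing the claim to the case in which $\Spec R$ is connected.

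Assume now $\Spec R$ is connected and let $L$ be invertible. Proposition~\ref{prop:semid}(i$'$) shows that $\hh L$ is an invertible graded $R$-module; write $H_i = \HH i L$. Each $H_i$ is finite projective, and $\sum_{i\in\BZ}\rank_{R_\fp}(H_i)_\fp = 1$ for every $\fp$. By \ref{ch:projrank} the rank function $\fp \mapsto \rank_{R_\fp}(H_i)_\fp$ is constant on the connected $\Spec R$, so there exists a unique index $r$ for which $H_r$ has rank $1$ everywhere, with $H_i = 0$ for $i \ne r$. Thus $H_r \in \pic R$, and \ref{ch:projhom} yields $L \simeq \hh L \simeq \Shift^r H_r$ in $\dcat[R]$.

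To finish, define $\varphi \colon \dpic R \to \pic R \times \BZ$ by $[L] \mapsto ([H_r], r)$ when $L \simeq \Shift^r H_r$ as above. Using \ref{ch:projhom} one obtains $\Shift^r H_r \dtensor R \Shift^{r'} H_{r'} \simeq \Shift^{r+r'}(H_r \otimes_R H_{r'})$, so $\varphi$ is a group homomorphism, and the evident assignment $([P], r) \mapsto [\Shift^r P]$ is a two-sided inverse. Combining with the reduction delivers the stated canonical isomorphism. The only delicate point in the argument is pinning down a single shift per connected component, and this is immediate from the local-constancy properties of finite projective modules recalled in \ref{ch:projrank}; the rest is routine bookkeeping.
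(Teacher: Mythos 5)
Your proof is correct and follows essentially the same route as the paper's: both use Proposition~\ref{prop:semid}(i$'$) to replace an invertible complex by its (graded invertible) homology, decompose via the idempotents of \ref{ch:idempotents}, and use connectedness of each $\Spec(R_{a_i})$ to force the homology to concentrate in a single degree, giving a shift of a Picard-group element. Your version merely spells out the component-wise reduction and the homomorphism/inverse checks that the paper leaves implicit.
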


 \begin{proof}
By Proposition \ref{prop:semid}, every element of $\dpic R$ is equal
to $[L]$ for some graded invertible $R$-module $L$.  In the canonical
decomposition from \ref{ch:idempotents} each $R_{a_i}$-module 
$L_{a_i}$ is graded invertible.  It is indecomposable because 
$\Spec\big(R_{a_i}\big)$ is connected, hence $L_{a_i}\cong\Shift^{n_i}L_i$ 
with uniquely determined invertible $R_{a_i}$-module $L_i$ and $n_i\in\BZ$.  
The map $[L]\mapsto\big(([L_1],n_1),\dots,([L_s],n_s)\big)$ gives the desired
isomorphism.
 \end{proof}

Other useful properties of derived Picard group actions are collected
in the next two results, which overlap with \cite[4.8]{FST}; we include
proofs for completeness.

\begin{lemma}
\label{refl:operations}
For $L$ invertible, and $C$ and $M$ in $\dcatb R$, the following are
equivalent.
  \begin{enumerate}[\quad\rm(i)]
   \item
$M$ is derived $C$-reflexive.
  \item
$M$ is derived $L\dtensor RC$-reflexive.
   \item
$L\dtensor{R}M$ is derived $C$-reflexive.
\end{enumerate}
 \end{lemma}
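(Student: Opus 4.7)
The plan is to exploit the fact that for invertible $L$ the functors $L\dtensor R-$ and $\Rhom RL-\simeq L^{-1}\dtensor R-$ are inverse autoequivalences of $\dcat[R]$ that preserve $\dcatb R$ and derived finiteness. Writing $L^{-1}=\Rhom RLR$ and using Proposition~\ref{prop:semid} (in particular the tensor-evaluation isomorphism \eqref{eq:tensor_evaluation} and the identification $\Rhom RLN\simeq L^{-1}\dtensor RN$ from \ref{picard}), I would produce natural isomorphisms that convert one biduality morphism into another, then invoke naturality.

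For the equivalence (i)$\iff$(ii), the key computation is
\[
\Rhom RM{L\dtensor RC}\simeq L\dtensor R\Rhom RMC
\]
via tensor-evaluation with the perfect complex $L$, and, iterating,
\[
\Rhom R{\Rhom RM{L\dtensor RC}}{L\dtensor RC}\simeq\Rhom R{L\dtensor R\Rhom RMC}{L\dtensor RC}\simeq\Rhom R{\Rhom RMC}C,
\]
the last step using $\Rhom RL-\simeq L^{-1}\dtensor R-$ and $L^{-1}\dtensor RL\simeq R$. Since $L$ and $L^{-1}$ are perfect, the homological finiteness of $\Rhom RMC$ is equivalent to that of $\Rhom RM{L\dtensor RC}$. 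A naturality check of $\delta$ under these canonical isomorphisms shows that $\delta^{L\dtensor RC}_M$ is an isomorphism if and only if $\delta^C_M$ is.

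For (i)$\iff$(iii), set $M'=L\dtensor RM$ and compute
\[
\Rhom R{M'}C\simeq\Rhom RL{\Rhom RMC}\simeq L^{-1}\dtensor R\Rhom RMC,
\]
and then
\[
\Rhom R{\Rhom R{M'}C}C\simeq\Rhom R{L^{-1}\dtensor R\Rhom RMC}C\simeq L\dtensor R\Rhom R{\Rhom RMC}C.
\]
Under these identifications, $\delta^C_{M'}$ becomes $L\dtensor R\delta^C_M$. Since $L\dtensor R-$ is an autoequivalence (its inverse being $L^{-1}\dtensor R-$), the former is an isomorphism exactly when the latter is, and homological finiteness of $\Rhom R{M'}C$ is equivalent to that of $\Rhom RMC$.

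The only genuine obstacle is the naturality verification: one must check that the biduality morphism $\delta$ is compatible with the tensor-evaluation and Hom-tensor adjunction isomorphisms used above. This is routine given that all maps involved are induced by the standard evaluation pairings at the chain level, and it can be handled by a direct diagram chase at the level of complexes once $L$ is replaced by a bounded complex of finite projectives.
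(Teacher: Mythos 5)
Your proposal is correct, and for the equivalence (i)$\iff$(ii) it is essentially the paper's argument: the paper constructs the same canonical isomorphism $\vartheta\col L\dtensor R{\Rhom RMC}\to\Rhom RM{L\dtensor RC}$ (checked by localizing at each prime, where $L_\fp\cong\Shift^{r}R_\fp$) and then verifies compatibility with the biduality morphisms via a commutative diagram, exactly the naturality check you defer to the end. One small stylistic difference: the paper only proves (i)$\implies$(ii) by this diagram and then gets (ii)$\implies$(i) for free by applying the established implication with $L^{-1}$ in place of $L$, whereas your identifications are symmetric and give both directions at once; either way works. Where you genuinely diverge is (i)$\iff$(iii). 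The paper simply cites its Theorem~\ref{thm:perfect-reflexive} on perfect complexes, whose converse direction (from $M\dtensor RN$ reflexive to $N$ reflexive) rests on Theorem~\ref{thm:perfect-bounds} and hence, via \cite{FI}, on the New Intersection Theorem. Your argument instead exploits the stronger hypothesis that $L$ is invertible (not merely perfect): since $L\dtensor R-$ is an autoequivalence with inverse $L^{-1}\dtensor R-$, the identifications $\Rhom R{L\dtensor RM}C\simeq L^{-1}\dtensor R\Rhom RMC$ and $\Rhom R{\Rhom R{L\dtensor RM}C}C\simeq L\dtensor R\Rhom R{\Rhom RMC}C$ turn $\delta^C_{L\dtensor RM}$ into $L\dtensor R\delta^C_M$, and boundedness transfers both ways by tensoring back with $L^{\pm1}$. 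This is more elementary and self-contained for the case at hand, at the cost of not yielding the more general perfect-complex statement that the paper's cited theorem provides. The only point to make sure you actually carry out is the naturality verification you flag at the end; as you say, it reduces to a chain-level diagram chase once $L$ is replaced by a bounded complex of finite projectives, and the paper's own proof handles the analogous verification by localization.
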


\begin{proof}
(i)$\implies$(ii). Since $L$ is invertible, the morphism
\[
\vartheta\col L\dtensor R{\Rhom RMC}\to\Rhom RM{L\dtensor RC}
\]
represented by $l\otimes\alpha\mapsto(m\mapsto l\otimes\alpha(m))$, is an isomorphism: It suffices to check the assertion after localizing at each $\fp\in\Spec R$, where it follows from $L_\fp\cong  R_\fp$.
In particular, since $\Rhom RMC$ is in $\dcatb R$, so is $\Rhom RM{L\dtensor RC}$. Furthermore, in $\dcat[R]$ there is a commutative diagram of canonical morphisms
 \[
\xymatrixcolsep{3pc}
\xymatrixrowsep{2.5pc}
\xymatrix {
M\ar@{->}[r]^-{\delta^{L\dtensor RC}_M}\ar@{->}[d]_{\delta^C_M}^-{\simeq}
&\Rhom R{\Rhom RM{L\dtensor RC}}{L\dtensor RC}
    \ar@{->}[d]^{\Rhom R{\vartheta}{L\dtensor RC}}_-{\simeq}
  \\
\Rhom R{\Rhom RMC}C\ar@{->}[r]^-{\lambda}_-{\simeq}
&\Rhom R{L\dtensor R{\Rhom RMC}}{L\dtensor RC}
}
\]
with $\lambda(\alpha)=L\dtensor R\alpha$, which is an isomorphism, as is readily verified by localization.
Thus, $M$ is derived $L \dtensor RC$-reflexive.

(ii)$\implies$(i). The already established implication (i)$\implies$(ii) shows that $M$ is reflexive with respect to $L^{-1}\dtensor R(L\dtensor RC)$, which is isomorphic to $C$.

(i)$\iff$(iii)  This follows from Theorem \ref{thm:perfect-reflexive}.
  \end{proof}

{From} Proposition~\ref{prop:semidualizing} and Lemma~\ref{refl:operations}, we obtain:

\begin{lemma}
\label{refl:operations2}
For $L$ invertible and $C$ in $\dcatb R$ the following are equivalent.
  \begin{enumerate}[\quad\rm(i)]
   \item
$C$ is semidualizing.
 \item
$L\dtensor RC$ is semidualizing.
   \item
$L$ is derived $C$-reflexive.
  \qed
     \end{enumerate}
\end{lemma}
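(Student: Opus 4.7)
The plan is to prove this lemma by specializing Lemma~\ref{refl:operations} to the case $M=R$ and translating between ``semidualizing'' and ``derived $R$-reflexive'' via Proposition~\ref{prop:semidualizing}. More precisely, condition (i')$\iff$(i) of Proposition~\ref{prop:semidualizing} says that for any complex $C'\in\dcatb R$, the complex $C'$ is semidualizing if and only if $R$ is derived $C'$-reflexive; this is the bridge between conditions (i), (ii) of the present lemma and the reflexivity statements in Lemma~\ref{refl:operations}.

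For (i)$\iff$(iii), I would apply the equivalence (i)$\iff$(iii) of Lemma~\ref{refl:operations} with $M=R$: derived $C$-reflexivity of $R$ is equivalent to derived $C$-reflexivity of $L\dtensor R R\simeq L$. Composing with the reformulation of (i) as ``$R$ is derived $C$-reflexive'' from Proposition~\ref{prop:semidualizing}, we get (i)$\iff$(iii).

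For (i)$\iff$(ii), I first need to know that $L\dtensor R C\in\dcatb R$, so that the criterion of Proposition~\ref{prop:semidualizing} applies to it; this is immediate because $L$ is perfect and $C$ is homologically bounded (see \ref{ch:perfect-tensor} or directly by Lemma~\ref{lem:poincare} at each prime). Then Proposition~\ref{prop:semidualizing} rewrites (i) as ``$R$ is derived $C$-reflexive'' and (ii) as ``$R$ is derived $L\dtensor R C$-reflexive'', and the equivalence (i)$\iff$(ii) of Lemma~\ref{refl:operations}, applied with $M=R$, identifies these two conditions.

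There is essentially no obstacle here: the lemma is a formal consequence of the two cited results once one takes $M=R$, and the only small point to verify is that $L\dtensor R C$ lies in $\dcatb R$ so that Proposition~\ref{prop:semidualizing} can be invoked for condition (ii).
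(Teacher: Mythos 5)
Your proposal is correct and is exactly the paper's argument: the paper states the lemma with a terminal \qed precisely because it follows immediately ``from Proposition~\ref{prop:semidualizing} and Lemma~\ref{refl:operations}'' by taking $M=R$ and using the equivalence (i)$\iff$(i$'$) of Proposition~\ref{prop:semidualizing} to translate between semidualizing and derived $R$-reflexivity. Your added remark that $L\dtensor RC\in\dcatb R$ (needed to invoke Proposition~\ref{prop:semidualizing} for condition (ii)) is a correct and harmless bit of extra care.
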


Invertible complexes are used in \cite[5.1]{FST} to characterize mutual
reflexivity of a pair of semidualizing complexes.  The next theorem is 
fundamentally different, in that the semidualizing property is part of its 
conclusions, not of its hypotheses.

\begin{theorem}
\label{thm:reflsemid}
For $B$ and $C$ in $\dcatb R$ the following conditions are equivalent.
\begin{enumerate}[\rm\quad(i)]
  \item
$B$ is derived $C$-reflexive, $C$ is derived $B$-reflexive, and
$\Supp_{R}B=\Spec R$.
  \item 
$B$ is semidualizing, $\Rhom RBC$ is invertible, and the evaluation map
$\Rhom RBC \dtensor RB\to C$ is an isomorphism in $\dcat[R]$.
 \item
$B$ and $C$ are semidualizing and derived Picard equivalent.
   \end{enumerate}
 \end{theorem}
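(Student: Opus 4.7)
The plan is to prove the cycle (iii) $\implies$ (ii) $\implies$ (i) $\implies$ (iii), with the bulk of the work going into the last implication. For (iii) $\implies$ (ii), given $B$ and $C$ semidualizing with $C\simeq L\dtensor RB$ for some invertible $L$, the projection isomorphism $\vartheta$ from the proof of Lemma~\ref{refl:operations}, combined with $\chi^B\col R\iso\Rhom RBB$, yields
\[
\Rhom RBC \simeq \Rhom RB{L\dtensor RB} \simeq L\dtensor R\Rhom RBB \simeq L\,,
\]
so $\Rhom RBC$ is invertible and the evaluation $\Rhom RBC\dtensor RB\to C$ is identified with the given isomorphism $L\dtensor RB\iso C$. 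For (ii) $\implies$ (i), the semidualizing property of $B$ provides $\Supp_RB=\Spec R$, and Proposition~\ref{prop:foxby} applied to the chains
\[
\Rhom R{\Rhom RBC}C\simeq\Rhom RLC\simeq C\dtensor RL^{-1}\simeq B
\]
and
\[
\Rhom R{\Rhom RCB}B\simeq\Rhom R{L^{-1}}B\simeq B\dtensor RL\simeq C
\]
(in which Proposition~\ref{prop:semid}(vi) is used to convert $\Rhom R{L^{\pm1}}{-}$ into a tensor with $L^{\mp1}$) establishes that $B$ is derived $C$-reflexive and $C$ is derived $B$-reflexive.

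The decisive implication is (i) $\implies$ (iii). First observe that $C$ being derived $B$-reflexive forces $\Supp_RC\subseteq\Supp_RB=\Spec R$. Set $L=\Rhom RBC$ and $M=\Rhom RCB$, both in $\dcatb R$. Localize at an arbitrary $\fm\in\Max R$ and feed the isomorphisms $B_\fm\simeq\Rhom{R_\fm}{L_\fm}{C_\fm}$ and $C_\fm\simeq\Rhom{R_\fm}{M_\fm}{B_\fm}$ into Lemma~\ref{lem:bass} to obtain
\[
I_{R_\fm}^{B_\fm}(t)=P^{R_\fm}_{L_\fm}(t)\cdot I_{R_\fm}^{C_\fm}(t)\quad\text{and}\quad I_{R_\fm}^{C_\fm}(t)=P^{R_\fm}_{M_\fm}(t)\cdot I_{R_\fm}^{B_\fm}(t)\,.
\]
Since $\fm\in\Supp_RB$ forces $I_{R_\fm}^{B_\fm}(t)\ne0$, we deduce $P^{R_\fm}_{L_\fm}(t)\cdot P^{R_\fm}_{M_\fm}(t)=1$. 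Because Poincar\'e series of complexes in $\dcatb{R_\fm}$ are Laurent series bounded below with nonnegative integer coefficients, such a factorization of $1$ forces each factor to be a Laurent monomial $t^{r(\fm)}$; Proposition~\ref{prop:semid}(iii$'$) $\implies$ (i) then yields that $L$ is invertible.

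Given this, the evaluation isomorphism of Proposition~\ref{prop:semid}(vi) gives $L\dtensor RB\simeq L\dtensor R\Rhom RLC\simeq C$, exhibiting $B$ and $C$ as derived Picard equivalent. To conclude that both are semidualizing, apply Lemma~\ref{refl:operations} with invertible $L$ and the lemma's $M$ and $C$ both taken to be $B$: the equivalence (i)$\iff$(ii) identifies the hypothesized derived $C$-reflexivity of $B$ (via $C\simeq L\dtensor RB$) with derived $B$-reflexivity of $B$. Combined with $\Supp_RB=\Spec R$, Proposition~\ref{prop:semidualizing}(ii) forces $B$ to be semidualizing, and Lemma~\ref{refl:operations2} then upgrades this to $C$ semidualizing as well. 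The main technical obstacle is the Poincar\'e-series cancellation step: one must justify that both $P^{R_\fm}_{L_\fm}(t)$ and $P^{R_\fm}_{M_\fm}(t)$ are Laurent series bounded below with nonnegative integer coefficients, a property afforded by the minimal free resolutions of complexes in $\dcatb{R_\fm}$, so that their product equaling $1$ forces each to be a monomial. Everything else is a direct assembly of Propositions~\ref{prop:semid} and~\ref{prop:semidualizing} together with Lemmas~\ref{refl:operations} and~\ref{refl:operations2}.
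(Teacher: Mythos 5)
Your proof is correct and rests on the same key idea as the paper's: the Bass--Poincar\'e series cancellation $I^{B}=P_{L}\cdot P_{M}\cdot I^{B}$ at each maximal ideal (via Lemma~\ref{lem:bass}), which forces $\Rhom RBC$ to be a monomial Poincar\'e series locally and hence invertible by Proposition~\ref{prop:semid}. You traverse the cycle in the opposite direction, (iii)$\Rightarrow$(ii)$\Rightarrow$(i)$\Rightarrow$(iii) rather than the paper's (i)$\Rightarrow$(ii)$\Rightarrow$(iii)$\Rightarrow$(i), but the supporting results (Propositions~\ref{prop:foxby}, \ref{prop:semidualizing}, \ref{prop:semid} and Lemmas~\ref{refl:operations}, \ref{refl:operations2}) and their assembly are essentially identical.
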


\begin{proof}
(i)$\implies$(ii). The hypotheses pass to localizations and,  by
Propositions~\ref{prop:semidualizing} and \ref{prop:semid}, the
conclusions can be tested locally. We may thus assume $R$ is local.

Set $F=\Rhom RBC$ and $G=\Rhom RCB$.  In view of Lemma \ref{lem:bass},
the isomorphism $B\simeq \Rhom RFC$ and $C\simeq \Rhom RGB$ yield
  \[
I_{R}^{B}(t)=P^{R}_{F}(t)\cdot I_{R}^{C}(t)\quad
  \text{and} \quad
I_{R}^{C}(t)=P^{R}_{G}(t) \cdot I_{R}^{B}(t)
  \]
As $I_{R}^{B}(t)\ne 0$ holds, see \ref{supp:bass}, these equalities imply
  \(
P^{R}_{F}(t)\cdot P^{R}_{G}(t)=1\,,
  \)
hence $P^{R}_{F}(t)=t^{r}$ holds for some $r$. Proposition \ref{prop:semid}
now gives $F\simeq\Shift^{r}R$, so one gets
  \[
B\simeq \Rhom RFC\simeq \Rhom R{\Shift^{r}R}C\simeq\Shift^{-r}C\,.
  \]
Thus, $B$ is derived $B$-reflexive, hence semidualizing by
Proposition~\ref{prop:semidualizing}.   A direct verification
shows that the following evaluation map is an isomorphism:
  \[
\Rhom R{\Shift^{-r}C}C \dtensor R{\Shift^{-r}C}\to C\,.
  \]

(ii)$\implies$(iii) Lemma~\ref{refl:operations2} shows that $C$ is
semidualizing; the rest is clear.

(iii)$\implies$(i).  Proposition~\ref{prop:semidualizing} shows that
$B$ satisfies $\Supp_RB=\Spec R$ and is derived $B$-reflexive. {From}
Lemma~\ref{refl:operations} we then see that $B$ is derived
$C$-reflexive. A second loop, this time starting from $C$, shows that $C$
is derived $B$-reflexive.
  \end{proof}

Taking $B=R$ one recovers a result contained in \cite[8.3]{Ch2}.

\begin{corollary}
\label{cor:reflsemid}
A complex in $\dcat[R]$ is invertible if and only if it is semidualizing
and derived $R$-reflexive.\qed 
\end{corollary}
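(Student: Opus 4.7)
The plan is to obtain the corollary by specializing Theorem~\ref{thm:reflsemid} to the case $B=R$ and unraveling what the three equivalent conditions say in that situation.

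First I would verify that with $B=R$, condition (i) of Theorem~\ref{thm:reflsemid} reduces exactly to the statement ``$C$ is semidualizing and derived $R$-reflexive''. Indeed, $\Supp_R R = \Spec R$ is automatic, and the condition ``$R$ is derived $C$-reflexive'' is equivalent to $C$ being semidualizing by Proposition~\ref{prop:semidualizing} ((i)$\iff$(i$'$)). So condition (i) with $B=R$ becomes precisely the right-hand side of the corollary.

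Next I would interpret condition (iii) with $B=R$. Since $R$ is trivially semidualizing (as $\chi^R$ is plainly an isomorphism), condition (iii) asserts that $C$ is semidualizing and derived Picard equivalent to $R$. Unwinding the definition in \ref{picard}, this means $C \simeq L \dtensor R R \simeq L$ for some invertible complex $L$; equivalently, $C$ itself is invertible. Thus (iii) with $B=R$ is precisely the left-hand side of the corollary.

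The corollary then follows immediately from the equivalence (i)$\iff$(iii) of Theorem~\ref{thm:reflsemid}. There is no real obstacle here: the substantive content already resides in that theorem, and the only care required is the observation that ``derived Picard equivalence to $R$'' is the same as ``being invertible''.
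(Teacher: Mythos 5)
Your proposal is correct and is exactly the paper's intended argument: the paper states the corollary immediately after the remark ``Taking $B=R$ one recovers \dots'' and marks it \qed, so the proof is precisely the specialization $B=R$ in Theorem~\ref{thm:reflsemid} together with the two observations you make (condition (i) reduces to the right-hand side via Proposition~\ref{prop:semidualizing}, and condition (iii) reduces to invertibility of $C$ since derived Picard equivalence to $R$ means $C\simeq L$ for an invertible $L$). Nothing is missing.
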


  \section{Duality}
   \label{Duality}

We say that a contravariant $R$-linear exact functor $\ssd\col\dcat[R]\to\dcat[R]$
is a \emph{duality} on a subcat\-egory~$\sA$ of~$\dcat[R]$ if it satisfies
$\ssd(\sA)\subseteq\sA$ and $\ssd^2\vert_{\sA}$ is isomorphic to $\id^\sA$.

In this section we link dualities on subcategories of $\dcatb R$ to semidualizing complexes.  In the `extremal' cases, when the subcategory equals $\dcatb R$ 
itself or when the semidualizing complex is the module $R$, we recover a 
number of known results and answer some open questions.

\subsection{Reflexive subcategories}
  \label{Reflexive subcategories}

For each complex $C$ in $\dcat[R]$, set 
 \[
\sh_C=\Rhom R-C\col\dcat[R]\lra\dcat[R]\,.
 \]
The \emph{reflexive subcategory} of $C$ is the full subcategory of 
$\dcat[R]$ defined by
  \[
\refl=\{M\in\dcatb R\mid M\simeq\sh_C^2(M)\}\,.
 \]
By Proposition \ref{prop:foxby}, the functor $\sh_C$ is a duality on $\refl$ 
\emph{provided} $\sh_C(\refl)\subseteq\refl$ holds. We note that, 
under an additional condition, such a $C$ has to be semidualizing.

  \begin{ssubproposition}
    \label{prop:dualitiesd}
Let $\ssd$ be a duality on a subcategory $\sA$ of $\dcatb R$. 

If $\sA$ contains $R$, then the complex $C=\ssd(R)$ is semidualizing and $\sA$ is contained in $\refl[C]$; furthermore, for each module $R$-module $M$ in $\sA$ there is an isomorphism 
\[
M\simeq \Rhom R{\ssd(M)}C\,.
\]
  \end{ssubproposition}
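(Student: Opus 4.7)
My plan is to extract, from the functoriality and $R$-linearity of $\ssd$, a natural morphism of Rhom complexes
\[
\ssd_{M,N}\colon\Rhom RMN\lra\Rhom R{\ssd N}{\ssd M}\quad\text{in}\quad\dcat[R]
\]
and to verify that it is an isomorphism whenever $M,N\in\sA$. Specializing to $M=R$ then produces a natural map $\eta_N\colon N\simeq\Rhom RRN\to\Rhom R{\ssd N}C$ that is an isomorphism for every $N\in\sA$, and by $R$-linearity $\eta_R$ coincides with the homothety $\chi^C\colon R\to\Rhom RCC$ (since $\ssd$ sends $r\cdot\id_R$ to $r\cdot\id_C$).

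To see that $\ssd_{M,N}$ is invertible on $\sA$, I would chase the composite
\[
\Rhom RMN\xra{\ssd_{M,N}}\Rhom R{\ssd N}{\ssd M}\xra{\ssd_{\ssd N,\ssd M}}\Rhom R{\ssd^2M}{\ssd^2N}\,,
\]
which, by the naturality of $\ssd_{-,-}$ and the structure isomorphism $\iota\colon\id^\sA\iso\ssd^2|_\sA$, is identified with the identity of $\Rhom RMN$ after transport along $\iota_M$ and $\iota_N$. The symmetric computation beginning at $\Rhom R{\ssd N}{\ssd M}$ shows $\ssd_{\ssd N,\ssd M}$ is also a two-sided inverse of $\ssd_{M,N}$ modulo $\iota$, so $\ssd_{M,N}$ is an isomorphism in $\dcat[R]$.

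The three conclusions of the proposition then fall out. Specializing $M=N=R$ gives $\chi^C=\eta_R\colon R\iso\Rhom RCC$, so $C$ is semidualizing. For any $M\in\sA$ the map $\eta_M\colon M\iso\Rhom R{\ssd M}C$ is exactly the displayed isomorphism in the statement. Applying $\eta$ at $\ssd M\in\sA$ gives $\ssd M\simeq\Rhom R{\ssd^2M}C\simeq\Rhom RMC$ via $\iota_M$; substituting back into $\eta_M$ produces
\[
M\simeq\Rhom R{\ssd M}C\simeq\Rhom R{\Rhom RMC}C\quad\text{in}\quad\dcat[R]\,,
\]
so $M\in\refl[C]$, and Proposition~\ref{prop:foxby} even upgrades this to the assertion that the biduality $\delta^C_M$ itself is an isomorphism.

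The main obstacle I anticipate is the bootstrap from the $R$-linear action of $\ssd$ on morphism sets to the natural morphism $\ssd_{M,N}$ between Rhom complexes in $\dcat[R]$: functoriality of $\ssd$ automatically supplies an $R$-linear map on Hom groups, but to assemble these into a morphism in $\dcat[R]$ one must use that $\ssd$ respects the internal Hom structure (equivalently, lifts to a dg-enhancement). Once this piece of structure is granted, the remaining argument is a clean exercise in duality bookkeeping powered by $\ssd^2|_\sA\simeq\id^\sA$.
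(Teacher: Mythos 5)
Your plan stands or falls on the existence of the map $\ssd_{M,N}\col\Rhom RMN\to\Rhom R{\ssd N}{\ssd M}$ as a morphism in $\dcat[R]$, and you have correctly identified that this is the weak point --- but it is not a technicality that can be ``granted'': it is exactly the structure that the hypotheses do not provide. A duality in the sense of Section 6 is only a contravariant $R$-linear exact functor on the triangulated category $\dcat[R]$ with $\ssd^2\vert_{\sA}\cong\id^{\sA}$. Such a functor acts on the morphism \emph{sets} $\Hom{\dcat[R]}XY$, and hence on the homology modules $\Ext nRXY\cong\Hom{\dcat[R]}X{\Shift^{-n}Y}$ degree by degree, but a compatible system of maps on homology does not assemble into a morphism of the complexes $\Rhom RMN$ in $\dcat[R]$ without a dg (or $\infty$-categorical) enhancement of $\ssd$, which is not assumed. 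So as written the proof has a genuine gap at its first step, and everything downstream (the invertibility chase, $\eta_N$, and the identification $\ssd M\simeq\Rhom RMC$) depends on it.

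The paper's proof sidesteps this entirely by staying at the level where $\ssd$ genuinely acts: for an $R$-\emph{module} $M$ in $\sA$ one computes, for each $n$,
\[
\Ext nR{\ssd(M)}C\cong\Hom{\dcat[R]}{\ssd(M)}{\Shift^{n}\ssd(R)}
\cong\Hom{\dcat[R]}R{\Shift^{n}\ssd^{2}(M)}\cong\Ext nRRM\,,
\]
which vanishes for $n\ne0$ and equals $M$ for $n=0$; since the homology of $\Rhom R{\ssd(M)}C$ is thus concentrated in a single degree, that complex is isomorphic to $M$ in $\dcat[R]$ --- this is why the statement's displayed isomorphism is asserted only for modules $M$, where homology determines the complex. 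Taking $M=R$ gives $\Rhom RCC\simeq R$, whence $C$ is semidualizing by Proposition~\ref{prop:semidualizing}, with no need to identify the isomorphism with $\chi^C$ (your appeal to Proposition~\ref{prop:foxby} at the end is in the same spirit and is fine). If you want to salvage your argument, you must either add the enhancement hypothesis explicitly (changing the statement) or replace the internal-Hom comparison map by the degreewise computation above.
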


  \begin{proof}
Let $M$ be an $R$-module.  For each $n\in\BZ$ one then has isomorphisms
  \begin{align*}
\Ext nR{\ssd(M)}C
&\cong \Hom{\dcat[R]}{\ssd(M)}{\Shift^{n}C}\\
&\cong\Hom{\dcat[R]}R{\Shift^{n}\ssd^2(M)}\\
&\cong \Hom {\dcat[R]}R{\Shift^{n}M}\\
&\cong \Ext nRRM\\
&\cong 
  \begin{cases}
M &\text{for }n=0\,;\\
0 &\text{for }n\ne0\,.
  \end{cases}
   \end{align*}
It follows that $\Rhom R{\ssd(M)}C$ is isomorphic to $M$ in $\dcat[R]$. For $M=R$ 
this yields $\Rhom RCC \simeq R$, so $C$ is semidualizing by Proposition~\ref{prop:semidualizing}. 
  \end{proof}

Next we show that semidualizing complexes do give rise to dualities and that,
furthermore, they are determined by their reflexive subcategories:

  \begin{ssubtheorem}
    \label{thm:dualitiesC}
Let $C$ be a semidualizing complex for $R$.

The functor $\sh_C$ is a duality on $\refl$, the natural transformation 
$\delta^C\col\id\to\sh_C^2$ restricts to an isomorphism of functors on $\refl$,
and $R$ is in $\refl$.

A complex $B$ in $\dcatb R$ satisfies $\refl[B]=\refl$ if and only if $B$ is 
derived Picard equivalent to $C$ (in which case $B$ is semidualizing). 
  \end{ssubtheorem}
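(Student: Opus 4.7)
The plan is to deduce the first assertion from the characterizations in Theorem~\ref{thm:bounded-reflexive} of derived $C$-reflexivity for semidualizing $C$, and to derive the second assertion from Theorem~\ref{thm:reflsemid}.

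For the first assertion, the inclusion $R \in \refl$ follows from $C$ being semidualizing, since then $\sh_C^2(R) \simeq \sh_C(C) = \Rhom RCC \simeq R$ via $\chi^C$. For the duality claim, fix $M \in \refl$. The existence of some isomorphism $M \simeq \sh_C^2(M)$ is precisely condition (i$'$) of Theorem~\ref{thm:bounded-reflexive}, so $M$ is derived $C$-reflexive. This has two consequences: $\delta^C_M$ itself is an isomorphism (so $\delta^C$ restricts to a natural isomorphism $\id \xra{\simeq} \sh_C^2$ on $\refl$), and $\sh_C(M) \in \dcatb R$. Applying $\sh_C$ to the given isomorphism $M \simeq \sh_C^2(M)$ produces $\sh_C(M) \simeq \sh_C^2(\sh_C(M))$, so $\sh_C(M) \in \refl$. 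Thus $\sh_C$ is an endofunctor of $\refl$, and together with the natural isomorphism $\delta^C$ this exhibits it as a duality.

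For the backward direction of the second assertion, suppose $B \simeq L \dtensor R C$ for an invertible complex $L$. Lemma~\ref{refl:operations2} gives that $B$ is semidualizing, while Lemma~\ref{refl:operations} shows that derived $C$-reflexivity coincides with derived $L \dtensor R C$-reflexivity, i.e., with derived $B$-reflexivity. The equivalence (i)$\iff$(i$'$) of Theorem~\ref{thm:bounded-reflexive} applied to both $C$ and $B$ then yields $\refl = \refl[B]$.

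Conversely, assume $\refl = \refl[B]$. The first part of the theorem gives $R \in \refl = \refl[B]$, so there exists an isomorphism $R \simeq \sh_B^2(R) \simeq \Rhom R B B$. Proposition~\ref{prop:foxby} (with $M = R$ and $C$ replaced by $B$) upgrades this to the assertion that $\delta^B_R$ is an isomorphism; since $\delta^B_R$ coincides with $\chi^B$ under the canonical identification $\Rhom R R B \simeq B$, Proposition~\ref{prop:semidualizing} yields that $B$ is semidualizing. Applying the already-proved first part now to $B$, we obtain $B \in \refl[B] = \refl$, so Theorem~\ref{thm:bounded-reflexive} implies $B$ is derived $C$-reflexive; symmetrically, $C \in \refl = \refl[B]$ implies $C$ is derived $B$-reflexive. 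Finally, $\Supp_RB = \Spec R$ by Proposition~\ref{prop:semidualizing}. All hypotheses of Theorem~\ref{thm:reflsemid}(i) are thus verified, and its conclusion delivers the desired derived Picard equivalence between $B$ and $C$. The main obstacle is precisely this converse direction, where the only input is an equality of subcategories and we must extract from it the very structured conclusion of derived Picard equivalence; the key bridge is Proposition~\ref{prop:foxby}, which promotes the abstract isomorphism $R \simeq \sh_B^2(R)$ supplied by $R \in \refl[B]$ into $\delta^B_R$ being an isomorphism, certifying $B$ as semidualizing, after which Theorem~\ref{thm:reflsemid} finishes the argument.
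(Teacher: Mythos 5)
Your proof is correct and follows the same route as the paper's (much terser) argument: the first assertion via Proposition~\ref{prop:semidualizing} and Theorem~\ref{thm:bounded-reflexive}, and the second via Theorem~\ref{thm:reflsemid}, with Proposition~\ref{prop:foxby} supplying the upgrade from an abstract isomorphism $R\simeq\sh_B^2(R)$ to the semidualizing property of $B$. The only point you leave implicit is that $C\in\refl$ (needed for the "symmetrically" step), which follows at once from Proposition~\ref{prop:semidualizing}(ii) since $C$ is semidualizing.
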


  \begin{proof}
Theorem \ref{thm:bounded-reflexive} implies that $\sh_C$ takes values in 
$\refl$ and that $\delta^C$ restricts to an isomorphism on $\refl$, while 
Proposition \ref{prop:semidualizing} shows that $R$ and $C$ are in $\refl$.

The last assertion results from Theorem \ref{thm:reflsemid}.
  \end{proof}

The preceding results raise the question whether every duality functor on 
a subcategory of $\dcatb R$ is representable on its reflexive subcategory.  

\subsection{Dualizing complexes}
  \label{Dualizing complexes}
Let $D$ be a complex in $\dcat[R]$.  

Recall that $D$ is said to be \emph{dualizing} for $R$ if it is semidualizing and of finite injective dimension.  If $D$ is dualizing, then $\refl[D]=\dcatb R$; see \cite[p.\,258, 2.1]{H}.

In the language of Hartshorne~\cite[p.\,286]{H}, the complex $D$ is \emph{pointwise dualizing} for $R$ if it is in $\dcatn R$ and the complex $D_\fp$ is dualizing for $R_\fp$ for each $\fp\in\Spec R$. When in addition $D$ is in $\dcatb R$ we say that it is \emph{strongly pointwise dualizing}; this terminology is due to Gabber; see~\cite[p.\,120 ]{Co}, also for discussion on why the latter concept is the more appropriate one. 

For a different treatment of dualizing complexes, see Neeman \cite{Ne}.

The next result is classical, see \cite[p.\,283, 7.2; p.\,286, Remark 1; p.\,288, 8.2]{H}:

  \begin{ssubchunk}
    \label{dualizing_classical}
Let $D$ be a complex in $\dcatb R$. The complex $D$ is dualizing if and 
only if it is pointwise dualizing and $\dim R$ is finite.  
 \end{ssubchunk}

The equivalence of conditions (i) and (ii) in the next result is due to Gabber, see 
\cite[3.1.5]{Co}.  Traces of his argument can be found in our 
proof, as it refers to Theorem~\ref{thm:bounded-reflexive}, and
thus depends on Theorem~\ref{thm:rfd}. 

  \begin{ssubtheorem}
    \label{thm:gabber}
For $D$ in $\dcat[R]$ the following conditions are equivalent.
 \begin{enumerate}[\quad\rm(i)]
  \item
$D$ is strongly pointwise dualizing for $R$.
  \item
$\sh_{D}$ is a duality on $\dcatb{R}$.
  \item
$D$ is in $\dcatb R$, and for each $\fm\in\Max R$ and finite $R$-module $M$ one has
  \[
M_{\fm}\simeq \Rhom{R_{\fm}}{\Rhom{R_{\fm}}{M_{\fm}}{D_{\fm}}}{D_{\fm}}
\quad\text{in}\quad \dcat[R_{\fm}]\,.
  \]
   \end{enumerate}
\end{ssubtheorem}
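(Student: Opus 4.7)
The plan is to prove the cyclic chain \textup{(i)}$\Rightarrow$\textup{(ii)}$\Rightarrow$\textup{(iii)}$\Rightarrow$\textup{(i)}, exploiting the local-to-global principle from Theorem~\ref{thm:bounded-reflexive} and Gabber's characterization of finite injective dimension in Proposition~\ref{prop:gabber}. The implication \textup{(ii)}$\Rightarrow$\textup{(iii)} is essentially a tautology: applying the duality to $R$ shows $D\simeq\sh_D(R)\in\dcatb R$, and every finite $R$-module $M$ lies in $\dcatb R$, so the duality provides a biduality isomorphism $M\simeq\sh_D^2(M)$; since $M$ and $\sh_D(M)$ both lie in $\dcatb R$, this isomorphism localizes at each $\fm\in\Max R$ to the required isomorphism over $R_\fm$.

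For \textup{(i)}$\Rightarrow$\textup{(ii)} I would first note that each $D_\fm$ with $\fm\in\Max R$ is semidualizing for $R_\fm$, so Proposition~\ref{prop:semidualizing} makes $D$ itself semidualizing. Given $M\in\dcatb R$ and $\fm\in\Max R$, because $D_\fm$ is dualizing for $R_\fm$ the biduality morphism $\delta^{D_\fm}_{M_\fm}$ is an isomorphism by definition, hence so is the underlying isomorphism $M_\fm\simeq\sh_{D_\fm}^2(M_\fm)$. Theorem~\ref{thm:bounded-reflexive}~\textup{(iii)}$\Rightarrow$\textup{(i)} then promotes this to the global assertion that $M$ is derived $D$-reflexive; this means $\sh_D$ sends $\dcatb R$ into itself and $\delta^D$ restricts to a natural isomorphism there, so $\sh_D$ is a duality on $\dcatb R$.

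The substantive direction is \textup{(iii)}$\Rightarrow$\textup{(i)}. First, specializing (iii) to $M=R$ yields $R_\fm\simeq\Rhom{R_\fm}{D_\fm}{D_\fm}$ for each maximal $\fm$; Proposition~\ref{prop:semidualizing} then identifies $D$ as semidualizing, so every $D_\fp$ is semidualizing for $R_\fp$ and homologically bounded. It remains to show that each $D_\fp$ has finite injective dimension. Fix $\fm\in\Max R$, set $k=R_\fm/\fm R_\fm$, and apply (iii) with $M=R/\fm$ to obtain $k\simeq\Rhom{R_\fm}{E}{D_\fm}$, where $E:=\Rhom{R_\fm}{k}{D_\fm}$. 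Since $D_\fm$ is homologically bounded and $k$ is a module, each $\Ext^n_{R_\fm}(k,D_\fm)$ is a finite $k$-vector space and vanishes for $n\ll 0$, placing $E$ in $\dcatn{R_\fm}$. Theorem~\ref{semid:dualfinite}, applied to the semidualizing complex $D_\fm$ and to $L=E$, then forces $E\in\dcatb{R_\fm}$, i.e., $\Ext^n_{R_\fm}(k,D_\fm)=0$ for $n\gg 0$. Proposition~\ref{prop:gabber}~\textup{(ii$'$)}$\Rightarrow$\textup{(i)}, applied over $R_\fm$ with $N=D_\fm$, now yields that $D_\fq$ has finite injective dimension over $R_\fq$ for every prime $\fq\subseteq\fm$; as every prime of $R$ lies in some maximal ideal, the required finite injective dimension holds at every $\fp\in\Spec R$.

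The main obstacle is the implication \textup{(iii)}$\Rightarrow$\textup{(i)}: one must convert a purely qualitative biduality hypothesis on finite $R$-modules at maximal ideals into the \emph{quantitative} vanishing of $\Ext^n_{R_\fm}(k,D_\fm)$ for $n\gg 0$ needed to unlock Gabber's criterion. The bridge is Theorem~\ref{semid:dualfinite}, which converts reflexivity of the residue field $k$ into the homological boundedness of $E=\Rhom{R_\fm}{k}{D_\fm}$, so that Proposition~\ref{prop:gabber} becomes applicable.
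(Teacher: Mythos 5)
Your proposal is correct and follows essentially the same route as the paper: the substantive direction (iii)$\Rightarrow$(i) is handled exactly as in the text by specializing to $M=R$ (semidualizing via Proposition~\ref{prop:semidualizing}) and to $M=R/\fm$ to force $\Rhom{R_\fm}{k}{D_\fm}$ into $\dcatb{R_\fm}$ and then invoking the Bass-series criterion for finite injective dimension. The only differences are cosmetic: you arrange the implications cyclically, use Theorem~\ref{semid:dualfinite} where the paper applies Lemma~\ref{lem:bass} directly (the former is proved from the latter), and package the final localization step as an appeal to Proposition~\ref{prop:gabber}.
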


  \begin{proof}
(i)$\implies$(iii). By definition, $D\in\dcatb R$  and
$D_\fm$ is dualizing for $R_\fm$. Moreover, it is clear that
$M_\fm\in\dcatb{R_\fm}=\refl[D_\fm]$.

(iii)$\implies$(i). Let $\fm$ be a maximal ideal of $R$.  For
$M_\fm=R_\fm$ the hypothesis implies that $D_\fm$ is semi\-dualizing,
see Proposition~\ref{prop:semidualizing}. For $M=R/\fm$ it implies, by
the first part of Lemma~\ref{lem:bass}, that $\Rhom{R_\fm}{R_\fm/\fm
R_\fm}{D_\fm}\in\dcatb{R_\fm}$; this means that $D_\fm$ has finite
injective dimension over $R_\fm$, see \ref{Bass series}. Localization shows
that $D_\fp$ has the corresponding properties for every prime ideal $\fp$
of $R$, contained in $\fm$.

(iii)$\iff$(ii).  
The complex $D$ is semidualizing---by Proposition~\ref{prop:semidualizing}
if (iii) holds, by Proposition~\ref{prop:dualitiesd} if (ii) holds;
so the equivalence results from Theorem~\ref{thm:bounded-reflexive}.
   \end{proof}


  \begin{ssubcorollary}
    \label{cor:goto_derived}
The ring $R$ is Gorenstein if and only if the complex $R$ is strongly pointwise
dualizing, if and only if each complex in $\dcatb R$ is derived 
$R$-reflexive.
 \end{ssubcorollary}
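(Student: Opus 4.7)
The plan is to deduce both equivalences by specializing Theorem~\ref{thm:gabber} to $D=R$, then unpacking the definitions of strongly pointwise dualizing and derived $R$-reflexivity.

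First I would handle $R$ \emph{strongly pointwise dualizing} $\iff$ $R$ \emph{Gorenstein}. The complex $R$ manifestly lies in $\dcatb R$, and for every prime $\fp$ the homothety $\chi^{R_\fp}\col R_\fp\to\Rhom{R_\fp}{R_\fp}{R_\fp}$ is an isomorphism, so $R_\fp$ is always semidualizing for $R_\fp$. Thus $R_\fp$ being dualizing for $R_\fp$ reduces to $R_\fp$ having finite injective dimension over itself, which is exactly the defining property of a Gorenstein ring. (To check this pointwise on $\Spec R$ it is enough to check at maximal ideals, by Proposition~\ref{prop:gabber}, applied with $N=R$.)

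Next I would treat strongly pointwise dualizing of $R$ $\iff$ every complex in $\dcatb R$ is derived $R$-reflexive. By Theorem~\ref{thm:gabber} with $D=R$, strong pointwise dualizing of $R$ is equivalent to $\sh_R=\Rhom R-R$ being a duality on $\dcatb R$, that is, $\sh_R(\dcatb R)\subseteq\dcatb R$ and $\sh_R^2\vert_{\dcatb R}\simeq\id^{\dcatb R}$. If this duality holds, then for each $M\in\dcatb R$ one has $\Rhom RMR\in\dcatb R$ and there exists \emph{some} isomorphism $M\simeq\sh_R^2(M)$; Proposition~\ref{prop:foxby} applied with $C=R$ then upgrades this to the statement that the canonical biduality morphism $\delta^R_M$ is itself an isomorphism, so $M$ is derived $R$-reflexive. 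Conversely, if every $M\in\dcatb R$ is derived $R$-reflexive, then $\sh_R$ sends $\dcatb R$ into itself and $\delta^R$ provides a natural isomorphism $\id\Rightarrow\sh_R^2$ on $\dcatb R$, so $\sh_R$ is a duality.

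The only subtlety is passing from the bare existence of \emph{an} isomorphism $M\simeq\sh_R^2(M)$ (coming from the abstract notion of duality) to the canonical biduality morphism $\delta^R_M$ being an isomorphism, which is exactly the content of Proposition~\ref{prop:foxby}; once this is in hand, the corollary is a direct bookkeeping read-off of Theorem~\ref{thm:gabber} in the case $D=R$.
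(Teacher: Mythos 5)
Your argument is correct and follows the paper's own route: the first equivalence is the observation that $R_\fp$ is automatically semidualizing for $R_\fp$, so pointwise dualizing reduces to finite self-injective dimension (the definition of Gorenstein), and the second equivalence is Theorem~\ref{thm:gabber} with $D=R$. Your explicit use of Proposition~\ref{prop:foxby} to pass from the existence of some isomorphism $M\simeq\sh_R^2(M)$ to the canonical $\delta^R_M$ being an isomorphism just makes explicit what the paper leaves implicit in citing Theorem~\ref{thm:gabber}.
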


 \begin{proof}
For arbitrary $R$ and $\fp\in\Spec R$, the complex $R_\fp$ is
semi-dualizing for $R_\fp$.  Thus, the first two conditions are
equivalent because---by definition---the ring $R$ is Gorenstein if
and only if $R_\fp$ has a finite injective resolution as a module over
itself for each $\fp$.  The second and third conditions are equivalent
by Theorem~\ref{thm:gabber}.
 \end{proof}

Given a homomorphism $R\to S$ of rings, recall that $\Rhom RS-$ is
a functor from $\dcat[R]$ to $\dcat$. The next result is classical,
cf.~\cite[p.\,260, 2.4]{H}.

  \begin{ssubcorollary}
    \label{cor:kawasaki}
If $R\to S$ is a finite homomorphism of rings and $D\in\dcatb
R$ is pointwise dualizing for $R$, then $\Rhom RSD$ is  pointwise
dualizing for $S$.
 \end{ssubcorollary}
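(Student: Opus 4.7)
The plan is to verify the pointwise-dualizing conditions prime-by-prime on $S$.  Membership of $\Rhom RSD$ in $\dcatN S$ is immediate: since $S$ is a module concentrated in degree zero and $D\in\dcatb R$, one has $\HH i{\Rhom RSD}=\Ext{-i}RSD=0$ for $i>\sup\hh D$.  Thus it remains to show, for each $\fq\in\Spec S$, that $(\Rhom RSD)_\fq$ is dualizing for $S_\fq$.

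Fix $\fq\in\Spec S$ and put $\fp=\fq\cap R$.  Because $S$ is a finite $R$-module, $\Rhom RS-$ commutes with localization at $R\setminus\fp$, giving
\[
(\Rhom RSD)_\fp\simeq\Rhom{R_\fp}{S_\fp}{D_\fp}\quad\text{in}\quad\dcat[S_\fp]\,.
\]
By hypothesis $D_\fp$ is dualizing for the local ring $R_\fp$, and $S_\fp$ is finite over $R_\fp$.  The heart of the argument is the classical local base-change claim: $\Rhom{R_\fp}{S_\fp}{D_\fp}$ is dualizing for $S_\fp$.

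To prove that, first pick a bounded injective resolution $D_\fp\simeq J$ over $R_\fp$; then $\Hom{R_\fp}{S_\fp}{J}$ is a bounded complex of injective $S_\fp$-modules, since $\Hom{R_\fp}{S_\fp}{-}$ sends $R_\fp$-injectives to $S_\fp$-injectives.  This shows $\Rhom{R_\fp}{S_\fp}{D_\fp}$ is in $\dcatb{S_\fp}$, has finite injective dimension over $S_\fp$, and has homology finite over $S_\fp$.  Second, for the semidualizing property, apply the derived adjunction
\[
\Rhom{S_\fp}{-}{\Rhom{R_\fp}{S_\fp}{D_\fp}}\simeq\Rhom{R_\fp}{-}{D_\fp}
\]
with argument $\Rhom{R_\fp}{S_\fp}{D_\fp}$; since $D_\fp$ is dualizing, every complex in $\dcatb{R_\fp}$ is derived $D_\fp$-reflexive, so the right-hand side collapses to $S_\fp$, and tracking the homothety map through the adjunction yields the biduality isomorphism demanded by Proposition~\ref{prop:semidualizing}.

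Finally, localize once more at $\fq S_\fp$, so that $S_\fq=(S_\fp)_{\fq S_\fp}$.  Proposition~\ref{prop:semidualizing}(iv) preserves semidualizing under localization, and a bounded complex of injective $S_\fp$-modules localizes to one of injectives over $S_\fq$; hence $(\Rhom RSD)_\fq\simeq(\Rhom{R_\fp}{S_\fp}{D_\fp})_{\fq S_\fp}$ is dualizing for $S_\fq$, as required.  The main obstacle I anticipate is the semidualizing step in the local base-change: it rests on the derived adjunction being functorially compatible with the homothety map and on the ambient $D_\fp$-reflexivity.  By contrast, the commutation of $\Rhom$ with localization for a finite first argument and the injective-preservation of $\Hom{R_\fp}{S_\fp}{-}$ are routine.
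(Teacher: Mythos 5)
Your proof is correct, but it takes a genuinely different route from the paper's. The paper's argument is a three-line reduction at the level of derived categories: setting $D'=\Rhom RSD$, the adjunction $\Rhom RMD\simeq\Rhom SM{D'}$ for $M\in\dcatb S$ shows that the restriction of $\sh_D$ to $\dcatb S$ is the functor $\sh_{D'}$; since $D$ is strongly pointwise dualizing, $\sh_D$ is a duality on $\dcatb R$ by Theorem~\ref{thm:gabber}, hence $\sh_{D'}$ is a duality on $\dcatb S$, and the converse direction of the same theorem gives that $D'$ is (strongly) pointwise dualizing. You instead run the classical local argument going back to Hartshorne: localize at each $\fq\in\Spec S$, replace $D_\fp$ by a bounded injective resolution $J$, use that $\Hom{R_\fp}{S_\fp}J$ is a bounded complex of injective $S_\fp$-modules with finite homology, and extract the semidualizing property from the adjunction combined with derived $D_\fp$-reflexivity of $S_\fp$. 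The paper's approach buys brevity and avoids injective resolutions altogether, at the price of invoking Theorem~\ref{thm:gabber} (and hence Theorem~\ref{thm:rfd}); yours is self-contained modulo standard facts about injectives and verifies the definition of dualizing directly prime by prime. Two small points on your write-up: pointwise dualizing requires membership in $\dcatn S$, so besides boundedness above you should record that each $\Ext iRSD$ is a finite $R$-module, hence a finite $S$-module (you do address the local analogue later); and in the semidualizing step you need not track the homothety through the adjunction, since Proposition~\ref{prop:semidualizing}(iii) asks only for the \emph{existence} of an isomorphism at each maximal ideal of the semilocal ring $S_\fp$, which your computation already supplies.
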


 \begin{proof}
Set $D'= \Rhom RSD$.  For each $M$ in $\dcatb S$ one has
  \[
\Rhom RMD\simeq \Rhom SM{D'}
 \quad\text{in}\quad \dcat\,.
  \]
It shows that $\Rhom SM{D'}$ is in $\dcatb S$, and that
the restriction of $\sh_D$ to $\dcatb S$ is equivalent to
$\sh_{D'}$. Theorem~\ref{thm:gabber} then shows that $D'$ is pointwise
dualizing.
 \end{proof}

It follows from Corollaries~\ref{cor:goto_derived} and \ref{cor:kawasaki} that if 
$S$ is a homomorphic image of a Gorenstein ring, then it admits a strongly pointwise 
dualizing complex. Kawasaki \cite[1.4]{Kw} proved that if $S$ has a 
\emph{dualizing complex}, then $S$ is a homomorphic image of some 
Gorenstein ring \emph{of finite Krull dimension}, so we ask:

 \begin{ssubquestion}
Does the existence of a strongly pointwise dualizing complex for $S$ imply that $S$ is a homomorphic image of some Gorenstein ring?
 \end{ssubquestion}

  \subsection{Finite G-dimension}
    \label{Gorenstein dimension}

The category $\refl[R]$ of derived $R$-reflexive complexes contains all perfect complexes, but may be larger.  To describe it we use a notion from module theory:  An $R$-module $G$ is \emph{totally reflexive} when 
it is finite,
\begin{alignat*}{2}
   \Hom R{\Hom RGR}R&\cong G &\quad &\text{and}\\
   \Ext nR{\Hom RGR}R&=0=\Ext nRGR &\quad &\text{for all}\quad n\ge1\,.
\end{alignat*}

A complex of $R$-modules is said to have \emph{finite G-dimension} (for 
\emph{Gorenstein dimension}) if it is quasi-isomorphic to a bounded complex 
of totally reflexive modules. The study of modules of finite $G$-dimension was 
initiated by Auslander and Bridger~\cite{AB}. The next result, taken from 
\cite[2.3.8]{Ch1}, is due to Foxby:

 \begin{ssubchunk}
   \label{gd=reflexive}
A complex in $\dcat[R]$ is in $\refl[R]$ if and only if it has finite $G$-dimension.
 \end{ssubchunk}

Theorems  \ref{thm:refl} and \ref{thm:bounded-reflexive} specialize to:

\begin{ssubtheorem}
\label{thm:Gperfect}
  \pushQED{\qed}
For a complex $M\in\dcatb R$ the following are equivalent.
\begin{enumerate}[\quad\rm(i)]
  \item
$M$ is derived $R$-reflexive.
  \item
$\Rhom RMR$ is derived $R$-reflexive.
  \item
For each $\fm\in\Max R$ there is an isomorphism
  \[
M_{\fm}\simeq \Rhom{R_{\fm}}{\Rhom{R_{\fm}}{M_{\fm}}{R_{\fm}}}{R_{\fm}}
\quad\text{in}\quad \dcat[R_{\fm}]\,.
  \]
  \item
$U^{-1}M$ is derived $U^{-1}R$-reflexive for each multiplicatively closed set $U$.\qed
  \end{enumerate}
    \end{ssubtheorem}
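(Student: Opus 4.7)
The plan is to recognize the statement as a direct specialization of the two main results of this section, Theorem \ref{thm:refl} and Theorem \ref{thm:bounded-reflexive}, to the case $C=R$. The preliminary observation I would record is that $R$ itself is a semidualizing complex: the homothety $\chi^R\colon R\to\Rhom RRR$ is the canonical identification $r\mapsto(s\mapsto rs)$, so Proposition \ref{prop:semidualizing}(i) applies. It is also worth noting that $\Supp_R R=\Spec R$, so the support hypotheses appearing in Theorem \ref{thm:refl}(ii) and in the related conditions become vacuous in the present setting.

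With this in hand, I would deduce the equivalences (i)$\iff$(ii)$\iff$(iii) directly from the corresponding equivalences in Theorem \ref{thm:bounded-reflexive}: when $C$ is replaced by $R$ the statements of conditions (i), (ii), (iii) in that theorem match those of the present theorem verbatim. For the equivalence (i)$\iff$(iv) I would invoke Theorem \ref{thm:refl}, whose conditions (i) and (iv) coincide with (i) and (iv) here upon specialization to $C=R$; the auxiliary support containment $\Supp_RM\subseteq\Supp_RC$ in Theorem \ref{thm:refl}(ii) costs nothing, since it reduces to $\Supp_RM\subseteq\Spec R$.

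I do not expect any genuine obstacle. The whole point of having proved Theorems \ref{thm:refl} and \ref{thm:bounded-reflexive} in the strong form stated—characterizing derived $C$-reflexivity by local data at maximal ideals and by stability under arbitrary localization—is that specialized consequences such as the present one are immediate corollaries. Any real difficulty has already been absorbed into the proofs of those two theorems, most notably the use of Theorem \ref{thm:rfd} on the finiteness of $\rfd RM$, which is what allows condition (iii) here to be stated purely as a local existence of a biduality isomorphism, without any a priori boundedness assumption on $\Rhom RMR$.
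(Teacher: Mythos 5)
Your proposal is correct and matches the paper exactly: the paper states this theorem as an immediate specialization of Theorems \ref{thm:refl} and \ref{thm:bounded-reflexive} to $C=R$, with no further argument needed. Your observations that $R$ is semidualizing and that $\Supp_RR=\Spec R$ makes the support conditions vacuous are precisely the points that make the specialization go through.
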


Combining \ref{gd=reflexive} and Corollary~\ref{cor:goto_derived}, we obtain a 
new proof of a result due to Auslander and Bridger \cite[4.20]{AB} (when $\dim R$ 
is finite) and to Goto \cite{Go} (in general):

\begin{ssubcorollary}
 \label{cor:goto_original}
The ring $R$ is Gorenstein if and only if every finite $R$-module has finite G-dimension. \qed
 \end{ssubcorollary}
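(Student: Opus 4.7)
The proof will be an essentially formal combination of two ingredients already in place: \ref{gd=reflexive}, which identifies finite $G$-dimension with derived $R$-reflexivity, and Corollary~\ref{cor:goto_derived}, which characterizes Gorenstein rings as precisely those over which every complex in $\dcatb R$ is derived $R$-reflexive. Together these reduce the corollary to the assertion that, for $R$ noetherian, the condition ``every finite $R$-module has finite $G$-dimension'' is equivalent to the \emph{a priori} stronger ``every complex in $\dcatb R$ has finite $G$-dimension.''

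The forward implication of that equivalence is trivial, since a finite $R$-module sits inside $\dcatb R$ as a complex concentrated in degree zero. For the converse my plan is to run exactly the amplitude dévissage used at the start of the proof of Theorem~\ref{thm:rfd}. Since $R$ is noetherian, any $M \in \dcatb R$ is quasi-isomorphic to a bounded complex of finite $R$-modules, which I may further assume satisfies $\amp M = \amp \hh M$. When $\amp M = 0$, $M$ is a shift of a finite $R$-module, so has finite $G$-dimension by hypothesis. When $\amp M > 0$, setting $i = \inf M$, the subcomplex $L = \Shift^i M_i$ is the shift of a finite $R$-module and the quotient satisfies $\amp(M/L) < \amp M$, so the short exact sequence
\[
0 \to L \to M \to M/L \to 0
\]
together with the fact---recorded in the proof of Theorem~\ref{thm:perfect-reflexive}---that derived $R$-reflexivity is closed under extensions in exact triangles lets induction on amplitude finish the job. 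Corollary~\ref{cor:goto_derived} then supplies the Gorenstein conclusion.

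I do not anticipate any genuine obstacle: both \ref{gd=reflexive} and Corollary~\ref{cor:goto_derived} do the conceptual work, and the amplitude dévissage is identical in structure to the one already deployed for Theorem~\ref{thm:rfd}. The only minor point to be careful about is arranging that the module $M_i$ in the inductive step is actually finite, which is why one replaces $M$ at the outset by a bounded complex of finite $R$-modules rather than simply by one of matching amplitude.
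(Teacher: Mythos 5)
Your argument is correct, and its skeleton (combine \ref{gd=reflexive} with Corollary~\ref{cor:goto_derived}) is the paper's; the difference lies entirely in how you bridge the gap between ``every finite module is derived $R$-reflexive'' and ``every complex in $\dcatb R$ is derived $R$-reflexive.'' You close that gap by an amplitude d\'evissage: replace $M\in\dcatb R$ by a bounded complex of finite modules, peel off the bottom term as a subcomplex, and use the two-out-of-three property of derived reflexivity in exact triangles (recorded in the proof of Theorem~\ref{thm:perfect-reflexive}). That is a complete and self-contained reduction. The paper, however, never needs this step: condition (iii) of Theorem~\ref{thm:gabber} is deliberately phrased so as to involve only \emph{finite $R$-modules} $M$ and their localizations at maximal ideals, so the hypothesis that every finite module has finite G-dimension feeds directly into (iii) with $D=R$ (after localizing the biduality isomorphism, which is legitimate since $\Rhom RMR$ is then in $\dcatp R$), and (iii)$\implies$(i)$\implies$(ii) of that theorem does the rest. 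In other words, your induction re-proves, in the special case $C=R$, a passage from modules to bounded complexes that Theorem~\ref{thm:gabber} was engineered to make unnecessary. What your route buys is independence from the specific formulation of condition (iii) and a concrete picture of why module-level reflexivity propagates to all of $\dcatb R$; what it costs is redundancy, since you still invoke Corollary~\ref{cor:goto_derived}, whose proof funnels through the very implication (iii)$\implies$(i) you are working around. Your one point of care---arranging that the bottom term $M_i$ is a finite module before peeling it off---is exactly right and matches the same manoeuvre in the proof of Theorem~\ref{thm:rfd}.
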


It is easy to check that if a complex $M$ has finite $G$-dimension over $R$, then so does the complex of $R_{\fp}$-modules $M_{\fp}$, for any prime ideal $\fp$. Whether the converse holds  had been an  open question, which we settle as a corollary of \ref{gd=reflexive} and Theorem~\ref{thm:Gperfect}:

\begin{ssubcorollary}
\label{cor:localGdim}
A homologically finite complex $M$ has finite G-dimension if\/ $($and only if\/$)$ the
complex $M_\fm$ has finite G-dimension over $R_\fm$ for every $\fm\in \Max R$. \qed
 \end{ssubcorollary}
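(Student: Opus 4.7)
The forward implication is essentially standard: if $G\to M$ is a quasi-isomorphism with $G$ a bounded complex of totally reflexive $R$-modules, then localizing at any $\fm\in\Max R$ produces a bounded complex $G_\fm$ of $R_\fm$-modules, and one checks directly from the definitions (using that $\Hom{R}{-}{R}$ commutes with localization for finite modules) that each $(G_i)_\fm$ is totally reflexive over $R_\fm$. So I would dispatch this direction with a one-line remark.

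For the substantive direction, the plan is to reduce immediately to a reflexivity statement already proved. By~\ref{gd=reflexive}, a homologically finite complex has finite $G$-dimension over $R$ if and only if it lies in $\refl[R]$, i.e., is derived $R$-reflexive; the same equivalence holds locally at each $\fm$, applied over $R_\fm$. So the hypothesis that $M_\fm$ has finite $G$-dimension over $R_\fm$ for every $\fm\in\Max R$ translates to an isomorphism
\[
M_\fm \simeq \Rhom{R_\fm}{\Rhom{R_\fm}{M_\fm}{R_\fm}}{R_\fm} \quad\text{in}\quad \dcat[R_\fm]
\]
for every maximal ideal $\fm$. This is precisely condition (iii) of Theorem~\ref{thm:Gperfect}.

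Invoking the implication (iii)$\implies$(i) of that theorem yields that $M$ itself is derived $R$-reflexive, and a second application of~\ref{gd=reflexive} converts this back into the desired conclusion that $M$ has finite $G$-dimension over $R$. There is no genuine obstacle once Theorem~\ref{thm:Gperfect} is in hand; all the work is hidden in the chain Theorem~\ref{thm:rfd}$\,\Rightarrow\,$Theorem~\ref{thm:bounded-reflexive}$\,\Rightarrow\,$Theorem~\ref{thm:Gperfect}, whose crucial input is the fact that $R$ is semidualizing over itself (so that the boundedness of $\Rhom{R}{M}{R}$ need not be assumed locally but is automatic from Theorem~\ref{semid:dualfinite} and finiteness of $\rfd{R}{M}$). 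The proof is therefore a two-step translation sandwiching a single citation of Theorem~\ref{thm:Gperfect}.
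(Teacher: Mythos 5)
Your proposal is correct and follows exactly the paper's route: the corollary is stated there as an immediate consequence of \ref{gd=reflexive} together with Theorem~\ref{thm:Gperfect}, i.e., translating finite G-dimension into derived $R$-reflexivity locally, applying the implication (iii)$\implies$(i), and translating back. Your remark that the real content lies in the automatic boundedness of $\Rhom RMR$ (via Theorems~\ref{thm:rfd} and~\ref{thm:bounded-reflexive}) is precisely the point the authors emphasize.
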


\section{Rigidity}
 \label{Rigidity}

Over any commutative ring, we introduce a concept of rigidity of one 
complex relative to another, and establish the properties responsible for the name. 
In \S\ref{Relative rigidity} we show how to recover the notion of rigidity for complexes 
over commutative algebras, defined by Van den Bergh, Yekutieli and Zhang. 

Let $C$ be a complex in $\dcat[R]$.  We say that a complex $M$ in $\dcat[R]$ is
\emph{$C$-rigid} if there exists an isomorphism
 \begin{equation}
   \label{eq:rigid}
\mu\col M\xra{\,\simeq\,}\Rhom R{\Rhom RMC}M\quad\text{in}\quad \dcat 
[R]\,.
  \end{equation}
In such a case, we call $\mu$ a \emph{$C$-rigidifying isomorphism} 
and $(M,\mu)$ a $C$-\emph{rigid pair}.

\begin{example}
\label{exa:rigidity}
Let $C$ be a semidualizing complex.  For each idempotent element 
$a\in R$, using \eqref{eq:homothety} and \ref{ch:idempotents} 
one obtains a canonical composite isomorphism
 \begin{gather*}
\xymatrixcolsep{6.2pc}
\xymatrixrowsep{.2pc}
\xymatrix {
\gamma_a\col C_a\:\ar@{->}[r]^-{\simeq}
&\,\Rhom RR{C_a}{\phantom{xxxxxxxxxx}}
\\
{\phantom{\gamma_a\col C_a}\:}
\ar@{->}[r]^-{\ \Rhom R{\chi^{C}}{C_a}^{-1}\,}
&\:\Rhom R{\Rhom RCC}{C_a}{\phantom{x}}
\mspace{-4mu} \\
{\phantom{\gamma_a\col C_a}\:}\ar@{->}[r]^-{\simeq}
&\:\Rhom R{\Rhom R{C_a\oplus C_{1-a}}{\>C}}{C_a}\,
\mspace{-68.5mu}\\
{\phantom{\gamma_a\col C_a}\:}\ar@{->}[r]^-{\simeq}
&\;\Rhom R{\Rhom R{C_a\>}C}{C_a}\,.\mspace{-11mu}
}
 \end{gather*}
Thus, for each idempotent $a$ there exists a \emph{canonical $C$-rigid pair}
$(C_a\>,\gamma_a)$.
  \end{example}

\begin{theorem}
\label{thm:rigidity}
Let $C$ be a semidualizing complex.

A complex $M\in\dcatb R$ is $C$-rigid if and only if it satisfies
  \begin{equation}
  \label{eq:rigidifier}
M\simeq C_a\quad\text{in}\quad \dcat[R]
 \end{equation}
for some idempotent $a$ in $R$; such an idempotent is
determined by the condition
  \begin{equation}
  \label{eq:locus}
\Supp_RM=\{\fp\in\Spec R\mid \fp\not\ni a\}\,.
 \end{equation}
  \end{theorem}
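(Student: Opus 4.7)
The ``if'' direction is immediate from Example~\ref{exa:rigidity}: for every idempotent $a$, the pair $(C_a,\gamma_a)$ is $C$-rigid, and any isomorphism $M\simeq C_a$ transports $\gamma_a$ to a $C$-rigidifying isomorphism for $M$. The support equality \eqref{eq:locus} then follows from $\Supp_R C_a = D_a$, recorded in \ref{ch:idempotents}.

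For the converse, suppose $M\in\dcatb R$ is $C$-rigid via $\mu\col M\xra{\simeq}\Rhom RNM$, where $N:=\Rhom RMC$. My plan is to first apply Theorem~\ref{thm:invertible} to prove that $N$ is an invertible module over a localization $R_a$, then use a Picard-group squaring trick to force $N\simeq R_a$, from which $M\simeq C_a$ follows by semidualizing reflexivity. The hypotheses of Theorem~\ref{thm:invertible} (with the role of its ``$L$'' played by $N$) are satisfied: $N\in\dcatN R$ since $M,C\in\dcatb R$, and $\Supp_R N\subseteq \Supp_R M\cap\Supp_R C=\Supp_R M$ because $\Supp_R C=\Spec R$ by Proposition~\ref{prop:semidualizing}. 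The theorem then produces an idempotent $a$ with $\Supp_R M=D_a=\Supp_R N$ and an invertible $R_a$-module $L$ such that $N\simeq L$ in $\dcat[R]$; in particular $M\simeq M_a$ by \ref{ch:idempotents}.

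After this reduction, localization of $\mu$ together with Proposition~\ref{prop:semid}(vi) yields
\[
M_a\simeq\Rhom{R_a}{L}{M_a}\simeq L^{-1}\dtensor{R_a}M_a\,.
\]
Apply $\Rhom{R_a}{-}{C_a}$ to both sides: by adjunction and \ref{picard} the right-hand side becomes $L\dtensor{R_a}\Rhom{R_a}{M_a}{C_a}$, while the left-hand side is $\Rhom{R_a}{M_a}{C_a}\simeq N_a\simeq L$. Hence $L\simeq L\dtensor{R_a}L$, so in $\dpic{R_a}$ one has $[L]=[L]^2$, which forces $[L]=[R_a]$, i.e.\ $L\simeq R_a$. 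Consequently $\Rhom{R_a}{M_a}{C_a}\simeq R_a$; since $C_a$ is semidualizing over $R_a$ (Proposition~\ref{prop:semidualizing}), the complex $R_a$ is derived $C_a$-reflexive, so Theorem~\ref{thm:bounded-reflexive} shows that $M_a$ is too, and therefore
\[
M\simeq M_a\simeq\Rhom{R_a}{\Rhom{R_a}{M_a}{C_a}}{C_a}\simeq\Rhom{R_a}{R_a}{C_a}\simeq C_a
\quad\text{in}\quad\dcat[R]\,.
\]
The main delicate point is the squaring step: one must ensure that the adjunction-plus-invertibility identification really delivers an equation of the form $[L]=[L]^2$ in $\dpic{R_a}$ (rather than something twisted involving $L^{-1}$), because it is precisely this tautology in the group $\dpic{R_a}$ that collapses the argument and makes $L$ trivial.
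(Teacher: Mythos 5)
Your argument is correct, and its decisive first step --- applying Theorem~\ref{thm:invertible} to $N=\Rhom RMC$ to produce the idempotent $a$ and the invertibility of $N_a$ --- is exactly the one in the paper; the two proofs differ only in the endgame. You first determine $N_a$ outright: applying $\Rhom{R_a}{-}{C_a}$ to the localized rigidifying isomorphism $M_a\simeq L^{-1}\dtensor{R_a}M_a$ gives $L\simeq L\dtensor{R_a}L$, and the relation $[L]=[L]^2$ in the group $\dpic{R_a}$ forces $L\simeq R_a$; Theorem~\ref{thm:bounded-reflexive} and biduality then yield $M_a\simeq\Rhom{R_a}{R_a}{C_a}\simeq C_a$. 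The paper never computes $L_a$: it notes via Lemma~\ref{refl:operations} that the invertible complex $L_a$ is derived $C_a$-reflexive, deduces from Theorem~\ref{thm:bounded-reflexive} that $M_a$ is as well, and then cancels $L_a$ from $\Rhom{R_a}{L_a}{C_a}\simeq M_a\simeq\Rhom{R_a}{L_a}{M_a}$ using the last observation in \ref{picard}. Your squaring step is sound --- the adjunction together with $(L^{-1})^{-1}\simeq L$ really does give $[L]=[L]^2$ and not a twisted relation --- and it has the minor bonus of exhibiting $\Rhom{R_a}{M_a}{C_a}\simeq R_a$ explicitly, at the cost of one more manipulation than the paper's cancellation. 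The only nitpick: Theorem~\ref{thm:invertible} requires its input to lie in $\dcatn R$, i.e.\ to be homologically finite and not merely bounded above; this does hold for $\Rhom RMC$ since $M$ and $C$ are in $\dcatb R$, but it is worth saying.
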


\begin{proof}
The `if' part comes from Example \ref{exa:rigidity}, so assume that $M 
$ is $C$-rigid.

Set $L=\Rhom RMC$ and let $M\simeq\Rhom RLM$ be a rigidifying isomorphism. Theorem~\ref{thm:invertible} produces a unique idempotent $a$ in $R$ satisfying \eqref{eq:locus}, and such that the complex $L_a$ is invertible in $\dcat[R_a]$. Hence, $L_{a}$ is derived $C_{a}$-reflexive in $\dcat[R_{a}]$ by Lemma~\ref{refl:operations}.   Thus, $\Rhom{R_{a}}{M_{a}}{C_{a}}$ is derived $C_{a}$-reflexive, and hence so is
$M_a$, by Theorem~\ref{thm:bounded-reflexive}. This explains the second isomorphism below:
  \[
\Rhom {R_{a}}{L_{a}}{C_{a}}
 \simeq\Rhom {R_a}{\Rhom {R_a}{M_a}{C_a}}{C_a}
\simeq M_{a}\simeq\Rhom{R_{a}}{L_{a}}{M_{a}}\,.
  \]
The third one is a localization of the rigidifying isomorphism.
Consequently $M_{a}\simeq C_{a}$ in $\dcat[R_{a}]$; see \ref{picard}. It
remains to note that one has $M\simeq M_a$ in $\dcat[R]$; see
\ref{ch:idempotents}.
 \end{proof}

A \emph{morphism of $C$-rigid pairs} is a commutative diagram
\[
\xymatrixcolsep{4.5pc}
\xymatrixrowsep{2.5pc}
(\alpha)=
\begin{gathered}
\xymatrix {
M\ar@{->}[r]^-{\mu}\ar@{->}[d]^{\alpha}
&\Rhom R{\Rhom RMC}M\ar@{->}[d]^{\Rhom R{\Rhom R{\alpha}C}{\alpha}}
 \\
N\ar@{->}[r]^-{\nu}
&\Rhom R{\Rhom RNC}N
}
\end{gathered}
\]
in $\dcat[R]$.  The $C$-rigid pairs and their morphisms form a category, where composition is given by $(\beta)(\alpha)=(\beta\alpha)$ and $\id^{(M,\>\mu)}=(\id^{M})$.

The next result explains the name `rigid complex'.  It is deduced from 
Theorem~\ref{thm:rigidity} by transposing a beautiful observation 
of Yekutieli and Zhang from the proof of \cite[4.4]{YZ1}:  A morphism 
of rigid pairs is a natural isomorphism from a functor in $M$ that is linear 
to one that is quadratic, so it must be given by an idempotent.

\begin{theorem}
\label{thm:uniquerigidity}
If\/ $C$ is a semidualizing complex and $(M,\mu)$ and $(N,\nu)$ are
$C$-rigid pairs in $\dcatb R$,  then the following conditions are  
equivalent.
 \begin{enumerate}[\quad\rm(i)]
\item
There is an equality $\Supp_R N=\Supp_R M$.
\item
There is an isomorphism $M\simeq N$ in $\dcat[R]$.
\item
There is  a unique isomorphism of\/ $C$-rigid pairs $(M,\mu)\simeq(N, 
\nu)$.
\end{enumerate}
 \end{theorem}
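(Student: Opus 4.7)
My plan is to reduce to the canonical $C$-rigid pairs $(C_a,\gamma_a)$ of Example~\ref{exa:rigidity} via Theorem~\ref{thm:rigidity}, and then to exploit the observation attributed to Yekutieli and Zhang: a morphism of rigid pairs intertwines a datum that is linear in the morphism with a datum that is quadratic in the morphism, which forces the morphism to be a unit idempotent, hence the identity (up to a uniquely determined rescaling when the rigidifier is not the canonical one).

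The equivalence (i)$\iff$(ii) is immediate from Theorem~\ref{thm:rigidity}: it produces idempotents $a,b\in R$ with $M\simeq C_a$, $N\simeq C_b$, and $\Supp_RM=\{\fp\mid\fp\not\ni a\}$, $\Supp_RN=\{\fp\mid\fp\not\ni b\}$; by the uniqueness clause in \ref{ch:idempotents}, equality of supports forces $a=b$ and hence $M\simeq N$. The implication (iii)$\implies$(ii) is tautological.

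The substantive content is (ii)$\implies$(iii). Here I reduce to the assertion that every $C$-rigid pair $(M,\mu)$ with $M\simeq C_a$ admits a \emph{unique} isomorphism of rigid pairs $(M,\mu)\simeq(C_a,\gamma_a)$; composing two such isomorphisms gives the unique morphism asserted in (iii). Transporting $\mu$ along some isomorphism $M\simeq C_a$, I may assume $M=C_a$. Since $C$ is semidualizing, localizing the homothety isomorphism $\chi^C$ at the idempotent $a$ yields $\Rhom R{C_a}{C_a}\simeq R_a$ in $\dcat[R]$, so $\End_{\dcat[R]}(C_a)=R_a$ and every automorphism of $C_a$ is scalar multiplication by some unit $w\in R_a^*$. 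Any rigidifier then has the form $\mu=u\gamma_a$ for a unique $u\in R_a^*$. Testing the candidate $\alpha=w\cdot\id_{C_a}$, and using that $\Rhom R{-}{-}$ is $R$-bilinear in both slots so that $\Rhom R{w\id}{w\id}=w^2\id$, the commutative square defining a morphism of rigid pairs $(C_a,u\gamma_a)\to(C_a,\gamma_a)$ collapses to the scalar equation $w\gamma_a=w^2u\gamma_a$. Its unique solution is $w=u^{-1}$, proving existence and uniqueness simultaneously; specializing to $u=1$ shows that the identity is the only self-morphism of $(C_a,\gamma_a)$ as a rigid pair.

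The main obstacle is the scalar bookkeeping in the reduction: one must verify that $\End_{\dcat[R]}(C_a)$ really coincides with $R_a$ (which uses the semidualizing hypothesis together with the idempotent-localization formalism of \ref{ch:idempotents}), and that the identity $\Rhom R{w\alpha}{w\beta}=w^2\Rhom R{\alpha}{\beta}$ propagates correctly through the construction of $\Rhom R{\Rhom R{\alpha}C}{\alpha}$. Once these are in hand, the entire statement collapses to the unique solvability of $wu=1$ in the group $R_a^*$.
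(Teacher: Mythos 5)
Your proposal is correct and follows essentially the same route as the paper's proof: reduce to the canonical pair $(C_a,\gamma_a)$ via Theorem~\ref{thm:rigidity}, use the semidualizing hypothesis to identify $\Hom_{\dcat[R]}(C_a,C_a)$ with $R_a$ so that rigidifiers and isomorphisms become units, and let the linear-versus-quadratic scalar equation $w=w^2u$ force $w=u^{-1}$. The only difference is organizational (you prove (i)$\iff$(ii) first and then (ii)$\implies$(iii), while the paper runs (i)$\implies$(iii)$\implies$(ii)$\implies$(i)), which does not affect the substance.
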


 \begin{proof}
(i)$\implies$(iii).
Let $\alpha\col C_a\xra {\simeq} M$ be an isomorphism in $\dcat[R]$
given by~\eqref{eq:rigidifier}, with $a$ the idempotent defined by 
formula \eqref{eq:locus}.   It suffices to prove that $(M,\mu)$ is uniquely 
isomorphic to the $C$-rigid pair $(C_a,\gamma_a)$ from 
Example~\eqref{exa:rigidity}.  Since it is equivalent to prove the same
in $\dcat[R_a]$, we may replace $R$ by~$R_a$ and drop all references
to localization at $\{1,a\}$.

Set $\wt\alpha=\Rhom R{\Rhom R{\alpha}C}{\alpha}$: this is an isomorphism,
and hence so is $\alpha^{-1}\circ\mu^{-1}\circ\wt\alpha\circ\gamma\col C\to C$.
As $C$ is semidualizing, there is an isomorphism
 \[
\HH0{\chi^C}\col R\xra{\,\cong\,}\HH0{\Rhom RCC}=\Hom{\dcat[R]}CC\,,
 \]
of rings, so $\alpha^{-1}\circ\mu^{-1}\circ\wt\alpha\circ\gamma=\HH0{\chi^C}(u)$
for some unit $u$ in $R$. The next computation shows that
$(u^{-1}\alpha)\col (C,\gamma)\to (M,\mu)$ is an isomorphism
of $C$-rigid pairs:
 \begin{align*}
\Rhom R{\Rhom R{u^{-1}\alpha}C}{u^{-1}\alpha}\circ\gamma
&=u^{-2}(\wt\alpha\circ\gamma)\\
&=u^{-2}\cdot u(\mu\circ\alpha)\\
&=\mu\circ(u^{-1}\alpha)\,.
 \end{align*}

Let $(\beta)\col (C,\gamma)\to (M,\mu)$ also be such an isomorphism.
The isomorphism $\HH0{\chi^C}$ implies that in $\dcat[R]$ one has
$\beta^{-1}\circ u^{-1}\alpha=v\id^{C}$ for some unit $v\in R$, whence
$v\id^{C}$ is a rigid endomorphism of the rigid pair $(C,\gamma)$. Thus
\begin{align*}
v\gamma
&=\gamma\circ(v\id^C)\\
&=\Rhom R{\Rhom R{v\id^C}C}{v\id^C}\circ\gamma\\
&=v^2\Rhom R{\Rhom R{\id^C}C}{\id^C}\circ\gamma\\
&=v^2\gamma\,.
 \end{align*}
As $v$ and $\gamma$ are invertible one gets $(v-1)\id^C=0$,
hence $v-1\in\ann_RC=0$.  This gives $v=1$, from where one
obtains $\beta^{-1}\circ u^{-1}\alpha=\id^{C}$, and finally $
(\beta)=(u^{-1}\alpha)$.

(iii)$\implies$(ii)$\implies$(i).  These implications are evident.
 \end{proof}

An alternative formulation of the preceding result is sometimes useful.

\begin{remark}
\label{rem:altrigidity}
Let $(M,\mu)$ be a $C$-rigid pair in $\dcatb R$, and $N$ a complex in 
$\dcatb R$.

For each isomorphism $\alpha\col N\xra{\simeq}M$ in $\dcat[R]$, set
\[
\rho(\alpha)=  (\Rhom R{\Rhom R{\alpha}C}{\alpha})^{-1}\circ \mu \circ \alpha\,;
\]
this is a morphism from $N$ to $\Rhom R{\Rhom R NC}N$.

Theorem~\ref{thm:uniquerigidity} shows that the assignment 
$\alpha\mapsto (N,\rho(\alpha))$ yields a bijection
\[
\{\text{isomorphisms from $N$ to $M$}\}\leftrightarrow
\{\text{rigid pairs $(N,\nu)$ isomorphic to $(M,\mu)$}\}
\]
\end{remark}

We finish with a converse, of sorts, to Example~\ref{exa:rigidity}.

\begin{proposition}
\label{rigid:sdc}
If $C$ in $\dcatb R$ is $C$-rigid, then there exist an idempotent
$a$ in~$R$, a semidualizing complex $B$ for $R_{a}$, and an
isomorphism $C\simeq B$ in $\dcat[R]$.
  \end{proposition}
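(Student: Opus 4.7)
The plan is to set $L=\Rhom R C C$ and exploit the $C$-rigidifying isomorphism $C\simeq\Rhom R L C$, via Bass series computations on localizations, to show that $L$ is concentrated in degree $0$ with $\HH{0}{L}$ a rank-one projective module on a clopen subset of $\Spec R$. This clopen subset will provide the desired idempotent $a$, yielding $B=C_a$, which I would then verify is semidualizing over $R_a$.

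First, for any $\fp\in\Supp_R C$, I would localize the rigidifying isomorphism to $C_\fp\simeq\Rhom{R_\fp}{L_\fp}{C_\fp}$ in $\dcat[R_\fp]$ and apply Lemma~\ref{lem:bass} to obtain
\[
I^{C_\fp}_{R_\fp}(t)=P^{R_\fp}_{L_\fp}(t)\cdot I^{C_\fp}_{R_\fp}(t).
\]
Since $C_\fp\ne 0$ forces $I^{C_\fp}_{R_\fp}(t)\ne 0$ by \ref{supp:bass}, one gets $P^{R_\fp}_{L_\fp}(t)=1$, and then $L_\fp\simeq R_\fp$ in $\dcat[R_\fp]$ by \ref{minimal}. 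For $\fp\notin\Supp_R C$ one has $L_\fp=0$ trivially.

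Next I would globalize: each $\HH{n}{L}=\Ext{-n}RCC$ is a finitely generated $R$-module, and the vanishings of the previous paragraph give $\HH{n}{L}=0$ for $n\ne 0$, so $L\simeq P$ in $\dcat[R]$ for $P=\HH{0}{L}$. The module $P$ is locally free of rank $0$ or $1$, hence finitely generated projective, and its rank function is locally constant on $\Spec R$. Therefore $\Supp_R C=\Supp_R P$ is clopen, and by \ref{ch:idempotents} there is an idempotent $a\in R$ with $\Supp_R C=D_a$ and $C\simeq C_a$ in $\dcat[R]$.

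Finally, I would show that $B=C_a$ is semidualizing for $R_a$ by verifying that $\chi^B\col R_a\to\Rhom{R_a}{B}{B}\simeq P_a$ is an isomorphism in $\dcat[R_a]$. Passing to degree-zero homology, this is the $R_a$-module map sending $1$ to the identity $e=[\id^B]$ of the $R_a$-algebra $\operatorname{End}_{\dcat[R_a]}(B)=P_a$. Localizing at $\fq\in\Spec R_a$ and writing $e_\fq=rb$ for a basis $b$ of $(P_a)_\fq$ together with $b^2=sb$, the relation $e_\fq\cdot b=b$ forces $rs=1$, so $e_\fq$ generates $(P_a)_\fq$; thus $\chi^B$ is a local and hence global isomorphism. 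The main obstacle is the first step, where Lemma~\ref{lem:bass} is applied although $L$ is not a priori bounded, a difficulty bypassed by the observation that $\Rhom R L C\simeq C$ already lies in $\dcatb R$; a subsidiary delicate point is that the derived endomorphism algebra $\operatorname{End}_{\dcat[R_\fq]}(B_\fq)$, a priori noncommutative, turns out to be a rank-one free $R_\fq$-algebra in which the multiplicative identity is automatically a module generator.
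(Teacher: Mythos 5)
Your argument is correct and follows the paper's proof in all essentials: the Bass-series computation showing that $\Rhom RCC$ is concentrated in degree zero and locally free of rank one on its clopen support is precisely the content of Theorem~\ref{thm:invertible} (applied with $M=C$ and $L=\Rhom RCC$), which the paper invokes directly rather than re-deriving, and your concluding step --- that the homothety from $R_a$ into a module that is simultaneously invertible and an $R_a$-algebra must be a local, hence global, isomorphism --- is the paper's final argument. The only difference is cosmetic: you inline the proof of Theorem~\ref{thm:invertible}, correctly handling the boundedness issue via Lemma~\ref{lem:bass} exactly as that theorem's proof does.
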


\begin{proof}
One has $C\simeq \Rhom R{\Rhom RCC}C$ by hypothesis.
Theorem~\ref{thm:invertible} and \ref{ch:idempotents} provide an 
idempotent $a\in R$, such that the $R_{a}$-module 
$\HH0{\Rhom RCC_a}$ is invertible and in 
$\dcat[R]$ there are natural isomorphisms $C\simeq C_a$ and
  \[
\HH0{\Rhom RCC_a}
\simeq\Rhom RCC_a
\simeq\Rhom {R_a}{C_a}{C_a}\,.
  \]
It follows that the homothety map
\[
\chi\col R_a\to \Hom{\dcat[R_a]}{C_a}{C_a}\cong
\HH0{\Rhom{R_a}{C_a}{C_a}}
\] 
turns $\Hom{\>\textsf D(R_a)}{C_a}{C_a}$ into \emph{both} an invertible 
$R_{a}$-module \emph{and} an $R_a$-algebra. Localizing at prime ideals 
of $R_a$, one sees  that such a $\chi$ must be an isomorphism; so the 
proposition holds with $B=C_a$.
 \end{proof}
 
\section{Relative dualizing complexes}
\label{Relative dualizing complexes}

In this section $K$ denotes a commutative noetherian ring, $S$ a commutative 
ring, and $\sigma\col K\to S$ a homomorphism of rings that is assumed to be 
\emph{essentially of finite type}:  This means that $\sigma$ can be factored as 
a composition
 \begin{equation}
   \label{eq:reduce1}
K\hra K[x_1,\dots,x_e]\to W^{-1}K[x_1,\dots,x_e]=Q\tra S
 \end{equation}
of homomorphisms of rings, where $x_1,\dots,x_e$ are indeterminates, $W$ is a 
multiplicatively closed set, the first two maps are canonical, the equality defines
$Q$, and the last arrow is surjective; the map $\sigma$ is \emph{of finite type} if 
one can choose $W=\{1\}$.  

As usual, $\Omega_{Q|K}$ stands for the $Q$-module of K\"ahler differentials;
for each $n\in\BZ$ we set $\Omega^n_{Q|K}={\ts\bwedge}^{n}_Q\Omega_{Q|K}$.  
Fixing the factorization \eqref{eq:reduce1}, we define a \emph{relative dualizing 
complex} for $\sigma$ by means of the following equality:
 \begin{equation}
   \label{eq:realative1}
D^{\sigma}=\Shift^e\Rhom QS{\Omega^e_{Q|K}}\,.
 \end{equation}

Our goal here is to determine when $D^\sigma$ is semidualizing, invertible, or 
dualizing.  It turns out that each one of these properties is equivalent to some property 
of the homomorphism $\sigma$, which has been studied earlier in a different context.  
We start by introducing notation and terminology that will be used throughout the section.

For every $\fq$ in $\Spec S$ we let $\fq\cap K$ denote the prime ideal $\sigma^{-1}(\fq)$
of $K$, and write $\sigma_{\fq}\col K_{\fq\cap K}\to S_{\fq}$ for the induced local 
homomorphism; it is essentially of finite type.

Recall that a ring homomorphism $\dot\sigma\col K\to P$ is said to be (\emph{essentially}) \emph{smooth} if it is (essentially) of finite type, flat, and for each ring homomorphism $K\to k$, where $k$ is a field, the ring $k \otimes_KP$ is regular; by \cite[17.5.1]{Gr} this notion of smoothness is equivalent to the one defined in terms of lifting of homomorphisms. When $\dot\sigma$ is essentially smooth $\Omega_{P|K}$ is finite projective over $P$; in case $\Omega_{P|K}$ has rank $d$, see \ref{ch:projrank}, we say that $\dot\sigma$ has \emph{relative dimension} $d$. The $P$-module $\Omega^d_{P|K}$ is then invertible.

An (\emph{essential}) \emph{smoothing} of $\sigma$ (of relative dimension $d$) is a 
decomposition
  \begin{equation}
 \label{eq:independence}
K\xra{\dot\sigma} P\xra{\sigma'} S
 \end{equation}
of $\sigma$ with $\dot\sigma$ (essentially) smooth of fixed relative dimension (equal
to $d$) and $\sigma'$ \emph{finite}, meaning that $S$ is a finite $P$-module via 
$\sigma'$; an essential smoothing of $\sigma$ always exists, see \eqref{eq:reduce1}.

\subsection{Basic properties}
  \label{ssec:rdc}
Fix an essential smoothing \eqref{eq:independence} of relative 
dimension $d$.

\begin{ssubchunk}
\label{ch:independence}
By \cite[1.1]{AILN}, there exists an isomorphism
\[
D^{\sigma}\simeq\Shift^d\Rhom PS{\Omega^d_{P|K}} 
\quad\text{in}\quad \dcat\,.
\]
 \end{ssubchunk}

\begin{ssubchunk}
\label{eq:basechange}
For each $M$ in $\dcatb S$ there are isomorphisms
\[
\begin{aligned}
\Rhom SM{D^\sigma}
&=\Rhom SM{\Shift^d\Rhom PS{\Omega^d_{P|K}}}\\
&\simeq\Shift^{d}\Rhom PM{\Omega^d_{P|K}}\\
&\simeq \Rhom PMP\otimes_P{\Shift^{d}\Omega^d_{P|K}}
   \end{aligned}
\]
in $\dcat$, because $\Omega^d_{P|K}$ is an invertible $P$-module.
 \end{ssubchunk}
 
\begin{ssubproposition}
\label{prop:naturality1}
If $U\subseteq K$ and $V\subseteq S$ are multiplicatively closed sets satisfying $\sigma(U)\subseteq V$, and $\wt\sigma\col U^{-1}K\to V^{-1}S$ is the induced map, then one has
 \[
D^{\wt\sigma}\simeq V^{-1}D^{\sigma}\quad\text{in}\quad\dcat[V^{-1}S]\,.
 \]
\end{ssubproposition}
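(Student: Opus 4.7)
The plan is to choose a convenient essential smoothing of $\sigma$, show it induces one for $\widetilde\sigma$, and then chase the definition through standard localization and base-change identities.

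Fix a factorization $\sigma=\pi\circ\iota$ as in \eqref{eq:reduce1}, with $\iota\colon K\to Q=W^{-1}K[x_1,\dots,x_e]$ essentially smooth of relative dimension $e$ and $\pi\colon Q\twoheadrightarrow S$ surjective. Set $\widetilde V=\pi^{-1}(V)$, which is a multiplicatively closed subset of $Q$; the hypothesis $\sigma(U)\subseteq V$ ensures $\iota(U)\subseteq\widetilde V$, so $\iota$ and $\pi$ descend to a commutative factorization
\[
U^{-1}K\xra{\widetilde\iota}\widetilde V^{-1}Q\xra{\widetilde\pi}V^{-1}S
\]
of $\widetilde\sigma$. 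Writing $\widetilde V^{-1}Q$ as a localization of $U^{-1}K[x_1,\dots,x_e]$ shows that $\widetilde\iota$ is essentially smooth of the same relative dimension $e$ and $\widetilde\pi$ is surjective, so by \ref{ch:independence} the complex $D^{\widetilde\sigma}$ may be computed from this essential smoothing.

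Next I would identify the K\"ahler differentials under the localization. Standard behavior of differentials under localization of both source and target gives a canonical isomorphism $\Omega_{\widetilde V^{-1}Q\mid U^{-1}K}\cong \widetilde V^{-1}Q\otimes_Q\Omega_{Q\mid K}$ of $\widetilde V^{-1}Q$-modules, and taking top exterior powers yields $\Omega^e_{\widetilde V^{-1}Q\mid U^{-1}K}\cong \widetilde V^{-1}Q\otimes_Q\Omega^e_{Q\mid K}$. Moreover, since $\pi$ is surjective, $\widetilde V^{-1}Q\otimes_Q S\cong V^{-1}S$.

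Finally I would invoke flat base change for $\Rhom$: since the localization map $Q\to \widetilde V^{-1}Q$ is flat and $S$ is a finite $Q$-module, there is a natural isomorphism
\[
\widetilde V^{-1}Q\dtensor Q \Rhom QS{\Omega^e_{Q\mid K}}\simeq
\Rhom{\widetilde V^{-1}Q}{V^{-1}S}{\widetilde V^{-1}Q\otimes_Q\Omega^e_{Q\mid K}}
\]
in $\dcat[\widetilde V^{-1}Q]$. The left side, viewed over $V^{-1}S$, is simply $V^{-1}\Rhom QS{\Omega^e_{Q\mid K}}$, while by the identification of differentials above the right side is $\Rhom{\widetilde V^{-1}Q}{V^{-1}S}{\Omega^e_{\widetilde V^{-1}Q\mid U^{-1}K}}$. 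Applying $\Shift^e$ and using \ref{ch:independence} on both sides then gives $V^{-1}D^\sigma\simeq D^{\widetilde\sigma}$.

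The only delicate point I expect is the flat base change isomorphism for $\Rhom$, which needs some care because $S$ need not be perfect over $Q$; however, since $\pi$ is finite and $\Omega^e_{Q\mid K}$ is invertible (so in particular $Q$-flat), the usual derived manipulations apply without issue. The rest is bookkeeping of multiplicative sets, made easier by the freedom \ref{ch:independence} affords in selecting the essential smoothing.
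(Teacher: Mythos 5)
Your proof is correct and follows essentially the same route as the paper's: both pull back $V$ to the smooth intermediate ring, pass to the induced essential smoothing of $\wt\sigma$, and conclude via \ref{ch:independence} together with the compatibility of differentials, exterior powers, and the functor $\Rhom QS{-}$ with localization. The only cosmetic difference is that the paper argues from an arbitrary essential smoothing $K\to P\to S$ (setting $V'=\sigma'^{-1}(V)$) rather than from the surjective factorization \eqref{eq:reduce1}, which changes nothing of substance.
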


\begin{proof}
Set $V'{}=\sigma'^{-1}(V)$.  In the induced factorization 
$U^{-1}K\to (V'){}^{-1}P\to V^{-1}S$ of $\wt\sigma$ the first map is essentially 
smooth of relative dimension $d$ and the second one is  finite.  The first 
and the last isomorphisms in the next chain hold by \ref{ch:independence}, the rest because 
localization commutes with modules of differentials and exterior powers:
\begin{align*}
D^{\wt\sigma}
&\simeq \Shift^{d}\Rhom {(V'){}^{-1}P}{(V'){}^{-1}S}{\Omega_{(V'){}^ 
{-1}P|U^{-1}K}^{d}}\\
&\simeq \Shift^{d}\Rhom {(V'){}^{-1}P}{(V'){}^{-1}S}{(V'){}^{-1} 
\Omega_{P|K}^{d}}\\
&\simeq (V'){}^{-1}\Shift^{d}\Rhom PS{\Omega_{P|K}^{d}}\\
&\simeq V^{-1}D^{\sigma}\,.
 \qedhere
\end{align*}
  \end{proof}

\begin{ssubproposition}
\label{prop:naturality2}
If $\vf\col S\to T$ is a finite homomorphism of rings, then for 
the map $\tau=\vf\sigma\col K\to T$ there is an isomorphism
\[
D^{\tau}\simeq\Rhom ST{D^\sigma}\quad\text{in}\quad\dcat[T]\,.
\]
\end{ssubproposition}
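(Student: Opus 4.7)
The strategy is to use an essential smoothing of $\sigma$ to construct one for $\tau$, and then to reduce the claim to a standard base-change identity for $\R\Hom$.

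First I would fix an essential smoothing $K \xra{\dot\sigma} P \xra{\sigma'} S$ of $\sigma$ of some relative dimension $d$, as guaranteed by \eqref{eq:independence}. Since $\vf\col S\to T$ is finite and $\sigma'\col P\to S$ is finite, the composite $\vf\sigma'\col P\to T$ is finite; hence
\[
K \xra{\dot\sigma} P \xra{\vf\sigma'} T
\]
is an essential smoothing of $\tau=\vf\sigma$, again of relative dimension $d$. Invoking \ref{ch:independence} for both smoothings simultaneously gives isomorphisms
\[
D^{\sigma}\simeq\Shift^{d}\Rhom PS{\Omega^{d}_{P|K}}
\quad\text{and}\quad
D^{\tau}\simeq\Shift^{d}\Rhom PT{\Omega^{d}_{P|K}}
\]
in $\dcat[S]$ and $\dcat[T]$ respectively.

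Next I would identify $\Rhom ST{D^{\sigma}}$ with $D^{\tau}$ by means of the standard adjunction/composition formula
\[
\Rhom ST{\Rhom PSX}\simeq\Rhom PTX\,,
\]
valid in $\dcat[T]$ for any $X\in\dcat[P]$, where $T$ is regarded as a $P$-module through $\vf\sigma'$. Applied with $X=\Omega^{d}_{P|K}$, together with the fact that $\Shift^{d}$ commutes with $\Rhom S T{-}$, this yields
\[
\Rhom S T{D^{\sigma}}\simeq
\Shift^{d}\Rhom ST{\Rhom PS{\Omega^{d}_{P|K}}}
\simeq \Shift^{d}\Rhom PT{\Omega^{d}_{P|K}}\simeq D^{\tau}\,,
\]
as required.

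There is really no serious obstacle: the only point worth being careful about is that the isomorphism provided by \ref{ch:independence} is natural enough (and independent enough of the choice of smoothing) that the same module $\Omega^{d}_{P|K}$ and the same $P$ can be used on both sides, so that the adjunction identifies the two descriptions. Once a common essential smoothing has been chosen, as above, naturality of the adjunction $\Rhom ST{\Rhom PS{-}}\simeq\Rhom PT{-}$ in its argument guarantees the resulting isomorphism is a morphism in $\dcat[T]$, completing the proof.
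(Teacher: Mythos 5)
Your proof is correct and follows essentially the same route as the paper: choose an essential smoothing $K\to P\to S$ of $\sigma$, observe that composing the finite map $\sigma'$ with the finite map $\vf$ gives an essential smoothing $K\to P\to T$ of $\tau$ of the same relative dimension, then apply \ref{ch:independence} to both and conclude by the adjunction $\Rhom ST{\Rhom PS{-}}\simeq\Rhom PT{-}$. Nothing further is needed.
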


\begin{proof}
The result comes from the following chain of isomorphisms:
\begin{align*}
D^{\tau}
 & \simeq  \Shift^d\Rhom PT{\Omega_{P|K}^{d}}\\
  &\simeq  \Rhom ST{\Shift^d\Rhom PS{\Omega_{P|K}^{d}}}\\
  &=  \Rhom ST{D^\sigma}\,,
\end{align*}
where the first one is obtained from the factorization 
$K\xra{\kappa}P\xra{\vf\sigma'}T$ of $\tau$ and the second
one by adjunction.
 \end{proof}

\subsection{Derived $D^\sigma$-reflexivity}
\label{derived reflexivity}

A standard calculation shows that derived $D^\sigma$-reflexivity can be read 
off any essential smoothing, see \eqref{eq:independence}:

\begin{ssubproposition}
   \label{prop:relativeG-dim}
A complex $M$ in $\dcat$ is derived $D^\sigma$-reflexive if and only
if $M$ is derived $P$-reflexive when viewed as a complex in $\dcat[P]$.
  \end{ssubproposition}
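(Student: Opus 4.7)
The plan is to replace $D^{\sigma}$ by a shift of the invertible $P$-module $\Omega^d_{P|K}$ via the Hom-adjunction built into its very definition, and then invoke Lemma~\ref{refl:operations}. Fix an essential smoothing \eqref{eq:independence} of relative dimension $d$, set $L=\Shift^d\Omega^d_{P|K}$, and recall from \ref{ch:independence} that $D^\sigma\simeq\Rhom P S L$ in $\dcat$. Since $\dot\sigma$ is essentially smooth of relative dimension $d$, the $P$-module $\Omega^d_{P|K}$ is invertible, so by Proposition~\ref{prop:semid}(i$'$) the complex $L$ is invertible in $\dcat[P]$.

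The first step is to translate the relevant Hom complexes from $\dcat$ to $\dcat[P]$. For $M\in\dcat$ the Hom--tensor adjunction along the finite map $\sigma'\colon P\to S$ gives a natural isomorphism
\[
\Rhom S M{D^\sigma}=\Rhom S M{\Rhom P S L}\simeq\Rhom P M L
\]
(a refinement of the computation in \ref{eq:basechange} that avoids requiring $M\in\dcatb S$ at the outset), and iterating it yields
\[
\Rhom S{\Rhom S M{D^\sigma}}{D^\sigma}\simeq\Rhom P{\Rhom P M L}{L}\,.
\]
A direct chain-level calculation from the defining formula $m\mapsto(\alpha\mapsto(-1)^{|m||\alpha|}\alpha(m))$ shows that under these identifications the biduality $\delta^{D^\sigma}_M$ is carried to $\delta^{L}_M$. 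This compatibility is the step I expect to require the most care, though it is essentially the same adjunction check implicit in the proof of Lemma~\ref{refl:operations}.

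The second step handles boundedness. Because $\sigma'$ is finite, an $S$-module is finitely generated over $S$ if and only if it is finitely generated over $P$; hence a complex in $\dcat$ belongs to $\dcatb S$ if and only if it belongs to $\dcatb P$ when viewed over $P$. Applied to $M$ and to $\Rhom S M{D^\sigma}\simeq\Rhom P M L$, this shows that $M$ is derived $D^\sigma$-reflexive over $S$ if and only if $M$ and $\Rhom P M L$ lie in $\dcatb P$ and $\delta^L_M$ is an isomorphism in $\dcat[P]$; that is, if and only if $M$ is derived $L$-reflexive as a $P$-complex.

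The third step is an immediate appeal to Lemma~\ref{refl:operations} applied over $P$ with the invertible complex $L$ and with $C=P$: derived $L$-reflexivity of $M$ is equivalent to derived $P$-reflexivity of $M$. Chaining the three steps yields the desired equivalence, and the independence of the conclusion from the chosen smoothing is automatic since derived $P$-reflexivity does not refer to the factorization. The only delicate ingredient, as noted, is the identification of biduality morphisms under the adjunction; everything else is a direct assembly of tools already developed in the paper.
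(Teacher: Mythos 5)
Your proposal is correct and follows essentially the same route as the paper's proof: both identify $\Rhom SM{D^\sigma}$ with $\Rhom PM{\Shift^d\Omega^d_{P|K}}$ via the adjunction for the finite map $\sigma'$ (the paper realizes this at the chain level with a semiinjective resolution of $\Shift^d\Omega^d_{P|K}$, which is exactly the care you flag as needed for matching the biduality morphisms), handle boundedness using finiteness of $\sigma'$, and then invoke Lemma~\ref{refl:operations} with the invertible $P$-complex $\Shift^d\Omega^d_{P|K}$.
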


 \begin{proof}
Evidently, $M$ is in $\dcatb S$ if and only if it is in $\dcatb P$.
{From} \ref{eq:basechange} one sees that $\Rhom SM{D^\sigma}$ is in $\dcatb S$ if 
and only if $\Rhom PMP$ is in $\dcatb P$. 

Set $\Omega=\Shift^d\Omega^d_{P|K}$, where $d$ is the relative dimension 
of  $K\to P$, and let $\Omega\to I$ be a semiinjective resolution in $\dcat[P]$.
Thus, $D^{\sigma}$ is isomorphic to $\Hom PSI$ in $\dcat$.  The biduality 
morphism $\delta^{\Omega}_{M}$ in $\dcat[P]$ is realized by a morphism
  \[
M\to \Hom P{\Hom PMI}I
  \]
of  complexes of $S$-modules;  see \eqref{eq:biduality}. 
Its composition with the natural isomorphism of complexes of $S$-modules
\[
\Hom P{\Hom PMI}I \cong \Hom S{\Hom SM{\Hom PSI}}{\Hom PSI}
\]
represents the morphism $\delta^{D^{\sigma}}_{M}$ in $\dcat$.  It follows
that $M$ is derived $D^\sigma$-reflexive if and only if it is derived 
$\Omega$-reflexive. Since $\Omega$ is an invertible $P$-module, the last 
condition is equivalent---by Lemma~\ref{refl:operations}---to the derived 
$P$-reflexivity of $M$.
 \end{proof}

A complex $M$ in $\dcatP S$ is said to have \emph{finite flat dimension}
over $K$ if $M$ is isomorphic in $\dcat[K]$ to a bounded complex of flat
$K$-modules; we then write $\fd_KM<\infty$.

When $\fd_KS$ is finite we say that $\sigma$ is of \emph {finite flat
dimension} and write $\fd\sigma<\infty$.

 \begin{ssubchunk}
   \label{fpd:smooth}
A complex $M$ in $\dcatb S$ satisfies $\fd_{K}M<\infty$ if and only if it is perfect in $\dcat[P]$ for some (equivalently, any) factorization \eqref{eq:independence} of $\sigma$; see \cite[beginning of \S6]{AILN}.
 \end{ssubchunk}

\begin{ssubcorollary}
   \label{cor:relativeP-dim}
A complex $M$ in $\dcatb S$ with $\fd_KM<\infty$ is derived
$D^\sigma$-reflexive.
  \end{ssubcorollary}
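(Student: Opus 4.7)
The plan is to reduce to the known fact that perfect complexes are derived reflexive, transported across an essential smoothing via Proposition~\ref{prop:relativeG-dim}.

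First, I would pick any essential smoothing $K\xra{\dot\sigma}P\xra{\sigma'}S$ of $\sigma$, as in \eqref{eq:independence}, and regard $M$ as a complex in $\dcat[P]$ via $\sigma'$. Since $M$ is in $\dcatb S$, it is certainly in $\dcatb P$. The assumption $\fd_KM<\infty$ together with \ref{fpd:smooth} gives that $M$ is perfect in $\dcat[P]$.

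Next, I would invoke the fact, already noted in the proof of Theorem~\ref{thm:perfect} (implication (ii)$\Longrightarrow$(i) with $C=R$), that every perfect complex over a commutative noetherian ring is derived reflexive with respect to the ring itself: indeed, if $M\simeq F$ with $F$ a bounded complex of finite projective $P$-modules, then $\Rhom PMP\simeq\Rhom PFP$ is again a bounded complex of finite projective $P$-modules, and the biduality map $\delta^P_M$ is an isomorphism. Thus $M$ is derived $P$-reflexive as a complex in $\dcat[P]$.

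Finally, I would apply Proposition~\ref{prop:relativeG-dim}, which asserts that derived $D^\sigma$-reflexivity of $M$ in $\dcat[S]$ is equivalent to derived $P$-reflexivity of $M$ viewed in $\dcat[P]$. This immediately yields the conclusion. There is no real obstacle here; the entire content of the corollary is already packaged in \ref{fpd:smooth} and Proposition~\ref{prop:relativeG-dim}, so the proof is essentially a one-line combination of these two inputs together with the elementary observation that perfect implies derived self-reflexive.
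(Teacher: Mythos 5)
Your proposal is correct and follows exactly the paper's own argument: apply \ref{fpd:smooth} to see that $M$ is perfect over $P$, note that perfect complexes are derived $P$-reflexive, and conclude via Proposition~\ref{prop:relativeG-dim}. The only difference is that you spell out the (elementary) reflexivity of perfect complexes, which the paper dismisses as obvious.
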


 \begin{proof}
By \ref{fpd:smooth} the complex $M$ is perfect in $\dcat[P]$.  It is then
obviously derived $P$-reflexive, and so is derived $D^\sigma$-reflexive
by the previous  proposition.
 \end{proof}

\subsection{Gorenstein base rings}
\label{Gorenstein base rings}
Relative dualizing complexes and their absolute counterparts, see
\ref{Dualizing complexes}, are compared in the next result, where the
`if' part is classical.

  \begin{ssubtheorem}
 \label{thm:gorring}
The complex $D^{\sigma}$ is strongly pointwise dualizing for $S$ if and only if
the ring $K_{\fq\cap K}$ is Gorenstein for every prime ideal $\fq$ of $S$.
  \end{ssubtheorem}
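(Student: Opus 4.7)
The plan is to reduce the theorem to a local statement at each $\fq\in\Spec S$ and then apply the machinery developed for Gorenstein local rings. By Proposition~\ref{prop:naturality1}, one has $D^\sigma_\fq\simeq D^{\sigma_\fq}$ in $\dcat[S_\fq]$, so the pointwise part of ``strongly pointwise dualizing'' amounts to requiring that $D^{\sigma_\fq}$ be dualizing for the local ring $S_\fq$ for every $\fq$. When $K_{\fq\cap K}$ is Gorenstein it has finite Krull dimension (Bass), and then so does the essentially-of-finite-type local ring $S_\fq$; in that setting~\ref{dualizing_classical} lets one move freely between the pointwise and absolute notions of dualizing.

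For the local equivalence, fix an essential smoothing $K_{\fq\cap K}\xra{\dot\sigma}P\xra{\sigma'}S_\fq$ with $P$ localized at a prime over $\fq$; then $P$ is local and $\Omega^d_{P|K_{\fq\cap K}}$ is free of rank one, so $D^{\sigma_\fq}\simeq\Shift^d\Rhom P{S_\fq}P$. For the ``if'' direction, $\dot\sigma$ is flat with regular fibers, hence $K_{\fq\cap K}$ Gorenstein implies $P$ Gorenstein and therefore pointwise dualizing for itself; the invertible complex $\Shift^dP$ is then pointwise dualizing by Lemma~\ref{refl:operations2}, and Corollary~\ref{cor:kawasaki} applied to the finite map $\sigma'$ shows $D^{\sigma_\fq}$ is pointwise dualizing for $S_\fq$, hence dualizing. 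For the ``only if'' direction, $D^{\sigma_\fq}$ dualizing has finite injective dimension over $S_\fq$; setting $k=S_\fq/\fq S_\fq$ one obtains $\Rhom{S_\fq}k{D^{\sigma_\fq}}\in\dcatb{S_\fq}$, and tensor-hom adjunction along $\sigma'$ identifies this with $\Shift^d\Rhom PkP$, so $\Ext nPkP=0$ for $n\gg0$. Because $\sigma'$ is finite local, $k$ is a nonzero finite-dimensional vector space over $P/\fm_P$, i.e., $k\cong(P/\fm_P)^r$ as $P$-modules with $r\ge1$; this forces $\Ext nP{P/\fm_P}P=0$ for $n\gg0$, and Proposition~\ref{prop:gabber} applied with $N=P$ yields that $P$ is Gorenstein. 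Since Gorensteinness descends along a flat local map with regular fibers, $K_{\fq\cap K}$ is Gorenstein, as required.

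The main obstacle is the global boundedness $D^\sigma\in\dcatb S$ needed in the ``if'' direction---a subtle point when $K$ (and hence $P$) is allowed to have infinite Krull dimension, so one cannot uniformly bound $\id_P\Omega^d_{P|K}$. The plan is to fix one global essential smoothing $K\xra{\dot\sigma}P\xra{\sigma'}S$ and to prove that $S$ is derived $P$-reflexive in $\dcat[P]$ by verifying condition~(iii) of Theorem~\ref{thm:Gperfect}: for each $\fm\in\Max P$, either $S_\fm=0$ (automatically reflexive) or $\fm\supseteq\ann_PS$, in which case going-up along the finite map $\sigma'$ yields $\fq\in\Spec S$ with $\sigma'^{-1}(\fq)=\fm$; then $\fm\cap K=\fq\cap K$ is Gorenstein by hypothesis, $P_\fm$ is Gorenstein, and every complex in $\dcatb{P_\fm}$---in particular $S_\fm$---is derived $P_\fm$-reflexive. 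Theorem~\ref{thm:Gperfect} then gives $S$ derived $P$-reflexive globally, whence $\Rhom PSP\in\dcatb P$; tensoring with the invertible $P$-module $\Omega^d_{P|K}$, one concludes $D^\sigma\simeq\Shift^d\bigl(\Rhom PSP\dtensor P\Omega^d_{P|K}\bigr)\in\dcatb P\subseteq\dcatb S$, completing the proof.
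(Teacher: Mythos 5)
Your argument is correct and, in its local core, follows the same route as the paper's proof: localize via Proposition~\ref{prop:naturality1}, pass between $K_{\fq\cap K}$ and the essentially smooth local algebra over it using the flat-local-homomorphism-with-regular-fibre criterion for the Gorenstein property, get the ``if'' direction from Corollaries~\ref{cor:goto_derived} and \ref{cor:kawasaki}, and get the ``only if'' direction from vanishing of $\Ext nP{P/{\fm}P}P$ for $n\gg0$. Two differences are worth noting. In the ``only if'' direction the paper argues through derived $D^{\sigma}$-reflexivity of the residue field (Theorem~\ref{thm:gabber} followed by Proposition~\ref{prop:relativeG-dim}), whereas you extract the $\Ext$-vanishing directly from finite injective dimension plus adjunction along the finite map; the content is the same. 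More substantively, you are right that the boundedness $D^\sigma\in\dcatb S$ required by ``strongly pointwise dualizing'' does not follow from boundedness of each localization (no uniform bound), and your verification of it---checking condition~(iii) of Theorem~\ref{thm:Gperfect} for $S$ over $P$ at each maximal ideal of $P$, so that the uniform bound supplied by Theorem~\ref{thm:rfd} does the work---is a genuine addition: the paper's own proof leaves this point implicit (it amounts to Proposition~\ref{prop:relative-semidualizing} combined with Corollary~\ref{cor:goto_original}, neither of which depends on the present theorem, so there is no circularity on either route). Two small cautions. First, Lemma~\ref{refl:operations2} concerns semidualizing complexes, not dualizing ones; what you actually use is only that a shift of a dualizing complex is dualizing, which is immediate. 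Second, when localizing at a prime over $\fq$ you should work with the presentation \eqref{eq:reduce1} whose last map is surjective: localizing an arbitrary finite $\sigma'\col P\to S$ at $\fq\cap P$ exhibits $S_\fq$ only as a localization of a finite $P_{\fq\cap P}$-module, which need not itself be finite over $P_{\fq\cap P}$.
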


  \begin{proof}
Factor $\sigma$ as in \eqref{eq:reduce1} and set $\fp=\fq\cap K$.  The homomorphism $\sigma_{\fq}\col K_{\fp}\to S_\fq$ satisfies $(D^{\sigma})_{\fq}\cong D^{\sigma_{\!\fq}}$ by Proposition~\ref{prop:naturality1}.  Localizing, we may assume that $\sigma$ is a local homomorphism $(K,\fp)\to (S,\fq)$, and that the ring
$Q$ is local. As the ring $Q/\fp Q$ is regular, $K$ is Gorenstein if and only so is $Q$; see \cite[23.4]{Ma}. Thus, replacing $Q$ with $K$ we may further assume that $\sigma$ is surjective.

If $K$ is Gorenstein, then $D^{\sigma}=\Rhom KSK$ holds so it is dualizing for $S$ by Corollaries~\ref{cor:goto_derived} and \ref{cor:kawasaki}.

When $D^{\sigma}$ is dualizing for $S$, the residue field $k=S/\fq$ is derived $D^{\sigma}$-reflexive, see Theorem \ref{thm:gabber}. By Proposition~\ref{prop:relativeG-dim} it is also derived $K$-reflexive, which implies $\Ext nKkK=0$ for $n\gg0$. Thus, $K$ is Gorenstein; see \cite[18.1]{Ma}.
 \end{proof}

\subsection{Homomorphisms of finite G-dimension}
 \label{Finite G-dimension}

When the $P$-module $S$ has finite G-dimension, see
\ref{Gorenstein dimension}, we say that $\sigma$ has \emph{finite
$G$-dimension} and write $\gdim\sigma<\infty$. By the following result,
this notion is independent of the choice of factorization.

\begin{ssubproposition}
   \label{prop:relative-semidualizing}
The following conditions are  equivalent.
 \begin{enumerate}[\rm\quad(i)]
\item
$D^{\sigma}$ is semi-dualizing for $S$.
\item 
$\sigma$ has finite G-dimension.
\item
$\sigma_\fn$ has finite G-dimension for each $\fn\in\Max S$.
\end{enumerate}
  \end{ssubproposition}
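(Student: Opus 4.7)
The plan is to reduce everything, via a fixed essential smoothing \eqref{eq:independence} $K\xra{\dot\sigma} P\xra{\sigma'} S$ of relative dimension $d$, to statements about $S$ viewed as a complex over $P$, and then invoke the results of the preceding sections.

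For (i)$\iff$(ii), I would unpack (i) using the equivalence (i)$\iff$(i$'$) of Proposition~\ref{prop:semidualizing}: $D^\sigma$ is semidualizing if and only if $S$ itself is derived $D^\sigma$-reflexive over $S$. By Proposition~\ref{prop:relativeG-dim}, this is equivalent to $S$ being derived $P$-reflexive as a $P$-complex, which by~\ref{gd=reflexive} is precisely the statement that $S$ has finite $G$-dimension over $P$, i.e., (ii). A welcome byproduct is that, since (i) makes no reference to the factorization, the equivalence shows that finite $G$-dimension of $\sigma$ is independent of the choice of essential smoothing, which in turn makes condition (iii) well-defined.

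For (ii)$\iff$(iii), I would localize \eqref{eq:independence} to obtain, for each $\fn\in\Max S$, a canonical factorization $K_{\fn\cap K}\to P_{(\sigma')^{-1}(\fn)}\to S_\fn$ of $\sigma_\fn$ with essentially smooth first arrow and finite second arrow; condition (iii) then unpacks to: $S_\fn$ has finite $G$-dimension over $P_{(\sigma')^{-1}(\fn)}$ for every $\fn\in\Max S$. The tool is Corollary~\ref{cor:localGdim}, applied to the $P$-module $S$, which tests finite $G$-dimension at $\fm\in\Max P$. To bridge the two formulations I would use the standard fact that the finite map $\sigma'$ induces a map $\Max S\to\Max P$ whose image is exactly the set of maximal ideals of $P$ containing $\ker(\sigma')$. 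For $\fm\in\Max P$ not containing $\ker(\sigma')$, an element of $\ker(\sigma')$ is inverted in $S_\fm$, forcing $S_\fm=0$ which trivially has finite $G$-dimension; for the remaining $\fm$, the ring $S_\fm$ is semilocal and finite over the local ring $P_\fm$, so it decomposes as $\prod_{\fn\cap P=\fm} S_\fn$, and finite $G$-dimension over $P_\fm$ passes between the product and its factors.

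The step requiring the most care is the local-to-global bookkeeping in (ii)$\iff$(iii): matching $\Max S$ with the fibers of $\sigma'$ over $\Max P$, disposing of the maximal ideals of $P$ outside the image of $\Spec S$, and verifying that finite $G$-dimension distributes across the semilocal decomposition of $S_\fm$. Everything else is a formal concatenation of equivalences already assembled in Sections~\ref{Derived reflexivity}--\ref{Duality}.
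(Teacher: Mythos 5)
Your proof of (i)$\iff$(ii) is exactly the paper's: Proposition~\ref{prop:semidualizing}(i)$\iff$(i$'$), then Proposition~\ref{prop:relativeG-dim} applied to $M=S$, then \ref{gd=reflexive}. That part is correct, as is the remark that it shows finiteness of $G$-dimension to be independent of the factorization.

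The step (ii)$\iff$(iii) is where your argument has genuine gaps. First, the second arrow of your ``canonical factorization'' $K_{\fn\cap K}\to P_{\fn\cap P}\to S_\fn$ is \emph{not} finite in general, so this is not an essential smoothing of $\sigma_\fn$ and condition (iii) does not unpack to finiteness of the $G$-dimension of $S_\fn$ over $P_{\fn\cap P}$. Indeed $S_\fn$ need not even be a finite $P_{\fn\cap P}$-module, so it need not lie in $\dcatb{P_{\fn\cap P}}$ and derived $P_{\fn\cap P}$-reflexivity is not even defined for it. Second, and fatally for the bookkeeping, the decomposition $S_\fm\cong\prod_{\fn\cap P=\fm}S_\fn$ is false for a general finite algebra over a local ring: it holds over henselian or complete local rings, not in general. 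Take $\sigma\colon\BZ\to\BZ[i]$ (so $K=P=\BZ$, $\sigma'$ finite) and $\fm=(5)$. Then $S_\fm=\BZ[i]\otimes_{\BZ}\BZ_{(5)}$ is a \emph{domain} with the two maximal ideals $(2+i)$ and $(2-i)$, hence not a product; moreover $\BZ[i]_{(2+i)}$ contains $1/(2-i)=(2+i)/5$ and is not finite, nor even integral, over $\BZ_{(5)}$. So the bridge your argument needs between $\Max S$ and $\Max P$ is not there, and Corollary~\ref{cor:localGdim} over $P$ cannot see the individual maximal ideals of $S$ lying over a given $\fm\in\Max P$.

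The paper's route for (ii)$\iff$(iii) performs the local-to-global passage over $S$ rather than over $P$: Proposition~\ref{prop:naturality1} gives $D^{\sigma_\fn}\simeq(D^\sigma)_\fn$; the already-established equivalence (i)$\iff$(ii), applied to each $\sigma_\fn$, converts condition (iii) into the statement that $(D^\sigma)_\fn$ is semidualizing for $S_\fn$ for every $\fn\in\Max S$; and Proposition~\ref{prop:semidualizing} says this holds if and only if $D^\sigma$ is semidualizing for $S$, which is (i), hence (ii). If you want to keep a hands-on localization argument, you must at least re-factor each $\sigma_\fn$ through a new essentially smooth algebra absorbing the extra localization; the semidualizing-complex formalism is precisely what makes that unnecessary.
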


 \begin{proof}
(i)$\iff$(ii).  By Proposition~\ref{prop:semidualizing},
$D^{\sigma}$ is semi-dualizing for $S$ if and only if $S$ is derived
$D^{\sigma}$-reflexive.  By Proposition~\ref{prop:relativeG-dim} this is
equivalent to $S$ being derived $P$-reflexive in $\dcat[P]$, and hence,
by \ref{gd=reflexive}, to $S$ having finite G-dimension over $P$.

(ii)$\iff$(iii).  Proposition~\ref{prop:naturality1} yields an
isomorphism $D^{\sigma_{\fn}}\simeq (D^{\sigma})_{\fn}$ for each
$\fn$. Given (i)$\iff$(ii), the desired equivalence follows from
Proposition~\ref{prop:semidualizing}.  \end{proof}

Combining the proposition with Theorem~\ref{thm:gorring} and Corollary 
\ref{cor:relativeP-dim}, one obtains:

 \begin{ssubcorollary}
   \label{cor:relative-semidualizing}
Each condition below implies that $\sigma$ has finite G-dimension:
 \begin{enumerate}[\quad\rm(a)]
\item
The ring $K_{\fn\cap K}$ is Gorenstein for every $\fn\in\Max S$.
\item
The homomorphism $\sigma$ has finite flat dimension.\qed
 \end{enumerate}
\end{ssubcorollary}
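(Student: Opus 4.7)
The plan is to reduce both parts to Proposition \ref{prop:relative-semidualizing}, which tells us that $\sigma$ has finite G-dimension precisely when $D^\sigma$ is semidualizing (equivalently, when $\sigma_\fn$ has finite G-dimension for all $\fn\in\Max S$). So for each of the two conditions, I just need to verify that $D^\sigma$ is semidualizing for $S$.

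For part (a), I would first observe that the hypothesis propagates from maximal ideals to all primes: given any $\fq\in\Spec S$, choose a maximal ideal $\fn$ of $S$ containing $\fq$; then $\fq\cap K\subseteq\fn\cap K$, so $K_{\fq\cap K}$ is a localization of the Gorenstein ring $K_{\fn\cap K}$, and is therefore itself Gorenstein. Having secured the hypothesis for every prime of $S$, I can apply Theorem \ref{thm:gorring} to conclude that $D^\sigma$ is strongly pointwise dualizing for $S$. In particular, $D^\sigma$ lies in $\dcatb S$ and $(D^\sigma)_\fm$ is dualizing, and hence semidualizing, for $R_\fm$ for every $\fm\in\Max S$; by Proposition \ref{prop:semidualizing}, $D^\sigma$ itself is semidualizing. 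Proposition \ref{prop:relative-semidualizing} then gives $\gdim\sigma<\infty$.

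For part (b), the argument is a direct application of Corollary \ref{cor:relativeP-dim}. Taking $M=S$, the hypothesis $\fd_K S<\infty$ is precisely $\fd\sigma<\infty$, so $S$ is derived $D^\sigma$-reflexive over $S$. By the equivalence (i)$\iff$(i$'$) in Proposition \ref{prop:semidualizing}, this means $D^\sigma$ is semidualizing for $S$, and another appeal to Proposition \ref{prop:relative-semidualizing} finishes the job.

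There is no real obstacle here; both implications are essentially bookkeeping once the right structural results are in place. The only point requiring any care is the localization step in (a)---making sure that the hypothesis stated only at maximal ideals of $S$ does reach the primes of $S$ that Theorem \ref{thm:gorring} demands---and this is immediate from the standard fact that localizations of Gorenstein rings are Gorenstein.
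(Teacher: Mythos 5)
Your argument is correct and uses exactly the ingredients the paper combines for this corollary: Proposition~\ref{prop:relative-semidualizing} together with Theorem~\ref{thm:gorring} for (a) and Corollary~\ref{cor:relativeP-dim} for (b), and your localization step (passing from maximal ideals of $S$ to all primes, since a localization of a Gorenstein ring is Gorenstein) is precisely the bookkeeping the paper leaves implicit. The only blemish is a typo: in part (a) you mean that $(D^\sigma)_\fm$ is semidualizing for $S_\fm$, not for $R_\fm$.
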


\begin{ssubnotes}
 \label{notes:Gdimhom}
A notion of finite G-dimension that applies to arbitrary
local homomorphisms is defined in \cite{AF:qG}.  Proposition
\ref{prop:relative-semidualizing} and \cite[4.3, 4.5]{AF:qG}
show that the definitions agree when both apply; thus, Corollary
\ref{cor:relative-semidualizing} recovers \cite[4.4.1, 4.4.2]{AF:qG}.
 \end{ssubnotes}

\subsection{Relative rigidity}
\label{Relative rigidity}
Proposition \ref{prop:relative-semidualizing} and Theorem \ref{thm:rigidity} yield:

\begin{ssubtheorem}
\label{thm:relativerigidity}
Assume that $\sigma$ has finite G-dimension.

A complex $M$ in $\dcatb S$ is $D^\sigma$-rigid if and only if it is  
isomorphic to $D^{\sigma}_a$ for some idempotent $a\in S$; such an  
idempotent is uniquely defined.
   \qed
 \end{ssubtheorem}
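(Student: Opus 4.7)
The plan is essentially to bootstrap this result from Theorem \ref{thm:rigidity} by showing that the hypothesis of finite G-dimension is exactly what is needed to put us in the semidualizing setting for the ring $S$.

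First, I would invoke Proposition \ref{prop:relative-semidualizing}: the assumption $\gdim\sigma<\infty$ is equivalent to $D^{\sigma}$ being a semidualizing complex for $S$. This is the only real content that is specific to the relative-dualizing setup; it is what lets us apply the general rigidity machinery of Section \ref{Rigidity}.

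Next, I would apply Theorem \ref{thm:rigidity} directly to the pair $(S,D^\sigma)$ in place of $(R,C)$. That theorem asserts, under the semidualizing hypothesis just established, that a complex $M\in\dcatb S$ is $D^\sigma$-rigid if and only if there is an isomorphism $M\simeq D^\sigma_a$ in $\dcat[S]$ for some idempotent $a\in S$. This gives the equivalence claimed in the statement.

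Finally, for uniqueness of the idempotent, I would read off formula \eqref{eq:locus} in the same theorem: the idempotent is characterized by $\Supp_SM=\{\fq\in\Spec S\mid \fq\not\ni a\}$. Since $\Supp_SM$ is intrinsic to $M$, and since by \ref{ch:idempotents} an idempotent $a\in S$ is uniquely determined by the set $\{\fq\in\Spec S\mid\fq\not\ni a\}$ (via the complete set of primitive orthogonal idempotents of $S$), uniqueness of $a$ follows. There is no genuine obstacle here; the argument is a direct specialization, and the entire work has been done upstream in Proposition \ref{prop:relative-semidualizing} and Theorem \ref{thm:rigidity}.
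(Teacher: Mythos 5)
Your proposal is correct and is exactly the paper's argument: the theorem is stated in the text as an immediate consequence of Proposition \ref{prop:relative-semidualizing} (finite G-dimension of $\sigma$ is equivalent to $D^\sigma$ being semidualizing) combined with Theorem \ref{thm:rigidity} applied to $(S,D^\sigma)$, with uniqueness of the idempotent coming from the support condition \eqref{eq:locus}. Nothing further is needed.
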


This theorem greatly strengthens some results of \cite{YZ2},
where rigidity is defined using a derived version of
Hochschild cohomology, due to Quillen: There is a functor
 \[
\Rhom{S\dtensor{K}S}S{-\dtensor K-}\col\dcat[S]\times\dcat[S]\to\dcat[S]\,,
 \]
see \cite[\S3]{AILN} for details of the construction, which has the following 
properties:

 \begin{ssubchunk}
   \label{quillenHH}
Quillen's \emph{derived Hochschild cohomology modules}, see \cite[\S3]{Qu}, are given by
  \[
\Ext n{S\dtensor{K}S}S{M\dtensor KN}=
\HH{-n}{\Rhom{S\dtensor{K}S}S{M\dtensor KN}}\,.
  \]
 \end{ssubchunk}

 \begin{ssubchunk}
   \label{flat-derived}
When $S$ is $K$-flat one can replace $S\dtensor{K}S$ with $S\otimes_KS$;
see \cite[Remark 3.4]{AILN}.
 \end{ssubchunk}

 \begin{ssubchunk}
   \label{reduction}
When $\fd\sigma$ is finite, for every complex $M$ in $\dcatb S$ with 
$\fd_KM<\infty$ and for every complex $N$ in $\dcat[S]$, by 
\cite[Theorem 4.1]{AILN} there exists an isomorphism
 \[
\Rhom{S\dtensor{K}S}S{M\dtensor KN}
\simeq\Rhom S{\Rhom SM{D^{\sigma}}}N
\quad\text{in}\quad \dcat\,.
 \]
   \end{ssubchunk}

Yekutieli and Zhang \cite[4.1]{YZ1} define $M$ in $\dcat$ to be \emph{rigid
relative to  $K$} if $M$ is in $\dcatb S$, satisfies $\fd_KM<\infty$, and 
admits a \emph{rigidifying isomorphism}
  \[
\mu\col M\xra{\,\simeq\,}\Rhom{S\dtensor{K}S}S{M\dtensor KM}
\quad\text{in}\quad \dcat\,.
  \]
By \ref{flat-derived}, when $K$ is a field, this coincides with the notion 
introduced by Van den Bergh \cite[8.1]{VdB}.  On the other hand,
\eqref{eq:rigid} and \ref{reduction}, applied with $N=M$, give:

   \begin{ssubchunk}
   \label{ch:YZrigidity}
When $\fd\sigma$ is finite, $M$ in $\dcatb S$ is rigid relative to $K$ if 
and only if $\fd_KM$ is finite and $M$ is $D^\sigma$-rigid.
  \end{ssubchunk}

{From} Theorems \ref{thm:relativerigidity} and \ref{thm:gorring} we now 
obtain:
 
\begin{ssubtheorem}
\label{thm:relativerigidityFD}
Assume that $K$ is Gorenstein and $\fd\sigma$ is finite.

The complex $D^\sigma$ then is pointwise dualizing for $S$ and is rigid relative to $K$.

A complex $M$ in $\dcatb S$ is rigid relative to $K$ if and only if $D^{\sigma}_a\cong M$ holds for some idempotent $a$ in $S$. More precisely, when $\delta$ and $\mu$ are rigidifying isomorphisms  for $D^\sigma$ and $M$, respectively, there exists a commutative diagram
 \[
\xymatrixrowsep{2.5pc}
\xymatrixcolsep{4.5pc}
\xymatrix{ 
D^\sigma_a
  \ar@{->}[r]^-{\delta_a}_-{\simeq}
  \ar@{->}[d]^-{\simeq}_{\alpha}
  &\, \Rhom{S\dtensor{K}S}S{D^\sigma_a\dtensor KD^\sigma_a}
\ar@{->}[d]_-{\simeq}^{\Rhom{S\dtensor KS}S{\alpha\dtensor K\alpha}}
  \\
M\ar@{->}[r]_-{\mu}^-{\simeq}
&\,\Rhom{S\dtensor{K}S}S{M\dtensor KM}
}
 \]
where both the idempotent $a$ and the isomorphism $\alpha$ are 
uniquely defined.
 \qed
  \end{ssubtheorem}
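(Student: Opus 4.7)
My plan is to reduce everything to intrinsic $D^\sigma$-rigidity in the sense of Section \ref{Rigidity} via \ref{ch:YZrigidity}, and then apply Theorems \ref{thm:relativerigidity} and \ref{thm:uniquerigidity}. First, to establish the two asserted properties of $D^\sigma$: since $K$ is Gorenstein, $K_{\fq\cap K}$ is Gorenstein for every $\fq\in\Spec S$, so Theorem \ref{thm:gorring} immediately gives that $D^\sigma$ is strongly pointwise dualizing. Corollary \ref{cor:relative-semidualizing}(b) yields $\gdim\sigma<\infty$, whence $D^\sigma$ is semidualizing by Proposition \ref{prop:relative-semidualizing}, and Example \ref{exa:rigidity} with $a=1$ endows $D^\sigma$ with a canonical $D^\sigma$-rigidifying isomorphism. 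To convert this into rigidity relative to $K$ through \ref{ch:YZrigidity} I still have to verify $\fd_KD^\sigma<\infty$. Fix a smoothing \eqref{eq:independence}: since $\fd\sigma<\infty$, \ref{fpd:smooth} makes $S$ perfect in $\dcat[P]$; then $D^\sigma$ is perfect in $\dcat[P]$ by Theorem \ref{thm:perfect} and \ref{ch:perfect-tensor} (using that $\Omega^d_{P|K}$ is invertible), and the $K$-flatness of $P$ then forces $\fd_KD^\sigma<\infty$.

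Next I will handle the characterization of relatively rigid complexes. Let $M\in\dcatb S$. If $M$ is rigid relative to $K$, then \ref{ch:YZrigidity} says $M$ is $D^\sigma$-rigid and $\fd_KM<\infty$; Theorem \ref{thm:relativerigidity} then yields a unique idempotent $a\in S$ with $M\simeq D^\sigma_a$, determined by $\Supp_SM=\{\fq\mid\fq\not\ni a\}$. Conversely, each $D^\sigma_a$ is $D^\sigma$-rigid by Example \ref{exa:rigidity} and, being a direct summand of $D^\sigma$, satisfies $\fd_KD^\sigma_a<\infty$; a second appeal to \ref{ch:YZrigidity} makes it rigid relative to $K$.

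For the commutative diagram and the uniqueness of $\alpha$, the strategy is to transport everything along the natural isomorphism from \ref{reduction},
\[
\Phi_N\col\Rhom{S\dtensor{K}S}{S}{N\dtensor{K}N}\xra{\,\simeq\,}\Rhom{S}{\Rhom{S}{N}{D^\sigma}}{N},
\]
available for both $N=D^\sigma_a$ and $N=M$ because both complexes have finite flat dimension over $K$. Applying $\Phi$ to $\delta$ and $\mu$ turns the two YZ-rigid pairs into $D^\sigma$-rigid pairs in the sense of Section \ref{Rigidity}; Theorem \ref{thm:uniquerigidity}, combined with the isomorphism $D^\sigma_a\simeq M$ obtained above, delivers a unique isomorphism $\alpha\col D^\sigma_a\xra{\simeq}M$ of $D^\sigma$-rigid pairs, and pulling the resulting commutative square back through $\Phi$ gives the displayed diagram with both $a$ and $\alpha$ uniquely determined. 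The step I expect to be the main obstacle is confirming that $\Phi_N$ is natural in $N$ compatibly with $\Rhom{S\dtensor{K}S}{S}{\alpha\dtensor{K}\alpha}$; this naturality—extracted by inspecting the construction in \cite[Theorem 4.1]{AILN}—is precisely what lets commutativity in the $D^\sigma$-rigid world descend to the YZ-rigid world.
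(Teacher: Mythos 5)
Your proposal is correct and follows essentially the same route the paper intends: Theorem \ref{thm:gorring} for the pointwise dualizing property, Corollary \ref{cor:relative-semidualizing} and Theorem \ref{thm:relativerigidity} for the classification by idempotents, and \ref{ch:YZrigidity} to translate between relative rigidity and $D^\sigma$-rigidity, with Theorem \ref{thm:uniquerigidity} supplying the unique isomorphism of rigid pairs. Your explicit check that $\fd_K D^\sigma<\infty$ (via perfection over $P$ and \ref{fpd:smooth}) is a detail the paper leaves implicit but is exactly the right way to fill it in.
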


In \cite{YZ2} the ring $K$ is assumed regular of finite Krull dimension. This implies $\fd_KM<\infty$ for all $M\in\dcatb S$, so $\fd\sigma<\infty$ holds, and also that $S$ is of finite Krull dimension, since it is essentially of finite type over $K$. Therefore \cite[1.1(a), alias 3.6(a)]{YZ2} and \cite[1.2, alias 3.10]{YZ2} are special cases of Theorem \ref{thm:relativerigidityFD}.  

There also is a converse, stemming from \ref{dualizing_classical} and Theorem \ref{thm:gorring}.

Finally, we address a series of comments made at the end of
\cite[\S3]{YZ2}; they are given in quotation marks, but notation and 
references are changed to match ours.

\begin{ssubnotes}
     \label{rem:YZsurprise}
The paragraph preceding \cite[3.10]{YZ2} reads: ``Next comes a 
surprising result that basically says `all rigid complexes are dualizing'.  
The significance of this result is yet unknown.''  It states:  If $K$ and 
$S$ are regular, $\dim S$ is finite, and $S$ has no idempotents other
that $0$ and $1$, then a rigid complex is either zero or dualizing. 

Theorem \ref{thm:rigidity} provides an explanation of this phenomenon:
 Under these conditions $S$ has finite global dimension, hence every 
semidualizing complex is dualizing.
 \end{ssubnotes}

\begin{ssubnotes}
     \label{rem:YZquotes}
Concerning \cite[3.14]{YZ2}:
``The standing assumptions that the base ring $K$ has finite global 
dimension seems superfluous.''  See Theorem 
\ref{thm:relativerigidityFD}.  

``However, it seems necessary for $K$ to be Gorenstein---see 
\cite[Example 3.16]{YZ2}.''  Compare Theorems \ref{thm:relativerigidity}
and \ref{thm:relativerigidityFD}.

``A similar reservation applies to the assumption that $S$ is regular in 
Theorem 3.10 (Note the mistake in \cite[Theorem 0.6]{YZ1}: there too 
$S$ has to be regular).''  Theorem \ref{thm:relativerigidityFD} shows 
that the regularity hypothesis can be weakened significantly.
 \end{ssubnotes}

\subsection{Quasi-Gorenstein homomorphisms}
\label{Quasi-Gorenstein homomorphisms}

The map $\sigma$ is said to be \emph{quasi-Goren\-stein} if in \ref{eq:reduce1} 
for each $\fn\in\Max S$ the $Q_{\fn\cap Q}$-module $S_\fn$ has finite 
G-dimension and satisfies $\Rhom{Q_{\fn\cap Q}}{S_\fn}{Q_{\fn\cap S}}
\simeq\Shift^{r(\fn)}S_\fn$ for some $r(\fn)\in\BZ$; see \cite[5.4, 6.7, 7.8, 8.4]{AF:qG};
when this holds  $\sigma$ has finite $G$-dimension by Corollary~\ref{cor:localGdim}. 

By part (i) of the next theorem, quasi-Gorensteinness is a property of $\sigma$,
not of the factorization.  The equivalence of (ii) and (iii) also follows from
\cite[2.2]{AI:mm}.

\begin{ssubtheorem}
\label{thm:qgorenstein}
The following conditions are equivalent:
\begin{enumerate}[\quad\rm(i)]
  \item
$D^{\sigma}$ is invertible in $\dcat[S]$.
  \item[\rm(i$'$)]
$D^{\sigma}$ is derived $S$-reflexive in $\dcat[S]$
and $\gdim\sigma<\infty$.
  \item
$\sigma$ is quasi-Gorenstein.
  \item
$\Ext{}PSP$ is an invertible graded $S$-module.
   \end{enumerate}
\end{ssubtheorem}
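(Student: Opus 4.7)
The plan is to establish the chain (i) $\iff$ (i$'$) $\iff$ (iv) $\iff$ (iii). The first equivalence is immediate: Corollary~\ref{cor:reflsemid} characterizes invertibility in $\dcat[S]$ as \emph{semidualizing plus derived $S$-reflexive}, while Proposition~\ref{prop:relative-semidualizing} identifies semidualizingness of $D^\sigma$ with $\gdim\sigma<\infty$.

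For (i)~$\iff$~(iv), I would fix an essential smoothing $K \xra{\dot\sigma} P \xra{\sigma'} S$ of relative dimension $d$ and apply \ref{eq:basechange} with $M = S$ to obtain
\[
D^\sigma \simeq \Rhom{P}{S}{P}\otimes_P \Shift^d \Omega^d_{P|K} \simeq \Rhom{P}{S}{P}\otimes_S \Shift^d L
\quad\text{in } \dcat[S],
\]
where $L := S\otimes_P\Omega^d_{P|K}$ is invertible over $S$ because $\Omega^d_{P|K}$ is invertible over $P$ (and projective, so no derived tensor is needed). Tensoring with the invertible complex $\Shift^d L$ is an autoequivalence of $\dcat[S]$ that preserves and reflects invertibility (combine Lemma~\ref{refl:operations2} with perfection via Corollary~\ref{cor:perfect-tensor}, or equivalently invoke the derived Picard group action from~\ref{picard}), so $D^\sigma$ is invertible if and only if $\Rhom{P}{S}{P}$ is. By Proposition~\ref{prop:semid}, (i)~$\iff$~(i$'$), the latter amounts to $\Ext{}{P}{S}{P}=\hh{\Rhom{P}{S}{P}}$ being an invertible graded $S$-module, which is (iv).

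For (iv)~$\iff$~(iii), I specialize to the factorization $K\to Q\to S$ of \ref{eq:reduce1}, where $Q=W^{-1}K[x_1,\dots,x_e]$ makes $\Omega^e_{Q|K}$ free of rank~$1$ and the previous discussion simplifies (up to a non-canonical twist by a rank-one free module) to $D^\sigma\simeq\Shift^e\Rhom{Q}{S}{Q}$. Invertibility of the graded $S$-module $\Ext{}{Q}{S}{Q}$ can be tested by localizing at each $\fn\in\Max S$; since $Q\to S$ is finite, this agrees with localization in $Q$ at $\fn\cap Q$. Over the local ring $S_\fn$, a graded invertible module must be of the form $\Shift^{r(\fn)}S_\fn$ for a unique $r(\fn)\in\BZ$, and \ref{ch:projhom} promotes this to
\[
\Rhom{Q_{\fn\cap Q}}{S_\fn}{Q_{\fn\cap Q}} \simeq \Shift^{r(\fn)}S_\fn
\quad\text{in } \dcat[S_\fn]\,,
\]
which is exactly the quasi-Gorenstein condition at $\fn$ (finite G-dimension at $\fn$ follows because the Rhom is then a shift of a perfect $S_\fn$-module).

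The main obstacle is the converse direction (iii)~$\implies$~(iv), where local invertibility at each maximal ideal must be promoted to global graded invertibility of $H:=\Ext{}{Q}{S}{Q}$. Here I would argue that finite G-dimension from (iii) forces $\Rhom{Q}{S}{Q}\in\dcatb{Q}$, so $H$ is bounded and finite as a graded $S$-module; the hypothesis $H_\fn\cong\Shift^{r(\fn)}S_\fn$ at each $\fn\in\Max S$ then makes the underlying $S$-module $\bigoplus_i H_i$ locally free of rank~$1$ at every maximal ideal, hence finite projective of rank~$1$. Finally, the function $\fn\mapsto r(\fn)$ is locally constant on $\Spec S$ (since $H$ is bounded and each homogeneous piece $H_i$ is projective on an open set), and the idempotent calculus of~\ref{ch:idempotents} decomposes $\Spec S$ into finitely many clopen pieces on each of which $H$ is concentrated in a single degree, yielding graded invertibility of $H$ as required.
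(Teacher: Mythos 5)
Your overall architecture is sound and, for the core equivalences, coincides with the paper's: (i)$\iff$(i$'$) via Proposition~\ref{prop:relative-semidualizing} and Corollary~\ref{cor:reflsemid}, and (i)$\iff$``$\Ext{}PSP$ invertible'' via \ref{eq:basechange} and Proposition~\ref{prop:semid} (the paper labels the last two conditions (ii) and (iii), not (iii) and (iv); your numbering is off by one). Where you diverge is in tying in quasi-Gorensteinness: the paper proves (i$'$)\,\&\,(Ext invertible)$\implies$(quasi-Gorenstein) by drawing the finite G-dimension part from (i$'$) and the local shift isomorphisms from Proposition~\ref{prop:semid}, and then gets the converse by citing Proposition~\ref{prop:semid} again (which silently packages exactly the local-to-global projectivity and locally-constant-degree argument you spell out by hand, together with Corollary~\ref{cor:localGdim} to get boundedness). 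Your version, working directly with the $Q$-factorization of \eqref{eq:reduce1}, has the merit of matching the literal definition of quasi-Gorenstein, whereas the paper implicitly invokes independence of the factorization; both are acceptable.

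The one genuine flaw is your justification of the finite G-dimension clause in the direction (Ext invertible)$\implies$(quasi-Gorenstein): you assert that ``finite G-dimension at $\fn$ follows because the Rhom is then a shift of a perfect $S_\fn$-module.'' That $\Shift^{r(\fn)}S_\fn$ is perfect \emph{over $S_\fn$} is irrelevant; G-dimension here is measured over $Q_{\fn\cap Q}$, and a shift of $S_\fn$ is not a perfect $Q_{\fn\cap Q}$-complex in general. The claim itself is true, but for a different reason: from $\Rhom{Q_{\fn\cap Q}}{S_\fn}{Q_{\fn\cap Q}}\simeq\Shift^{r(\fn)}S_\fn$ one gets
\[
\Rhom{Q_{\fn\cap Q}}{\Rhom{Q_{\fn\cap Q}}{S_\fn}{Q_{\fn\cap Q}}}{Q_{\fn\cap Q}}
\simeq\Shift^{-r(\fn)}\Rhom{Q_{\fn\cap Q}}{S_\fn}{Q_{\fn\cap Q}}\simeq S_\fn\,,
\]
so Proposition~\ref{prop:foxby} yields that $\delta^{Q_{\fn\cap Q}}_{S_\fn}$ is an isomorphism, i.e.\ $S_\fn$ is derived $Q_{\fn\cap Q}$-reflexive, which by \ref{gd=reflexive} is the required finiteness of G-dimension. (Alternatively, route through (i$'$), already known equivalent to your hypothesis, and use Proposition~\ref{prop:relative-semidualizing}(iii), as the paper does.) With that substitution your argument is complete.
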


\begin{proof}
(i)$\iff$(i$'$). This results from Proposition~\ref{prop:relative-semidualizing} and Corollary~\ref{cor:reflsemid}.

(i)$\iff$(iii).
By \ref{eq:basechange}, one has $D^{\sigma}\simeq\Shift^d\Rhom PSP\dtensor P{\Omega^d_{P|K}}$ in $\dcat$.  It implies that $D^\sigma$ is invertible in $\dcat$ 
if and only if $\Rhom PSP$ is.  By Proposition~\ref{prop:semid}, the latter condition 
holds if and only if $\Ext{}PSP$ is invertible.

(i$'$) \& (iii)$\implies$(ii).  Indeed, for every $\fn\in\Spec S$ the 
finiteness of $\gdim\sigma$ implies that of $\gdim_{P_{\fn\cap P}}{S_\fn}$,
and the invertibility of $\Ext{}PSP$ implies an isomorphism
$\Rhom{P_{\fn\cap P}\!}{S_\fn}{P_{\fn\cap P}}\simeq\Shift^{r(\fn)}S_\fn$
for some $r(\fn)\in\BZ$, see Proposition~\ref{prop:semid}.

(ii)$\implies$(iii).  This follows from Proposition~\ref{prop:semid}.
 \end{proof}

A quasi-Gorenstein homomorphism $\sigma$ with $\fd_KS<\infty$ is said to be \emph{Gorenstein}, see \cite[8.1]{AF:qG}.  When $\sigma$ is flat, it is Gorenstein if and only if for every $\fq\in\Spec S$ and $\fp=\fq\cap K$
the ring $(K_{\fp}/{\fp}K_{\fp})\otimes_K S$ is Gorenstein; see~\cite[8.3]{AF:qG}.
The next result uses derived Hochschild cohomology; see \ref{quillenHH}.
For flat $\sigma$ it is proved in \cite[2.4]{AI:mm}.

 \begin{ssubtheorem}
   \label{thm:gorenstein}
The map $\sigma$ is Gorenstein if and only if $\fd\sigma$ is finite and the graded $S$-module $\Ext{}{S\dtensor KS}S{S\dtensor KS}$ is invertible.  When $\sigma$ is Gorenstein one has
 \[
D^{\sigma}\simeq\Ext{}{S\dtensor KS}S{S\dtensor KS}^{-1}\quad\text{in}\quad \dcat[S]\,,
 \]
and one can replace $S\dtensor KS$ with $S\otimes_KS$ in case $\sigma$ is flat.
 \end{ssubtheorem}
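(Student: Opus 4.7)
The plan is to derive both directions from a single specialization of \ref{reduction}. Setting $M=N=S$---legitimate because $\fd\sigma<\infty$ gives $\fd_KS<\infty$ and $S$ is in $\dcatb S$---that result yields
\[
\Rhom{S\dtensor KS}S{S\dtensor KS} \simeq \Rhom S{\Rhom SS{D^\sigma}}S \simeq \Rhom S{D^\sigma}S\quad\text{in}\quad \dcat[S]\,.
\]
Proposition \ref{prop:semid} supplies the two needed characterizations of invertibility of a complex $L$ in $\dcatb S$: (i)$\iff$(i$'$) says $L$ is invertible iff $\hh L$ is an invertible graded $S$-module, and (i)$\iff$(ii) says $L$ is invertible iff $\Rhom SLS$ is. Applied to the two sides of the displayed isomorphism, these translate the hypothesis on $\Ext{}{S\dtensor KS}S{S\dtensor KS}$ into invertibility of $D^\sigma$, which by Theorem \ref{thm:qgorenstein} is exactly quasi-Gorensteinness.

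For the forward direction, if $\sigma$ is Gorenstein, then $\sigma$ is quasi-Gorenstein with $\fd\sigma<\infty$; Theorem \ref{thm:qgorenstein} gives that $D^\sigma$ is invertible, and \ref{picard} identifies $\Rhom S{D^\sigma}S$ with $(D^\sigma)^{-1}$, so the displayed isomorphism forces the Hochschild complex to be invertible. Proposition \ref{prop:semid}(i)$\iff$(i$'$) then makes its homology an invertible graded $S$-module. Inverting the displayed isomorphism in $\dpic S$ and using \ref{ch:projhom} to identify the invertible complex $\Rhom{S\dtensor KS}S{S\dtensor KS}^{-1}$ with its homology yields the stated formula for $D^\sigma$. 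For the converse, assume $\fd\sigma<\infty$ and that $\Ext{}{S\dtensor KS}S{S\dtensor KS}$ is invertible. By Corollary \ref{cor:relative-semidualizing}(b) one has $\gdim\sigma<\infty$, so Proposition \ref{prop:relative-semidualizing} gives that $D^\sigma$ is semidualizing, in particular bounded. Then Proposition \ref{prop:semid}(i$'$)$\iff$(i) forces the Hochschild complex to be invertible, the displayed isomorphism makes $\Rhom S{D^\sigma}S$ invertible, and Proposition \ref{prop:semid}(ii)$\iff$(i) forces $D^\sigma$ itself to be invertible; Theorem \ref{thm:qgorenstein} together with $\fd\sigma<\infty$ yields that $\sigma$ is Gorenstein. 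The final assertion that $S\dtensor KS$ may be replaced by $S\otimes_KS$ when $\sigma$ is flat is immediate from \ref{flat-derived}.

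The whole proof is assembly: the essential bridge between two-variable derived Hochschild cohomology and the one-variable expression $\Rhom S{D^\sigma}S$ has already been built in \ref{reduction}, and that is the one genuinely deep input. The only subtle point internal to this proof is ensuring that $D^\sigma$ lies in $\dcatb S$ so that Proposition \ref{prop:semid} applies to it in the converse direction; this is handled by Corollary \ref{cor:relative-semidualizing}(b) followed by Proposition \ref{prop:relative-semidualizing}, and is where one most obviously uses the hypothesis $\fd\sigma<\infty$ beyond its role in invoking \ref{reduction}.
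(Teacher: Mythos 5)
Your argument is correct and follows essentially the same route as the paper: the single application of \ref{reduction} with $M=N=S$ to identify $\Rhom{S\dtensor KS}S{S\dtensor KS}$ with $\Rhom S{D^\sigma}S$, followed by the chain of equivalences through Proposition \ref{prop:semid} and Theorem \ref{thm:qgorenstein}, and \ref{ch:projhom} plus \ref{flat-derived} for the closing assertions. Your extra care in verifying that $D^\sigma$ lies in $\dcatb S$ (via Corollary \ref{cor:relative-semidualizing}(b) and Proposition \ref{prop:relative-semidualizing}) before invoking Proposition \ref{prop:semid} is a point the paper leaves implicit, and is a legitimate refinement rather than a deviation.
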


 \begin{proof}
We may assume that $\fd\sigma$ is finite.  One then gets an isomorphism
 \begin{equation}
   \label{eq:specialization}
\Rhom S{D^{\sigma}}S\simeq
\Rhom{S\dtensor{K}S}S{S\dtensor KS}
\quad\text{in}\quad \dcat[S]\tag{\ref{thm:gorenstein}.1}
\end{equation}
from \ref{reduction} with $M=S=N$.  The following equivalences 
then hold:
 \begin{xxalignat}{2}
\sigma \text{ is Gorenstein} 
&\iff
D^{\sigma}\text{ is invertible}
&&[\text{by Theorem \ref{thm:qgorenstein}}]
  \\
&\iff
\Rhom S{D^{\sigma}}S\text{ is invertible}
&&[\text{by Proposition \ref{prop:semid}}]
  \\
&\iff
\Rhom{S\dtensor{K}S}S{S\dtensor KS}\text{ is invertible}
&&[\text{by \eqref{eq:specialization}}]
  \\
&\iff
\Ext{}{S\dtensor KS}S{S\dtensor KS} \text{ is invertible}
&&[\text{by Proposition \ref{prop:semid}}]
 \end{xxalignat}
When $D^\sigma$ is invertible, \eqref{eq:specialization} and
\ref{ch:projhom} yield isomorphisms
 \[
(D^{\sigma})^{-1}\simeq
\Rhom{S\dtensor{K}S}S{S\dtensor KS}
\simeq
\Ext{}{S\dtensor KS}S{S\dtensor KS}
\quad\text{in}\quad \dcat[S]\,,
\]
whence the desired expression for $D^\sigma$. The last assertion comes from \ref{flat-derived}.
 \end{proof}

Combining Theorem \ref{thm:gorenstein}, Proposition~\ref{prop:naturality2}, 
and the isomorphism in \ref{eq:basechange}, we see that $D^\sigma$ can be 
computed from factorizations through arbitrary Gorenstein 
homomorphisms---not just through essentially smooth ones, as provided 
by \ref{ch:independence}.  

\begin{ssubcorollary}\pushQED{\qed}
 \label{cor:gormaps}
If $K\xra{\varkappa}Q\xra{\varkappa'}S$ is a factorization of $\sigma$ with $\varkappa$ Gorenstein and $\varkappa\>'$ finite, then there is an isomorphism
 \[
D^\sigma\simeq\Rhom QSQ\otimes_Q\Ext{}{Q\dtensor KQ}Q{Q\dtensor KQ}^{-1}
\quad\text{in}\quad \dcat[S]\,.
   \qedhere
 \]
 \end{ssubcorollary}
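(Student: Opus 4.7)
The plan is to chain together three results already in hand. First, I would apply Proposition~\ref{prop:naturality2} to the factorization $\sigma = \varkappa'\circ\varkappa$ (with the finite map playing the role of $\vf$) to get
\[
D^\sigma \simeq \Rhom QS{D^\varkappa} \quad\text{in}\quad \dcat[S].
\]
This reduces the problem to computing $D^\varkappa$, which is exactly what Theorem~\ref{thm:gorenstein} does: since $\varkappa\col K\to Q$ is Gorenstein, one has $\fd\varkappa<\infty$ and
\[
D^\varkappa \simeq \Ext{}{Q\dtensor KQ}Q{Q\dtensor KQ}^{-1} \quad\text{in}\quad \dcat[Q].
\]

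Substituting, I get $D^\sigma \simeq \Rhom QS{L^{-1}}$, where $L = \Ext{}{Q\dtensor KQ}Q{Q\dtensor KQ}$ is an invertible graded $Q$-module by Theorem~\ref{thm:gorenstein}. The remaining step is to move the invertible factor outside the $\Rhom$, that is, to exhibit a canonical isomorphism
\[
\Rhom QS{L^{-1}} \simeq \Rhom QSQ \otimes_Q L^{-1}
\]
in $\dcat[S]$. This is a standard tensor-evaluation style identity: the canonical morphism $\Rhom QSQ \dtensor_Q L^{-1}\to\Rhom QS{L^{-1}}$ (which is a special case of \eqref{eq:tensor_evaluation} with $M=Q$, or equivalently the assertion that the evaluation morphism \eqref{eq:evaluation} in Proposition~\ref{prop:semid}(vi) is an isomorphism for invertible $L^{-1}$) is an isomorphism precisely because $L^{-1}$ is invertible. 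Alternatively, one could simply verify the isomorphism locally on $\Spec Q$, where $L^{-1}$ becomes a shift of $Q$, in the same spirit as the derivation of \ref{eq:basechange}.

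Concatenating the three displayed isomorphisms yields the claimed formula. There is no real obstacle here; the only point to handle carefully is the invocation of tensor-evaluation for invertible complexes, but this is already encapsulated in Proposition~\ref{prop:semid} and used in the same way in \ref{eq:basechange}, so it requires no new work.
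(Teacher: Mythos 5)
Your proposal is correct and follows essentially the same route as the paper, which proves the corollary precisely by combining Proposition~\ref{prop:naturality2}, Theorem~\ref{thm:gorenstein}, and the isomorphism in \ref{eq:basechange} (the step that pulls the invertible module $L^{-1}$ out of the $\Rhom$, justified exactly as you do, by invertibility/local verification). The only cosmetic quibble is that the map you want is $\Rhom QSQ\dtensor QL^{-1}\to\Rhom QS{Q\dtensor QL^{-1}}$ rather than literally \eqref{eq:tensor_evaluation} or \eqref{eq:evaluation}, but your fallback argument by localizing on $\Spec Q$ disposes of this correctly.
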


\appendix

\section{Homological invariants}
\label{sec:pbseries}

Let $R$ be a commutative noetherian ring. 

Complexes of $R$-modules have differentials of degree $-1$.  Modules are identified with complexes 
concentrated in degree zero. For every graded $R$-module $H$ we set
  \[
\inf H=\inf\{n\in\BZ\mid H_n\ne0\}
  \quad\text{and}\quad
\sup H=\sup\{n\in\BZ\mid H_n\ne0\}\,.
  \]
The \emph{amplitude} of $H$ is the number $\amp H= \sup H-\inf H$. Thus $H=0$ is equivalent to $\inf H=\infty$; to $ \sup H=-\infty$; to $\amp H=-\infty$, and also to $\amp H<0$.

We write $\dcat[R]$ for the derived category of $R$-modules, and $\Shift$ for its translation functor.  Various full subcategories of $\dcat[R]$ are used in this text. Our notation for them is mostly standard:  the objects of $\dcatP R$ are the complexes $M$ with $\inf\hh M>-\infty$, those of $\dcatN R$ are the complexes 
$M$ with $\sup\hh M<\infty$, and $\dcatB R=\dcatP R\cap\dcatN R$.  Also,  $\dcatfg R$ is the category of complexes $M$ with $\HH nM$ finite for each $n\in\BZ$, and we set $\dcatp R=\dcatfg R\cap\dcatP R$, etc.  

For complexes $M$ and $N$ in $\dcat[R]$ we write $M\dtensor RN$ for the derived tensor product, $\Rhom RMN$ for the derived complex of homomorphisms, and set
  \[
\Tor nRMN=\HH{n}{M\dtensor RN}
  \quad\text{and}\quad
\Ext nRMN=\HH{-n}{\Rhom RMN}\,.
  \]

Standard spectral sequence arguments give the following well known
assertions:

\begin{chunk}
\label{derived-functors}
For all complexes $M$ and $N$ in $\dcat[R]$ there are inequalities
  \begin{align*}
\sup{\hh{\Rhom RMN}}&\le\sup{\hh N}-\inf\hh{M}\,.
  \\
\inf{\hh{M\dtensor RN}}&\ge\inf{\hh M}+\inf\hh{N}\,.
  \end{align*}

If $M$ is in $\dcatp R$ and $N$ is in $\dcatn R$, then $\Rhom RMN$ 
is in $\dcatn R$.

If $M$ and $N$ are in $\dcatp R$, then so is $M\dtensor RN$.
  \end{chunk}

For  ease of reference, we list some canonical isomorphisms:

  \begin{chunk}
 \label{localization}
Let $\fm$ be a maximal ideal of $R$ and set $k=R/\fm$.   For all complexes $M$ in $\dcat[R]$ and $N$ in $\dcatn R$ there are isomorphisms of graded $k$-vector spaces
  \begin{gather*}
k\dtensor{R}{M}\cong (k\dtensor{R}{M})_\fm\cong k\dtensor{R}{M_\fm}
\cong k\dtensor{R_\fm}{M_\fm}\,;
  \\
\Rhom{R}kN\cong\Rhom{R}kN_\fm\cong\Rhom{R}k{N_\fm}
\cong\Rhom{R_\fm}k{N_\fm}\,.
  \end{gather*}
  \end{chunk}

We write $(R,\fm,k)$ \emph{is a local ring} to indicate that $R$ is a  
commutative noetherian ring with unique maximal ideal $\fm$ and 
with residue field $k=R/\fm$.

The statements below may be viewed as partial converses to those in 
\ref{derived-functors}.

  \begin{chunk}
 \label{finite:test}
Let $(R,\fm,k)$ be a local ring and $M$ a complex in $\dcatfg R$.

If $\Rhom RkM$ is in $\dcatN R$, then $M$ is in $\dcatN R$.

If $k\dtensor RM$ is in $\dcatP R$, then $M$ is in $\dcatP R$.
  
See \cite[2.5, 4.5]{FI} for the original proofs.  The proof of \cite[1.5]{AI:mm}
gives a shorter, simpler, argument for the second assertion; it can be 
adapted to cover the first one.
   \end{chunk}

Many arguments in the paper utilize invariants of local rings with values
in the ring $\BZ[\![t]\!][t^{-1}]$ of formal Laurent  series in $t$ with
integer coefficients.  The \emph{order} of such a series
$F(t)=\sum_{n\in\BZ}a_nt^n$ is the number
  \begin{align*}
\ord(F(t))&=\inf\{n\in\BZ\mid a_n\ne0\}\,.
  \end{align*}

To obtain the expressions for Poincar\'e series and Bass series in Lemmas
\ref{lem:poincare} and \ref{lem:bass} below, we combine ideas from Foxby's
proofs of \cite[4.1, 4.2]{Fx} with the results in \ref{finite:test};
this allows us to relax some boundedness conditions in \cite{Fx}.

  \begin{chunk}\textit{Poincar\'e series.}
    \label{Poincare series}
For a local ring $(R,\fm,k)$ and for $M$ in $\dcatp R$, in view of
\ref{derived-functors} the formula below defines a formal Laurent series,
called the \emph{Poincar\'e series} of $M$:
  \[
P^R_M(t)=\sum_{n\in\BZ}\rank _k\Tor nRkM\,t^n\,.
  \]

\begin{subchunk}
 \label{minimal}
When $(R,\fm,k)$ is a local ring, each complex $M\in\dcatp R$ admits a resolution $F\tiso M$ with $F\in\dcatp R$, such that  $\dd(F) \subseteq \fm F$ holds and each $F_n$ is free of finite rank; this forces $\inf F=\inf{\hh M}$.  Since $k\otimes_{R}F$ is a complex of $k$-vector spaces with zero differential, there are  
isomorphisms
  \[
k\dtensor RM\simeq k\otimes_{R}F\simeq\hh{k\otimes_{R}F}
  \quad\text{in}\quad \dcat[R]\,,
 \]
which imply equalities $\rank_k\Tor nRkM=\rank_RF_n$ for all $n\in\BZ$.
 \end{subchunk}

In \ref{supp:poincare} and Lemma~\ref{lem:poincare} below the ring $R$ is not assumed local.

\begin{subchunk}
\label{supp:poincare}
For $M$ in $\dcatp R$ and $\fp$ in $\Spec R$ the conditions $\fp\in 
\Supp M$
and $P^{R_\fp}_{M_\fp}(t)\ne0$ are equivalent; when they hold one has
$\ord(P^{R_\fp}_{M_\fp}(t))=\inf\hh{M_\fp}$.

Indeed, both assertions are immediate consequences of \ref{minimal}.
  \end{subchunk}

\begin{sublemma}
\label{lem:poincare}
Let $M$ and $N$ be complexes in $\dcatfg R$ and $\fp$ be a prime
ideal of $R$.

If $(M\dtensor RN)_\fp$ is in $\dcatP{R_\fp}$, then so are $M_{\fp}$
and $N_{\fp}$, and there are equalities
  \begin{align*}
P^{R_{\fp}}_{(M\dtensor RN)_\fp}(t)
&=P^{R_{\fp}}_{M_{\fp}}(t)\cdot P^{R_{\fp}}_{N_{\fp}}(t)\,,
\\
\inf \hh{(M\dtensor RN)_\fp}
&=\inf \hh{M_\fp}+\inf \hh{N_\fp}\,.
  \end{align*}
\end{sublemma}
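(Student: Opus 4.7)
The plan is to reduce to the local setting by replacing $R$, $M$, $N$ with $R_\fp$, $M_\fp$, $N_\fp$; derived tensor and residue-field functors commute with localization, cf.\ \ref{localization}. So I assume $R$ is local with maximal ideal $\fm$ and residue field $k$, with $M,N\in\dcatfg R$ and $M\dtensor R N\in\dcatP R$. The central tool is the K\"unneth-type isomorphism
\[
k\dtensor R(M\dtensor R N)\,\simeq\,(k\dtensor R M)\otimes_k(k\dtensor R N)\,,
\]
which is valid because one can freely commute $\dtensor R$ with $\otimes_k$ once a $k$-complex has been introduced in each factor, and because $\dtensor k$ coincides with $\otimes_k$ over a field.

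The first step is to establish $M,N\in\dcatp R$. Applying \ref{derived-functors} to the hypothesis shows the left-hand side of the isomorphism lies in $\dcatP R$. Passing to homology on the right, the graded $k$-vector space $\hh{k\dtensor R M}\otimes_k\hh{k\dtensor R N}$ is bounded below; since a tensor product of two graded $k$-vector spaces is bounded below exactly when some factor is zero or both are, each of $k\dtensor R M$, $k\dtensor R N$ lies in $\dcatP R$. (In the degenerate case where one of them is acyclic, \ref{finite:test} followed by \ref{minimal} forces $M=0$ or $N=0$, and the asserted equalities hold trivially under the usual conventions on $\inf\varnothing$.) Invoking \ref{finite:test} once more promotes this to $M,N\in\dcatp R$.

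The second step is a minimal-resolution computation. By \ref{minimal}, choose minimal free resolutions $F\tiso M$ and $G\tiso N$. Since $F$ and $G$ are complexes of flat $R$-modules, $F\otimes_R G\tiso M\dtensor R N$; this resolution is itself minimal because $\dd(F\otimes_R G)\subseteq\fm(F\otimes_R G)$. Therefore $k\otimes_R(F\otimes_R G)\cong(k\otimes_R F)\otimes_k(k\otimes_R G)$ has zero differential, and counting ranks in each degree yields $P^R_{M\dtensor R N}(t)=P^R_M(t)\cdot P^R_N(t)$. Finally, applying \ref{supp:poincare} to all three complexes converts this identity (by taking orders) into $\inf\hh{M\dtensor R N}=\inf\hh{M}+\inf\hh{N}$.

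The main obstacle is the first step: the K\"unneth identity itself is formal, but pulling bounded-belowness from $k\dtensor R M$ back to $M$ relies crucially on \ref{finite:test}, and the acyclic subcase must be dispatched carefully. Once $M,N$ are known to lie in $\dcatp R$, the remainder is routine rank arithmetic on minimal resolutions.
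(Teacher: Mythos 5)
Your proof is correct and takes essentially the same route as the paper's: reduce to the local case, use the K\"unneth isomorphism $k\dtensor R(M\dtensor RN)\simeq(k\dtensor RM)\dtensor k(k\dtensor RN)$ together with \ref{derived-functors} and \ref{finite:test} to conclude $M,N\in\dcatp R$, and then compare ranks degreewise and orders via \ref{supp:poincare}. The only cosmetic difference is that the paper reads off the product of Poincar\'e series directly from the K\"unneth formula on homology, whereas you route through the minimal resolution $F\otimes_RG$; the two computations are equivalent.
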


\begin{proof}
In $\dcat[R_{\fp}]$ one has $(M\dtensor RN)_{\fp}\simeq
M_{\fp}\dtensor{R_{\fp}}N_{\fp}$, so it suffices to treat the case
when $(R,\fp,k)$ is local.  Note the following isomorphisms of graded
vector spaces:
  \begin{align*}
\hh{k\dtensor R(M\dtensor RN)}
         &\cong\hh{(k\dtensor RM)\dtensor k (k\dtensor RN)}\\
          &\cong\hh{k\dtensor RM}\otimes_{k} \hh{k\dtensor RN}
  \end{align*}
The hypotheses and \ref{derived-functors} yield $\HH n{k\dtensor
R(M\dtensor RN)}=0$ for $n\ll0$, so the isomorphism implies that
$k\dtensor RM$ and $k\dtensor RN$ are in $\dcatP R$, and thus $M$ and $N$
are in $\dcatp R$ by \ref{finite:test}.  When they are, for each $n\in\BZ$
one has an isomorphism of $k$-vector spaces
  \begin{align*}
(\hh{k\dtensor RM}\otimes_k\hh{k\dtensor RN})_n
 &\cong\bigoplus_{i+j=n}\HH i{k\dtensor RM}\otimes_{k}\HH j{k\dtensor
 RN}\\
  &\cong\bigoplus_{i+j=n}\Tor iRkM\otimes_{k}\Tor jRkN\,.  \end{align*}
By equating the generating series for the ranks over $k$, we get the
desired equality of Poincar\'e series; comparing orders and using
\ref{supp:poincare} gives the second equality.
 \end{proof}
   \end{chunk}

  \begin{chunk}\textit{Bass series.}
For a local ring $(R,\fm,k)$ and for $N$ in $\dcatn R$, in view of
\ref{derived-functors} the following formula defines a formal Laurent
series, called the \emph{Bass series} of $N$:
  \[
I_R^N(t)=\sum_{n\in\BZ}\rank _k\Ext nRkN\,t^n\,.
  \]

\begin{subchunk}
  \label{Bass series}
For a local ring $R$ and $N$ in $\dcatn R$ one has
$\ord(I_R^N(t))=\depth_RN$; this follows from the definition of depth,
see Section~\ref{Depth}.
Furthermore, $I_{R}^{N}(t)$ is a Laurent polynomial if and only if $N$
has finite injective dimension; see, for example, \cite[5.5]{AF:hd}.
\end{subchunk}

In the remaining statements the ring $R$ is not necessarily local.

\begin{subchunk}
\label{supp:bass}
For $N$ in $\dcatn R$ and $\fp$ in $\Spec R$ the conditions 
$\fp\in \Supp N$ and $I_{R_\fp}^{N_\fp}(t)\ne0$ are equivalent; 
when they hold one has $\ord(I_{R_\fp}^{N_\fp}(t))=\depth_{R_\fp}\!N_\fp$.

Indeed, in view of \ref{Bass series} the assertions follow from the fact 
that $\depth_{R_\fp} N_\fp< \infty$ is equivalent to $\hh{N_\fp}\ne0$; 
see, for instance, \cite[2.5]{FI}.
  \end{subchunk}

\begin{sublemma}
\label{lem:bass}
Let $M$ and $N$ be complexes in $\dcatfg R$ and $\fp$ a prime ideal  
of $R$.

If $\Rhom RMN$ is in $\dcatN R$ then $M_{\fp}$ is in $\dcatp{R_{\fp}}$.

If, in addition, $\fp$ is the unique maximal ideal of $R$, or $\fp$  
is maximal and
$N$ is in $\dcatn R$, or $M$ is in $\dcatp R$ and $N$ is in $\dcatn R 
$, then
there are equalities
  \begin{align*}
I_{R_{\fp}}^{{\Rhom RMN}_{\fp}}(t)
&=P^{R_{\fp}}_{M_{\fp}}(t)\cdot I_{R_{\fp}}^{N_{\fp}}(t)\,,
\\
\depth_{R_\fp}\!({\Rhom RMN}_{\fp})
&=\inf(\hh{M_\fp})+\depth_{R_\fp}\!(N_\fp)\,.
  \end{align*}
   \end{sublemma}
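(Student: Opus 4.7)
The plan is to localize at $\fp$ and transport everything to the residue field $k=R_\fp/\fp R_\fp$ via the composite adjunction isomorphism
\begin{equation*}
\Rhom{R_\fp}{k}{\Rhom{R_\fp}{M_\fp}{N_\fp}}\simeq\Rhom{k}{k\dtensor{R_\fp}M_\fp}{\Rhom{R_\fp}{k}{N_\fp}}\,.
\end{equation*}
The first step is tensor-Hom adjunction; the second is the change-of-rings adjunction $\Rhom{R_\fp}{A}{N_\fp}\simeq\Rhom{k}{A}{\Rhom{R_\fp}{k}{N_\fp}}$, valid for any $R_\fp$-complex $A$ annihilated by $\fp R_\fp$, applied to $A=k\dtensor{R_\fp}M_\fp$.

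For the first assertion, after localizing the hypothesis reads $\Rhom{R_\fp}{M_\fp}{N_\fp}\in\dcatN{R_\fp}$. Since $k\in\dcatp{R_\fp}$, \ref{derived-functors} forces $\Rhom{R_\fp}{k}{\Rhom{R_\fp}{M_\fp}{N_\fp}}$ to be bounded above, and the displayed isomorphism transfers this to $\Rhom{k}{k\dtensor{R_\fp}M_\fp}{\Rhom{R_\fp}{k}{N_\fp}}$. Fixing any $j_0$ with $\Ext{j_0}{R_\fp}{k}{N_\fp}\ne 0$, the $\Hom{k}{\Tor{i}{R_\fp}{k}{M_\fp}}{\Ext{j_0}{R_\fp}{k}{N_\fp}}$-summand of the latter forces $\Tor{i}{R_\fp}{k}{M_\fp}=0$ for $i\ll 0$; thus $k\dtensor{R_\fp}M_\fp\in\dcatP{R_\fp}$ and \ref{finite:test} delivers $M_\fp\in\dcatp{R_\fp}$.

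For the series identities, each of the three additional hypotheses ensures $N_\fp\in\dcatn{R_\fp}$, so that $I^{N_\fp}_{R_\fp}(t)$ is a well-defined Laurent series; combined with the first assertion, $P^{R_\fp}_{M_\fp}(t)$ is one too. Both $k\dtensor{R_\fp}M_\fp$ and $\Rhom{R_\fp}{k}{N_\fp}$ are complexes of $k$-vector spaces, hence quasi-isomorphic to their shifted homology with zero differential; a direct computation of $\Rhom{k}{-}{-}$ between such complexes yields
\begin{equation*}
\Ext{m}{R_\fp}{k}{\Rhom{R}{M}{N}_\fp}\cong\bigoplus_{i+j=m}\Hom{k}{\Tor{i}{R_\fp}{k}{M_\fp}}{\Ext{j}{R_\fp}{k}{N_\fp}}\,,
\end{equation*}
with only finitely many nonzero summands for each $m$ under the hypotheses. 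Counting $k$-dimensions assembles into $I^{\Rhom{R}{M}{N}_\fp}_{R_\fp}(t)=P^{R_\fp}_{M_\fp}(t)\cdot I^{N_\fp}_{R_\fp}(t)$, whence the depth identity drops out by taking orders and invoking \ref{minimal} and \ref{Bass series}. The main technical point will be the book-keeping needed to convert products of $k$-dimensions into a bona fide Cauchy product of Laurent series; the three alternative hypotheses each secure the required finiteness by a slightly different route, though none presents a serious obstacle once the key adjunction has been set up and the first assertion established.
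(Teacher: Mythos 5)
Your overall strategy---reduce to the residue field, compute a K\"unneth-type formula over $k$, and read off the Laurent series---is the one the paper uses, but your very first step contains a genuine gap: you localize the hypothesis, asserting that $\Rhom RMN\in\dcatN R$ ``reads'' $\Rhom{R_\fp}{M_\fp}{N_\fp}\in\dcatN{R_\fp}$ after localization, and from then on you compute with $\Rhom{R_\fp}{M_\fp}{N_\fp}$ rather than with $(\Rhom RMN)_\fp$, which is the object actually appearing in the conclusion. The canonical map $(\Rhom RMN)_\fp\to\Rhom{R_\fp}{M_\fp}{N_\fp}$ is an isomorphism when $M\in\dcatp R$ and $N\in\dcatN R$ (a degreewise finite free resolution $F$ of $M$ is then bounded below, so $\Hom RFN$ involves only finite products in each degree), but for $M$ merely in $\dcatfg R$---which is all you have in the first assertion and in the case ``$\fp$ maximal and $N\in\dcatn R$''---such a resolution is unbounded below, each degree of $\Hom RFN$ is an infinite product, and localization does not commute with infinite products. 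So the first assertion (arbitrary prime, no boundedness on $M$) cannot be started this way, and the second of the three alternative hypotheses is not covered; only the local case and the case $M\in\dcatp R$, $N\in\dcatn R$ survive your opening move.

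The paper avoids this by reversing the order of operations: it applies $\Rhom Rk-$ and $k\dtensor R-$ over $R$ itself, with $k=R/\fm$ for a maximal ideal $\fm$, obtaining $\hh{\Rhom Rk{\Rhom RMN}}\cong\Hom k{\hh{k\dtensor RM}}{\hh{\Rhom RkN}}$ by adjunction over $R$. Since $k$ is concentrated at $\fm$, each of the three complexes occurring here does commute with localization at $\fm$ (this is \ref{localization}, which needs only a bounded-below finite free resolution of $k$ plus one-sided boundedness of the second argument), so the boundedness of $\Rhom RMN$ can be imported and the series computed without ever identifying $(\Rhom RMN)_\fm$ with $\Rhom{R_\fm}{M_\fm}{N_\fm}$; a non-maximal $\fp$ is then reached by first treating a maximal ideal containing it and localizing the \emph{conclusion} $M_\fm\in\dcatp{R_\fm}$. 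If you reorganize along these lines---residue field first, localization second---the remainder of your argument (the K\"unneth identification, the appeal to \ref{finite:test}, and the order count via \ref{minimal} and \ref{Bass series}) goes through essentially as you wrote it.
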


\begin{proof}
Assume first that $\fp$ is  maximal and set $k=R/\fp$.  One gets
isomorphisms
\begin{align*}
\hh{\Rhom Rk{\Rhom RMN}}
        & \cong \hh{\Rhom R{k\dtensor RM}N} \\
         &\cong \hh{\Rhom k{k\dtensor RM}{\Rhom RkN}} \\
         &\cong \Hom k{\hh{k\dtensor RM}}{\hh{\Rhom RkN}}
  \end{align*}
of graded $k$-vector spaces by using standard maps.  In view of \ref{derived-functors}, for $n\gg0$ one has  $\HH n{\Rhom Rk{\Rhom RMN}}=0$, so the isomorphisms yield $k\dtensor RM\in\dcatP R$ and $\Rhom RkN\in\dcatN R$.  When $R$ is local, one gets $M\in\dcatp R$ and $N\in\dcatn R$ from \ref{finite:test}.  For general $R$,
this implies $M_{\fp}\in\dcatp{R_{\fp}}$ in view of the isomorphism $k\dtensor RM \simeq k\dtensor{R_\fp}{M_\fp}$ from \ref{localization}. If $N$ is in $\dcatn R$, then by referring once more to \emph{loc.~cit.}
we can rewrite the isomorphisms above in each degree $n$ in the form
  \begin{align*}
\Ext{n}{R_\fp}k{\Rhom RMN_\fp}
&\cong\Hom k{\Tor{}{R_\fp}k{M_\fp}}{\Ext{}{R_\fp}k{N_\fp}}_{-n}\\
&\cong\bigoplus_{i-j=n}\Hom k{\Tor i{R_\fp}k{M_\fp}}{\Ext{-j}{R_\fp}k 
{N_\fp}}\,.
  \end{align*}
For the generating series for the ranks over $k$ these isomorphisms give
   \begin{align*}
I_{R_{\fp}}^{{\Rhom RMN}_{\fp}}(t)
        &=\bigg(\sum_{i\in\BZ}\rank_k \Tor i{R_\fp}k{M_\fp} t^i\bigg)
             \bigg(\sum_{j\in\BZ}\rank_k\Ext{j}{R_\fp}k{N_\fp}t^j 
\bigg)\\
      &=P^{R_{\fp}}_{M_{\fp}}(t)\cdot I_{R_{\fp}}^{N_{\fp}}(t)\,.
       \end{align*}
Equating orders of formal Laurent and using \ref{supp:bass} one gets the second equality.

Let now $\fp$ be an arbitrary prime ideal and $\fm$ a maximal
ideal containing $\fp$.  The preceding discussion shows that
$M_{\fm}$ is in $\dcatp{R_{\fm}}$, hence $M_{\fp}$ is in 
$\dcatp{R_{\fp}}$.  When $M$ is in $\dcatp R$ and $N$ is in $\dcatn R$ 
one has ${\Rhom RMN}_{\fp}\cong\Rhom{R_\fp}{M_\fp}{N_\fp}$, so the
desired equalities follow from those that have already been established.
 \end{proof}
    \end{chunk}

\begin{chunk}
\label{support}
The \emph{support} of a complex $M$ in $\dcatb R$ is the set
 \[
\Supp_RM=\{\fp\in\Spec R\mid \hh M_\fp\ne0 \}\,.
 \]
One has $\Supp_{R}M=\varnothing$ if and only if $\hh M=0$, if and only
if $M\simeq0$ in $\dcat[R]$.

For all complexes $M,N$ in $\dcatb R$ there are equalities
\[
\Supp_{R}(M\dtensor RN) = \Supp_{R}M\cap \Supp_{R}N =
\Supp_{R}\Rhom RMN\,.
\]
This follows directly from \ref{supp:poincare}, \ref{supp:bass},  and Lemmas~\ref{lem:poincare} and \ref{lem:bass}.
  \end{chunk}

\end{document}